\documentclass{article}
\usepackage{ifpdf}
\usepackage[utf8]{inputenc}
\usepackage{xcolor}
\usepackage{amsthm}
\usepackage{amsmath}
\usepackage{mathtools}
\usepackage{hyperref}
\usepackage{hyperref}
\hypersetup{
    colorlinks=true,
    linkcolor=blue,
    filecolor=magenta,      
    urlcolor=cyan,
}
\usepackage[export]{adjustbox}
\usepackage{braket,amsfonts}
\usepackage{amsmath}

\DeclareMathOperator*{\argmin}{arg\,min}
\newtheorem{lemma}{Lemma}
\newtheorem{remark}{Remark}
\newtheorem{corollary}{Corollary}

\usepackage{array}

\usepackage[caption=false]{subfig}



\usepackage{algorithm,algpseudocode}
\algnewcommand{\Inputs}[1]{%
  \State \textbf{Inputs:}
  \Statex \hspace*{\algorithmicindent}\parbox[t]{.8\linewidth}{\raggedright #1}
}
\algnewcommand{\Outputs}[1]{%
  \State \textbf{Outputs:}
  \Statex \hspace*{\algorithmicindent}\parbox[t]{.8\linewidth}{\raggedright #1}
}
\algnewcommand{\Initialize}[1]{%
  \State \textbf{Initialize:}
  \Statex \hspace*{\algorithmicindent}\parbox[t]{.8\linewidth}{\raggedright #1}
}
\usepackage{graphicx,epstopdf}


\usepackage{amsopn}

\usepackage{xspace}
\usepackage{bold-extra}
\usepackage[most]{tcolorbox}

\colorlet{texcscolor}{blue!50!black}
\colorlet{texemcolor}{red!70!black}
\colorlet{texpreamble}{red!70!black}
\colorlet{codebackground}{black!25!white!25}
\usepackage{bm}
\usepackage{acronym}

\acrodef{FTFC}[FTFC]{\emph{Fundamental Theorem of Fractional Calculus}}
\acrodef{DMD}[DMD]{\emph{Dynamic Mode Decomposition}}
\acrodef{SDMD}[S-DMD]{\emph{Symmetric DMD}}
\acrodef{SVD}[SVD]{\emph{Singular Vector Decomposition}}
\acrodef{TV}[TV]{\emph{Total Variation}}
\acrodef{DFT}[DFT]{\emph{Discrete Fourier Transform}}
\acrodef{CFT}[CFT]{\emph{Continuous Fourier Transform}}
\acrodef{POD}[POD]{\emph{Proper Orthogonal Decomposition}}
\acrodef{PDE}[PDE]{\emph{Partial Differential Equation}}
\acrodef{PDEs}[PDEs]{\emph{Partial Differential Equations}}
\acrodef{TV}[TV]{\emph{Total Variation}}
\acrodef{OrthoNS}[OrthoNS]{\emph{Orthogonal Nonlinear Spectral decomposition}}


\lstdefinestyle{siamlatex}{%
  style=tcblatex,
  texcsstyle=*\color{texcscolor},
  texcsstyle=[2]\color{texemcolor},
  keywordstyle=[2]\color{texemcolor},
  moretexcs={cref,Cref,maketitle,mathcal,text,headers,email,url},
}

\tcbset{%
  colframe=black!75!white!75,
  coltitle=white,
  colback=codebackground, 
  colbacklower=white, 
  fonttitle=\bfseries,
  arc=0pt,outer arc=0pt,
  top=1pt,bottom=1pt,left=1mm,right=1mm,middle=1mm,boxsep=1mm,
  leftrule=0.3mm,rightrule=0.3mm,toprule=0.3mm,bottomrule=0.3mm,
  listing options={style=siamlatex}
}

\newtcblisting[use counter=example]{example}[2][]{%
  title={Example~\thetcbcounter: #2},#1}

\newtcbinputlisting[use counter=example]{\examplefile}[3][]{%
  title={Example~\thetcbcounter: #2},listing file={#3},#1}

\DeclareTotalTCBox{\code}{ v O{} }
{ 
  fontupper=\ttfamily\color{black},
  nobeforeafter,
  tcbox raise base,
  colback=codebackground,colframe=white,
  top=0pt,bottom=0pt,left=0mm,right=0mm,
  leftrule=0pt,rightrule=0pt,toprule=0mm,bottomrule=0mm,
  boxsep=0.5mm,
  #2}{#1}

\patchcmd\newpage{\vfil}{}{}{}
\flushbottom



\providecommand{\keywords}[1]
{
  \small	
  \textbf{\textit{Keywords---}} #1
}
\newtheorem{theorem}{Theorem}

\newtheorem{definition}{Definition}

\title{Modes of Homogeneous Gradient Flows\thanks{Ido would like to thank Prof. Andrea Bertozzi for the opportunity of studying in mathematics science department, UCLA in general and for helpful conversations, related to this work, in particular. We thank Prof. Ronen Talmon for stimulating discussions. And we also would like to thank Shachar Praisler for his helpfull advice.
{\bf{Funding:}} This work was supported by the European Union’s Horizon 2020 research and innovation programme under the Marie Sk{\l}odowska-Curie grant agreement No. 777826 (NoMADS). GG acknowledges support by the Israel Science Foundation (Grant No.  534/19) and by the Ollendorff Minerva Center. }}

\author{Ido Cohen\thanks{Electrical Engineering Department at the Technion -- Israel Institute of Technology (\href{mailto:idoc@campus.technion.ac.il}{idoc@campus.technion.ac.il},\href{mailto:pavel@ee.technion.ac.il}{pavel@ee.technion.ac.il},\href{mailto:guy.gilboa@ee.technion.ac.il}{guy.gilboa@ee.technion.ac.il}).}
\and Omri Azencot\thanks{Department of Mathematics, University of California Los Angeles (\href{mailto:azencot@math.ucla.edu}{azencot@math.ucla.edu}).} 
\and Pavel Lifshits \footnotemark[2]
\and Guy Gilboa \footnotemark[2]}


\ifpdf
\hypersetup{pdftitle={Mode decomposition for Homogeneous Symmetric Operators}}
\fi



\usepackage{mathtools}
\DeclarePairedDelimiterX{\inp}[2]{\langle}{\rangle}{#1, #2} 
\usepackage{physics} 

\usepackage{amssymb} 
\usepackage{mathtools}
\usepackage{longtable}


\date{\today}
\begin{document}
\maketitle 

\begin{abstract}
Finding latent structures in data is drawing increasing attention in diverse fields such as image and signal processing, fluid dynamics, and machine learning. 
In this work we examine the problem of finding the main modes of gradient flows. Gradient descent is a fundamental process in optimization where its stochastic version is prominent in training of neural networks. 
Here our aim is to establish a consistent theory for gradient flows $\psi_t = P(\psi)$, where $P$ is a nonlinear homogeneous operator. Our proposed framework stems from analytic solutions of homogeneous flows, previously formalized by Cohen-Gilboa, where the initial condition $\psi_0$ admits the nonlinear eigenvalue problem $P(\psi_0)=\lambda \psi_0 $.
We first present an analytic solution for \ac{DMD} in such cases.
We show an inherent flaw of \ac{DMD}, which is unable to recover the essential dynamics of the flow. It is evident that \ac{DMD} is best suited for homogeneous flows of degree one. We propose an adaptive time sampling scheme and show its dynamics are analogue to homogeneous flows of degree one with a fixed step size. Moreover, we adapt \ac{DMD} to yield a real spectrum, using symmetric matrices.
Our analytic solution of the proposed scheme recovers the dynamics perfectly and yields zero error. We then proceed to show that in the general case the  orthogonal modes $\{ \phi_i \}$ are approximately nonlinear eigenfunctions $P(\phi_i) \approx\lambda_i \phi_i $.  We formulate Orthogonal Nonlinear Spectral decomposition (\emph{OrthoNS}), which recovers the essential latent structures of the gradient descent process. Definitions for spectrum and filtering are given, and a Parseval-type identity is shown. Experimental results on images, show the resemblance to direct computations of nonlinear sepctral decomposition. A significant speedup (by about two orders of magnitude) is achieved for this application using the proposed method.
\end{abstract}

\keywords{nonlinear decomposition, dynamic mode decomposition, homogeneous operators, gradient flows, nonlinear spectral theory.}

\section{Introduction}
Finding latent structures in data is a fundamental task in diverse fields. Some canonical examples are wavelets and dictionaries in image and signal processing \cite{ricker1953wavelet, elad2006image, gurevich2008finite}, dynamic modes in fluid dynamics analysis \cite{jovanovic2014sparsity, ohmichi2017preconditioned}, and dimensionality reduction and invariant representations in machine learning \cite{papyan2018theoretical, ng2002spectral, kaliroff2019self}. Understanding the latent structures allows to better model and to simplify the problem at hand, facilitating solutions for broad applications such as denoising, prediction, and classification \cite{shaham2018spectralnet}. These structures are formulated differently in different disciplines.  For example, in image processing, the structures can be formed via repetitive patches in different scales \cite{shaham2019singan}, while in signal processing they can be a sum of audio frequencies, or of nonlinear eigenfunctions \cite{burger2016spectral, biton2019optoacoustic}. In fluid dynamics the structures are represented as a sum of modes \cite{schmid2010dynamic}, and in machine learning they might be based on the recurrence of words \cite{kuang2017crime}. Despite this diversity, different techniques from different disciplines typically share similar fundamental principles.

Gradient descent flow is a central process in control \cite{agarwal2019online}
and in machine learning \cite{bottou2018optimization}, where it is common to solve optimization problems. Thus, analysing the gradient flow process draws attention in these areas and plays an important role, particularly when the cost function is non-convex 
\cite{osher2018laplacian, gradu2020non}, or when a model for a dynamical system is investigated \cite{arora2018towards}.

In this work, we propose a method to analyze latent structures of certain common gradient decent flows by using \acf{DMD}. \ac{DMD} is often used today  in fluid dynamics for finding the main modes of a dynamical system. \ac{DMD} is an effective tool for analyzing nonlinear flows \cite{askham2018variable, leroux2016dynamic, gueniat2015dynamic}. It is an approximation of the linear infinite-dimensional Koopman operator \cite{schmid2010dynamic,koopman1931hamiltonian,mezic2005spectral}.
We focus on a gradient flow of a homogeneous functional $R$, 
\begin{equation*}
    \psi_t=P(\psi),\qquad P=-\partial R_\psi,\quad \psi(t=0)=f,
\end{equation*}
where $P$ is a homogeneous operator (typically with order of homogeneity in the range $[0,1]$). When a norm or a semi-norm is minimized, in its standard or quadratic form, we obtain such flows.
As shown in \cite{cohen2020Introducing}, the solution of this equation reaches its steady state in finite time (for order strictly less than one). Moreover, the solution is separable (in time and space) if $f$ is a (nonlinear) eigenfunction of $P$, i.e. $f$ solves the nonlinear eigenvalue problem $P(f)=\lambda \cdot f$.
Precisely, the solution is a multiplication between the initial condition $f$ and a time dependant function $a(t)$: $\psi(t)=a(t)\cdot f$, where $a(t)$ has a closed form solution, which depends on the degree of the homogeneity of $P$ and on the eigenvalue $\lambda$. 

With the purpose of better understanding \ac{DMD} of homogeneous flows, we examine the analytic solutions of such flows. We present a closed form solution of \ac{DMD} in these cases and discover an inherent flaw. Specifically, for general homogeneous flows, \ac{DMD} can not recover the extinction time of the dynamics, and it induces a significant reconstruction error. Our analysis further shows that \ac{DMD} is well-suited for flows with one-homogeneous operators. Consequently and inspired by \cite{cohen2019stable}, we suggest a new scheme which employs an adaptive time sampling instead of a fixed time step size. We show that our temporal re-scaling is equivalent to evolving a one-homogeneous flow. With this adaptation, \ac{DMD} is able to recover homogeneous flows of order $[0,1]$. In the general case, we additionally obtain a much better mode recovering scheme which captures the dynamics of the flow well. Next, we show that the obtained modes approximate nonlinear eigenfunctions, allowing us to link \ac{DMD} to nonlinear spectral theory. In summary, our analysis and results yield a new and simple spectral decomposition framework. Our work generalizes previous studies which directly formulated nonlinear spectral representations based on total-variation \cite{gilboa2013spectral, gilboa2014total} and one-homogeneous functionals
\cite{burger2016spectral,bungert2019nonlinear} by applying (weak) time-derivatives to the solution of a gradient flow. It is also related to previous research in which signals were analyzed by their decay profile, as shown in \cite{Katzir2017Thesis, gilboa2018nonlinear}.

\paragraph{Main contributions and structure of paper}
Our contributions can be summarized as follows: \begin{enumerate}
    \item It is shown that \ac{DMD} is not effective for homogeneous flows with homogeneity different than one.
    This inherent limitation is formulated by what we term \emph{the \ac{DMD} paradox}, where as the step-size decreases, the standard DMD-error approaches zero, whereas the reconstruction error has a strictly positive lower bound.
    
    \item We propose a temporal re-parametrization scheme of the data sampling. We study cases with an analytic solution, and we show that our re-parametrization yields a single mode in \ac{DMD} which can be perfectly reconstructed. Finally, the relation to an analogue one-homogeneous flow is shown.
    
    \item The temporal re-parametrization of the data is generalized to arbitrary step sizes and to any homogeneity. We term this adaptation as the \emph{blind homogeneity normalization}, where the blind is twofold, neither the operator nor the temporal sampling are known.
    
    \item We adapt the \ac{DMD} algorithm to real valued spectrum systems, common in smoothing-type (non-oscillatory) flows. We refer to it as \acf{SDMD}.

    \item We introduce a new discrete analysis and synthesis framework of signals related to homogeneous flows of homogeneity order in $[0, 1]$. Our framework is based on orthogonal modes which approximate nonlinear eigenfunctions. We thus refer to it as Orthogonal Nonlinear Spectral decomposition. We numerically compare our decomposition to the method in \cite{cohen2020Introducing}, and we show that our scheme is simpler, more general, and it is $1-2$ orders of magnitude faster than \cite{cohen2020Introducing}. 

\end{enumerate}

The paper is organized as follows. We briefly recall the necessary mathematical definitions and previous results in Sec. \ref{sec:Preliminaries}. In Sec. \ref{sec:ourCont} a closed form solution for \ac{DMD} in certain cases is given and the paradox for homogeneous flows is stated. Our solution is proposed (in non-blind and blind versions) and analyzed. The \emph{OrthoNS} representation is formalized.
In addition, we introduce the \ac{SDMD} method such that \ac{DMD} is based on a symmetric matrix. In Sec. \ref{sec:results} we demonstrate \ac{SDMD}, then show the main modes of two gradient descent flows with respect to the  $p-$Dirichlet energy ($p=1.01$ and $p=1.5$), when initialized with a square peak. Filtering of signals by \emph{OrthoNS} are presented along with a comparison to \cite{cohen2020Introducing}.  We conclude our work and discuss future directions in Sec. \ref{sec:conclusion}.

\section{Preliminaries}\label{sec:Preliminaries} 
Let $\mathcal{H}$ be a real Hilbert space equipped with a norm $\|\cdot \|$. Typically, in a discrete setting, we have  $\mathcal{H}=\mathbb{R}^M$ and a Euclidean norm.
A common optimization problem, given some data $f\in \mathcal{H}$, is to seek a solution $\psi \in \mathcal{H}$
which minimizes
\begin{equation}\label{eq:optPro}
    G(\psi)=F(\psi,f)+R(\psi),
\end{equation}
where $F:\mathcal{H}\to \mathbb{R}^+$ is a fidelity (or data) term
and $R:\mathcal{H}\to \mathbb{R}^+$ is a regularization term.
In the most simple case, the denoising problem, $F$ can be the square $\ell^2$ norm and $R$ is the Dirichlet energy or, alternatively, the total-variation energy (yielding Tikhonov \cite{tikhonov2013numerical} or ROF \cite{rudin1992nonlinear} models, respectively).
The solution $\psi^*= \argmin_\psi G(\psi,f)$ is a compromise between the noisy data and a regular solution.
In this paper we focus on regularization terms which are absolutely $p$-homogeneous functionals, admitting
\begin{equation}\label{eq:homoFun}
    R(a\cdot \psi)=\abs{a}^{p}\cdot R(\psi),
\end{equation}
for any $a\in\mathbb{R}$. One can obtain a local minimizer by evolving a gradient descent process with respect to the total energy $G$. In the denoising case, when the fidelity is a simple Euclidean norm, an alternative solution is to evolve gradient descent with respect to $R$ only and to stop at a certain desired time-point in the process.
Given $-P = \partial_\psi R(\psi)$ the gradient descent flow is 
\begin{equation}\label{eq:homoFlow}
    \psi_t=-\partial_\psi R(\psi)=P(\psi),\quad \psi(0)=f,
\end{equation}
where $\psi_t$ is the time  derivative of the solution, and the initial condition is $f$.
The operator $P$ is $(p-1)$-homogeneous, 
\begin{equation}\label{eq:homo}
    P(a\cdot \psi)=a\abs{a}^{p-2}\cdot P(\psi), \,\,\,a\in\mathbb{R}.
\end{equation}
We refer to Eq. \eqref{eq:homoFlow} as a \emph{homogeneous flow}.

\subsection{The {\it{\textbf{p}}}-Framework} 

Based on nonlinear spectral representations of one-homogeneous functionals \cite{gilboa2014total,burger2016spectral,bungert2019nonlinear}, an extension to functionals of homogeneity $p\in(1,2)$ was proposed in \cite{cohen2020Introducing}.
As in previous studies, the representation is based on manipulating a gradient flow process. In \cite{cohen2020Introducing} the flow of Eq. \eqref{eq:homoFlow} was analyzed. 
A special case was investigated more deeply, where the solution of Eq. \eqref{eq:homoFlow} admits a separation of variables,
\begin{equation} \label{eq:SpF}
    \psi(t)=a(t) \cdot f.
\end{equation}
In this case the initial condition $f$ remains unchanged (spatially), while its scale changes over time. This form of solution is obtained iff $f$ is a (nonlinear) eigenfunction of $P$, i.e. it solves the following nonlinear eigenvalue problem,
\begin{equation}\label{eq:homoEF}\tag{\bf EF}
    P(f)=\lambda\cdot f,
\end{equation}
where $\lambda \in \mathbb{R}$ is the eigenvalue. 
The function $a(t)$ can be viewed as a \emph{decay profile}. The decay depends on the eigenvalue and the order of homogeneity, and it is given by,
\begin{equation}\label{eq:decayProfile}
    a(t)=\left[(1+(2-p)\lambda\cdot t)^+\right]^\frac{1}{2-p},
\end{equation}
where $(a)^+=\max\{0,a\}$. 
We note that the operator $-P(\cdot)$ is assumed to be maximally monotone. Therefore, its spectrum is non-positive, where $\lambda \le 0$.
The decay profile \eqref{eq:decayProfile} has thus a finite support in time. The solution reaches its steady state in finite time, termed as the \emph{extinction time}. The extinction time is given by
\begin{equation}\label{eq:homoExtinctionTime}
    T=-\frac{1}{\lambda(2-p)}.
\end{equation}
It can also be shown that under mild conditions on  $P$ the flow \eqref{eq:homoFlow} converges in finite time for arbitrary initial conditions.
A particular example for $R$ is the $p$-Dirichlet energy, thoroughly studied in the context of image processing by Kuijper (c.f. \cite{kuijper2007p}),
\begin{equation}\label{eq:pEnergy}
    R(\psi)=J_p(\psi)=\frac{1}{p}\norm{\nabla \psi}^p.
\end{equation}
Kuijper suggested to use the respective gradient descent flow as a nonlinear scale space,
\begin{equation}\label{eq:pFlow}\tag{\bf p-Flow}
    \psi_t = \Delta_p \psi,\quad \psi(0)=f,
\end{equation} 
where $\Delta_p(\cdot)$ is the $p$-Laplacian operator, 
\begin{equation}
    \Delta_p(\psi)=\div{\{\abs{\nabla \psi}^{p-2} \nabla \psi\}}.
\end{equation}
The $p$-Dirichlet is an absolutely $p$-homogeneous functional and the $p$-Laplacian operator is coercive and maximally monotone. Therefore, the discussion above is valid for \eqref{eq:pFlow} for $p\in[1,2)$. 
%
%
If the initial condition, $f$, admits \eqref{eq:homoEF}, for $P=\Delta_p$, then the solution of \eqref{eq:pFlow} is given by Eqs. \eqref{eq:SpF}, \eqref{eq:decayProfile} and  the extinction time is \eqref{eq:homoExtinctionTime}. 
In Fig. \ref{Fig:EFDecay} the process \eqref{eq:pFlow} is depicted for $p=1.5$.

\begin{figure}[phtb!]
\centering
\captionsetup[subfigure]{justification=centering}
\subfloat[Decay profile]
{
\includegraphics[trim=30 20 0 40, clip,width=0.25\textwidth,valign=c]{./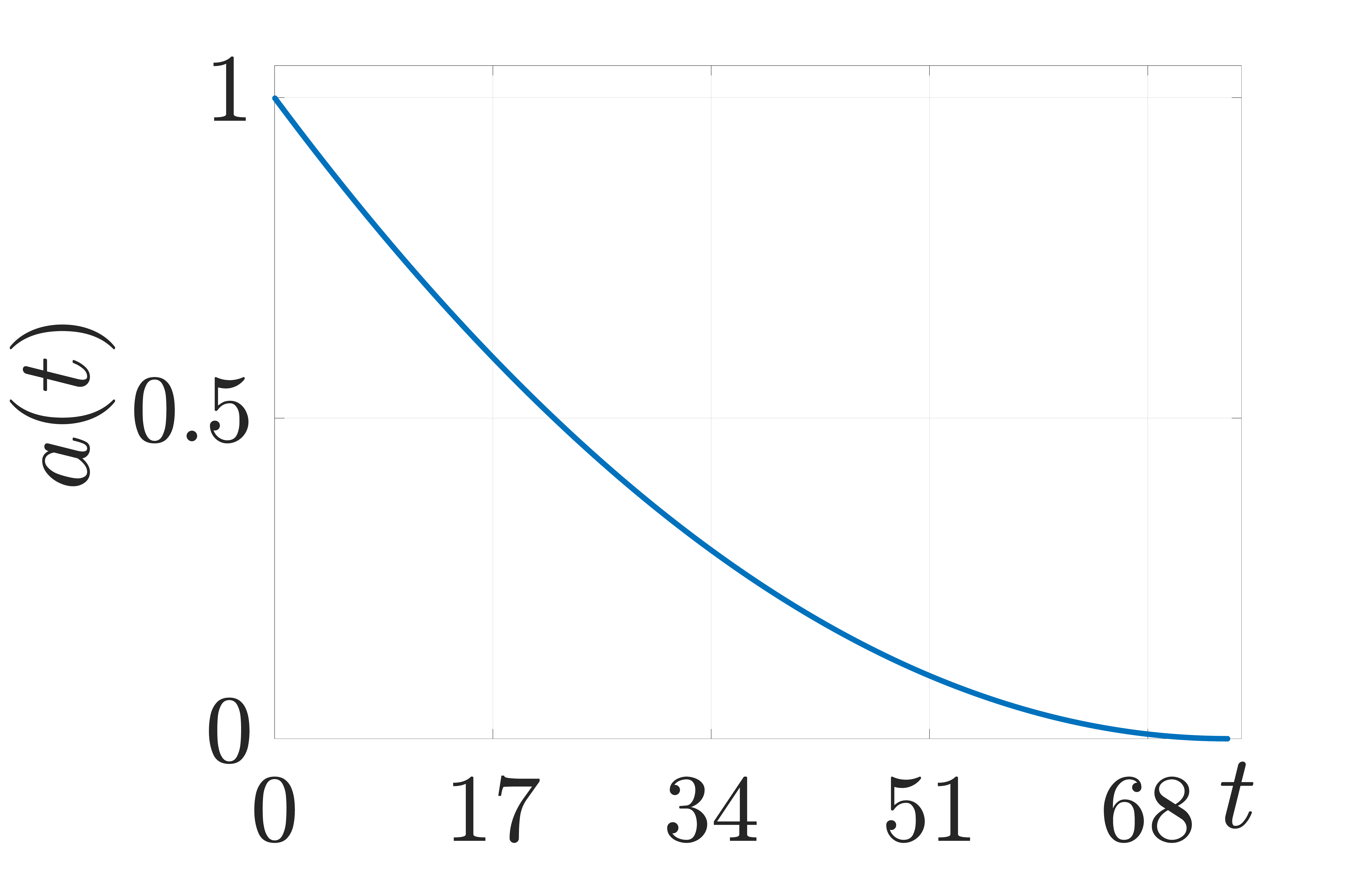}
\label{subfig:decayD2}
}$\quad$
\subfloat[$T=0$]
  {
\includegraphics[trim=0 -30 0 0, clip,width=0.125\textwidth,valign=c]{./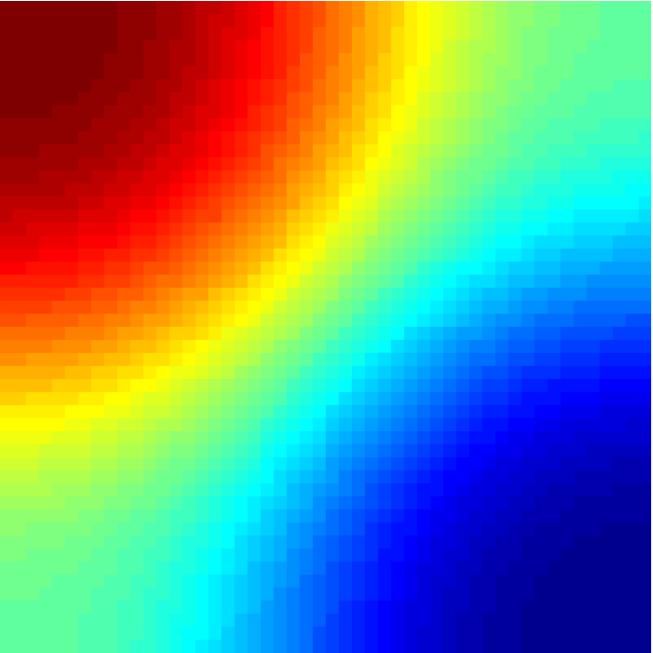}
\label{subfig:EFT0}
}
$\cdots$
\subfloat[$T=12$]
{
\includegraphics[trim=0 -30 0 0, clip,width=0.125\textwidth,valign=c]{./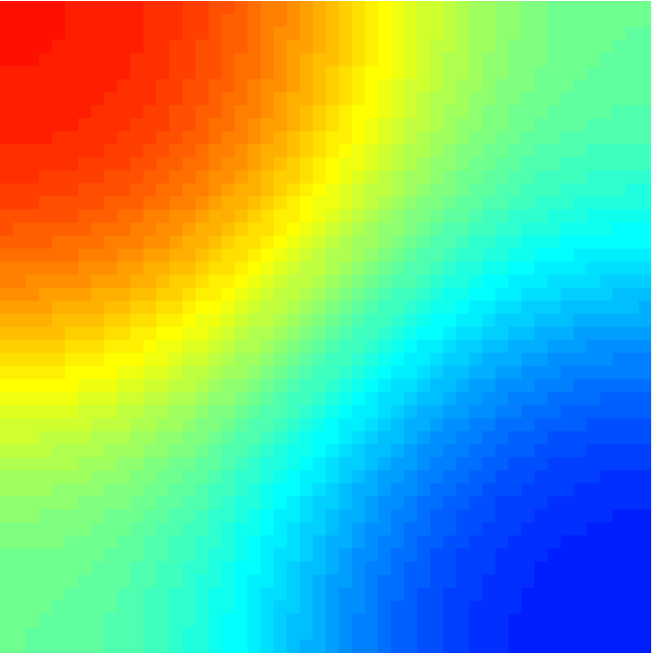}
\label{subfig:EFT20}
}
$\cdots$
\subfloat[$T=28$]
{
\includegraphics[trim=0 -30 0 0, clip,width=0.125\textwidth,valign=c]{./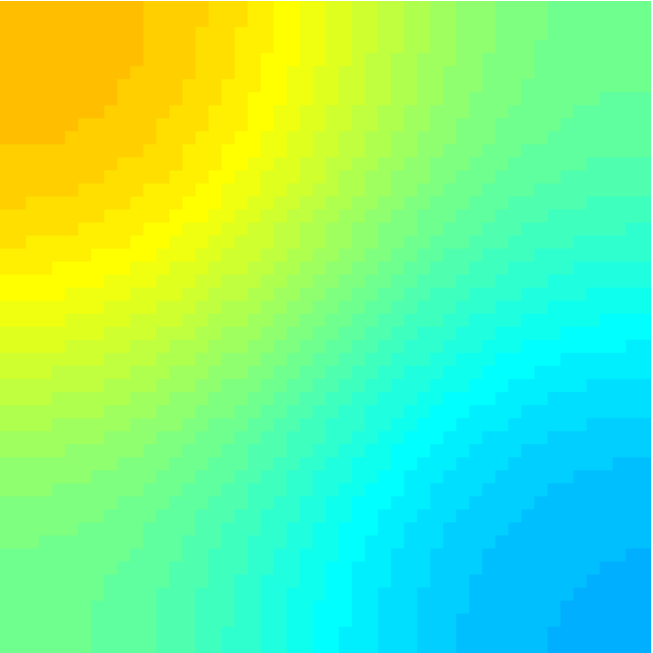}
\label{subfig:EFT40}
}
$\cdots$
\subfloat[$T=74$]
  {
\includegraphics[trim=0 -30 0 0, clip,width=0.125\textwidth,valign=c]{./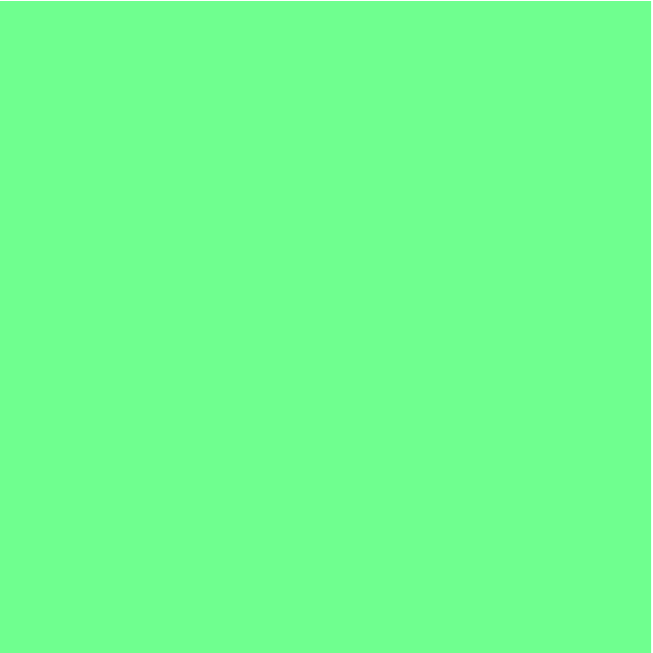}
\label{subfig:EFT74}
}
\caption{{\bf The solution of $\psi_t=\Delta_p \psi$ initialized with an eigenfunction ($p=1.5,\,\lambda=-0.0269$).} Left to right: {\bf(a)} The decay profile, \eqref{eq:decayProfile}, {\bf(b)-(e)} snapshots of the solution, $\psi(t)$, at different time points.}
\label{Fig:EFDecay}
\end{figure}

\subsection{Time discretisation of homogeneous flows}\label{sec:ExplicitScheme}
The explicit scheme of Eq. \eqref{eq:homoFlow} reads,
\begin{equation}\label{eq:homoESPflow}
    \psi_{k+1}=\psi_k+P(\psi_k)\cdot dt_k,\quad \psi_0=f.
\end{equation}
If the initial condition, $f$, is an eigenfunction \eqref{eq:homoEF}, then the solution of  \eqref{eq:homoESPflow} is 
\begin{equation}\label{eq:disShP}
    \psi_k = a_k\cdot f,\quad a_k\in \mathbb{R}.
\end{equation}
For $(p-1)$-homogeneous operator $P$ the recurrence relating $a_{k+1}$ to $a_k$ is \cite{cohen2019stable}
\begin{equation}\label{eq:akrecurr}
    a_{k+1} = a_k\left(1+\abs{a_k}^{p-2}\lambda dt_k\right),\quad a_0=1.
\end{equation}
In standard explicit implementations one needs to regularize the functional (and respective operator) to obtain a practical step size (dictated by the CFL condition). This yields either very small step-sizes or strong deviation from the original flow. In  \cite{cohen2019stable} an alternative scheme was proposed which uses an adaptive step size. We will later see how this scheme directly connects to our proposed time re-sampling. 

\paragraph{Adaptive step size policy}
In \cite{cohen2019stable} an adaptive step-size policy was proposed for the explicit scheme \eqref{eq:homoESPflow}, given by
\begin{equation}\label{eq:adaptiveStepSize}
    dt_k=-\frac{\inp{P(\psi_k)}{\psi_k}}{\norm{P(\psi_k)}^2}\cdot \delta,\quad \delta\in(0,2),
\end{equation}
where $\delta$ is a free parameter controlling the speed of the process. It was shown that the scheme is stable for arbitrary $f$ and $\delta\in(0,2)$. For $f$ which is an eigenfunction the solution is Eq. \eqref{eq:disShP}, where 
\begin{equation}\label{eq:seriesAdaptive}
    a_k=(1-\delta)^k.
\end{equation}
We note that although \cite{cohen2019stable} focused on the $p$-Laplacian flow, the above results are valid for any homogeneous operator of order between zero and one which  is coercive and maximally monotone.
See a recent general study \cite{welk2020pde}
on the relations between statistical estimators of order $p$ and their respective PDE's in the limit. 

\subsection{\acf{DMD}}\label{Sec:DMD}
\ac{DMD} \cite{schmid2010dynamic} is an analysis tool used to recover the main spatial structures in a fluid flow. Its stages are detailed in Algorithm \ref{algo:DMD}. We first present the rationale, notations and definitions of the algorithm. Vectors are denoted by boldface and matrices by capital letters. We depict here the general case, however, in the following sections we focus on the representation of the data matrices over the real field.

\paragraph{\bf Matrices of the dynamics} The data consists of $N+1$ snapshots in time of a flow $\bm{\psi_k}\in \mathbb{R}^M$, $\{\bm{\psi_k}\}_{k=0}^N$. 
We construct two $M\times N$ matrices as follows, 
\begin{equation}\label{eq:DMDdataForming}
    \begin{split}
        \Psi_0^{N-1}&=\begin{bmatrix}\bm{\psi_0} & \cdots & \bm{\psi_{N-1}}\end{bmatrix}\\
        \Psi_1^N&=\begin{bmatrix}\bm{\psi_1}&\cdots&\bm{\psi_{N}}\end{bmatrix}.
    \end{split}
\end{equation}

\paragraph{\bf Dimensionality reduction} A main assumption of \ac{DMD} is that the data can be well represented in a lower dimensional space. To reduce the dimensionality we need to find the singular vectors that span the columns of $\Psi_0^{N-1}$. \ac{SVD} is used to find these vectors since the matrix $\Psi_0^{N-1}$ is not square. This decomposition is an extension of the eigenvector problem for non square matrices (for details see e.g. \cite{strang2019linear} Ch. I.8), and it is given by
\begin{equation}\label{eq:SVD}
    \Psi_0^{N-1}=U\Sigma V^*.
\end{equation}
The superscript $^*$ denotes the conjugate transpose. 
The matrix  $U$  is an $M\times N$ orthogonal matrix ($U^*U=I_{N \times N}$),  $V$  is an $N\times N$ orthogonal matrix ($V^*V=I_{N \times N}$), and $\Sigma$ is an $N\times N$ diagonal matrix, where the entries on the diagonal are the singular values.  

We denote by $U_r$ and $V_r$ the submatrices, containing the first $r$ columns of $U$ and $V$, respectively ($r\ll M$).  $\Sigma_r$ is the a submatrix of $\Sigma$, containing the $r\times r$ left upper entries of $\Sigma$. The dimensionality reduction of the data is obtained by setting
\begin{equation}\label{eq:DMDdimeReduction}
    \begin{split}
        X&= U_r^*\Psi_0^{N-1},\qquad
        Y= U_r^*\Psi_1^{N}.
    \end{split}
\end{equation}
The $k$th column in the matrix $X$, denoted by ${\bm{x_k}}$, is the lower dimensional representation of the $k$th snapshot, ${\bm{\psi_k}}$, i.e. ${\bm{x_k}}=U_r^*{\bm{\psi_k}}$.
%
Note that the columns of $U_r$ are a basis of a linear space and the entries of $\bm{x_k}$ can be viewed as coordinates of the snapshot $\bm{\psi_k}$ in that space.

\paragraph{Mode, spectrum and coordinates calculation}
In the lower-dimensional space we seek a linear mapping $F$ from $X$ to $Y$ that minimizes the Frobenius norm
\begin{equation}\label{eq:DMDERR}
    ERR_{DMD}=\min_{F}\norm{Y-FX}^2_\mathcal{F}.
\end{equation}
The solution of this optimization problem is given by,
\begin{equation}\label{eq:DMDmatrix}
    F=YX^T\cdot\left(XX^T\right)^{-1}= U_r^*\Psi_1^{N+1}V_r\Sigma_r^{-1},
\end{equation}
termed as the \emph{DMD matrix}. 
Then, the $k+1$th sample can be expressed as
\begin{equation}\label{eq:DMDreccu}
    \bm{x_{k}}\approx F\cdot \bm{x_{k-1}}.
\end{equation}
We denote by $\approx$  the linear, dimensionality-reduced approximation of the dynamical system. The linear mapping approximation, $F$, minimizes the Frobenius norm of the error with respect to the first $r$ dominant singular vectors of the data.
Assuming the matrix $F$ is full rank, we can reformulate Eq. \eqref{eq:DMDreccu} as
\begin{equation}\label{eq:DMDreccu2}
    \bm{x_{k}}\approx WDW^*\cdot \bm{x_{k-1}},
\end{equation}
where $D$ is a diagonal matrix containing the eigenvalues of $F$, and the matrix $W$ contains the corresponding eigenvectors. 
\paragraph{Reconstructing the Dynamics}
\subparagraph{Discrete time setting}
We can reconstruct the dynamics projected on the lower dimensional space, e.g. the initial condition is reconstructed by $\bm{\tilde{\psi}_0}=U_r\bm{x_0}$.
More generally, to reconstruct a snapshot at stage $k$ we can apply the mapping $F$ $k$ times, 

\begin{equation}\label{eq:modeSpectrumCoo}
    \begin{split}
    \bm{\tilde{\psi}}_{k}= &U_r\cdot F^k\bm{x_0}= U_r\cdot W D^k W^* U_r^*\bm{\psi}_0\\
    =&U_r\cdot\begin{bmatrix}\bm{w_1}&\cdots&\bm{w_r} \end{bmatrix}\cdot 
    \begin{bmatrix}\mu_1^k& &0\\
                    &\ddots&\\
                    0&  &\mu_r^k 
    \end{bmatrix}\cdot
    \begin{bmatrix}\bm{w_1}^*\\\vdots\\\bm{w_r}^*\end{bmatrix}\cdot U_r^*\bm{\psi}_0=\sum_{i=1}^r\alpha_i\mu_i^k\bm{\phi_i},
    \end{split}
\end{equation} 
where the modes, $\{\bm{\phi_i}\}_{i=1}^r$, and coordinates $\{\alpha_i\}_{i=1}^r$ are
\begin{equation*}
    \bm{\phi_i}=U_r \bm{w_i},\quad \alpha_i={\bm{w_i}}^*U_r^*\psi_0,
\end{equation*}
and $\{\mu_i\}_{i=1}^r$ are the eigenvalues of the matrix $F$ (the spectrum).
\begin{algorithm}[phtb!] \caption{Standard DMD \cite{schmid2010dynamic}}
\begin{algorithmic}[1]
		\Inputs{Data sequence $\{{\bm{\psi}_k}\}_0^{N}$}
		\State Arrange the data into the matrices $\Psi_0^{N-1}$ and $\Psi_1^{N}$ according to Eq. \eqref{eq:DMDdataForming}.
		\State Compute the \acf{SVD} of $\Psi_0^{N-1}$ (see \cite{trefethen1997numerical}) to the multiplication in Eq. \eqref{eq:SVD}.
		\State Dimensionality reduction. Reformulate the data matrices, $\Psi_0^{N-1},\,\Psi_1^{N}$ (denoted by $X$ and $Y$, respectively) with the first $r$ singular vectors from the matrix, $U$,  Eq. \eqref{eq:DMDdimeReduction}.
		\State \label{state:DMDmatrix}Find the optimal linear mapping, $F$, between $X$ and $Y$ in the sense of Eq. \eqref{eq:DMDERR}.
		 The solution is given by Eq. \eqref{eq:DMDmatrix}.
		\State Under the assumption that $F$ is a full rank matrix, compute eigenvalues $\mu$ and right eigenvectors $\bm{v}$ of $F$, the corresponding modes $\bm{\phi}$, and the corresponding coordinates $\alpha$ by 
		\begin{equation}\label{eq:DMDResult}
		F\bm{w}=\mu \bm{w},\quad \bm{\phi}\triangleq U_r\bm{w},\quad\alpha\triangleq \bm{w}^* U_r^*\bm{\psi_0}.    
		\end{equation}
		
		\Outputs{$$\{\mu_i,\bm{\phi_i},\alpha_i\}_1^r$$}
    \end{algorithmic}
    \label{algo:DMD}
\end{algorithm}
Note that, the linear mapping from $\tilde{\psi}_k$ to $\tilde{\psi}_{k+1}$, denoted by $A$ is give  by,
\begin{equation}\label{eq:linearApproxA}
    A = U_r\cdot F\cdot U_r^*.
\end{equation}
This mapping can be interpreted as a linear approximation of the dynamical system. The modes $\{\phi_i\}$ are the right eigenvectors of the matrix $A$ and the corresponding eigenvalues are $\{\mu_i\}$.
For reconstruction, we define the (time-discrete) \emph{reconstruction error}, $ERR^d_{Rec}$, as
\begin{equation}\label{eq:reconErr}
ERR_{Rec}^d=\sum_{k=0}^N\norm{\bm{\tilde{\psi}_k}-\bm{\psi_k}}^2,
\end{equation}
where $\bm{\tilde{\psi}_k}$ is defined in Eq. \eqref{eq:modeSpectrumCoo}. 
\subparagraph{Continuous time setting}
One can expand the reconstruction, Eq. \eqref{eq:modeSpectrumCoo}, to the continuous time setting. The discrete reconstruction is a  sampling of a continuous exponential function, therefore, with the identity, 
\begin{equation*}
    \mu_i=e^{\tilde{\mu}_i dt},
\end{equation*}
where $dt$ is the sampling time step 
Thus, in the time continuous setting, the dynamics reconstruction and the corresponding error take the form,
\begin{equation}\label{eq:DMDreconConti}
    \tilde{\psi}(t)=\sum_{i=1}^r\alpha_i\bm{\phi_i}e^{\tilde{\mu}_it},\quad
    \tilde{\mu}_i=\frac{\ln(\mu_i)}{dt}, \quad
    ERR_{Rec}^c = \int(\tilde{\psi}(t)-\psi(t))^2dt.
\end{equation}

We would like also to consider the limit case, as the step size between consecutive snapshots, $dt$, approaches zero. In that case, the eigenvalue $\tilde{\mu}_i$ is the limit of the quotient written above.

We note that this is the classical algorithm and several variations and extensions were further proposed. It was shown in \cite{kutz2016dynamic} that \ac{DMD} is sensitive to noisy data . Specifically, the spectrum estimation is systematically biased in the presence of noise. This bias is not relaxed when more data is gathered \cite{hemati2017biasing}. The effect of small sensor noise on \ac{DMD} and on
the Koopman expansion was studied and characterized in \cite{bagheri2013effects}. Several attempts have been made to remove this bias. Dawson et al. proposed the forward and backward dynamics to reduce the noise \cite{dawson2016characterizing}. Hemati et al. formulated the problem as a total least squares optimization \cite{hemati2017biasing}. A variational approach was proposed in \cite{azencot2019consistent}. The authors in \cite{nonomura2018dynamic, nonomura2019extended} used Kalman filters to cope with the noise. Williams et al. \cite{williams2015data} suggested to extend the basis of the sampled data snapshots, while \cite{li2017extended} uses deep learning to learn the basis dictionary of the operator. More recent approaches harness the benefits of neural networks to propose effective Koopman-based designs \cite{azencot2020forecasting}. 


\section{\ac{DMD} for homogeneous and symmetric flows}
\label{sec:ourCont} 
This section presents the main novelties of the paper. \ac{DMD} is analyzed for homogeneous flows, its flaws are exposed and a solution is presented in the form of a time re-sampling scheme. For non-oscillatory flows we propose \ac{SDMD}. Finally, we show connections of the modes to nonlinear eigenfunctions and propose the \emph{OrthoNS} analysis and synthesis framework.

\subsection{\ac{DMD} for homogeneous flows} 
There are two prominent assumptions in \ac{DMD}; first, the dynamics can be represented linearly in a lower dimensional space; second, the data is sampled uniformly in time. These assumptions allow us to interpret the system as a linear one \cite{schmid2010dynamic} and to consider \ac{DMD} as an exponential data fitting algorithm \cite{askham2018variable}.
Thus, the finite extinction time of homogeneous flows is inherently hard to model in this framework. In what follows, we show the inconsistency and error of \ac{DMD} applied to flows initiated with eigenfunctions. 

\subsubsection{Closed from solution of \ac{DMD}}
We begin by computing the \ac{DMD} modes of the homogeneous flow, Eq. \eqref{eq:homoFlow}, when the initial condition is an eigenfunction. Let us recall that the solution is Eq. \eqref{eq:SpF}. Therefore, by sampling this solution with respect to time (with a fixed step size) we get
\begin{equation}\label{eq:disShPcloseRef}
    \psi_k = a_k\cdot f,\quad a_k\in \mathbb{R},\, a_0 =1.
\end{equation}
Therefore, the data matrices $\Psi_1^N$ and $\Psi_0^{N-1}$ (Eq. \eqref{eq:DMDdataForming}) are in the form of
\begin{equation}\label{eq:DMDmatrixHomoEF}
    \Psi_1^N=f\cdot \left({\bm{a}_1^N}\right)^T,\quad \Psi_0^{N-1}=f\cdot \left({\bm{a}_0^{N-1}}\right)^T,
\end{equation}
where $\bm{a}_k^m=\begin{bmatrix}
    a_k&\cdots&a_m
    \end{bmatrix}^T$. The following Lemma formulates an analytic solution of the classical \ac{DMD} for these cases.

\begin{lemma}[Analytic solution]\label{lem:inconsistencyDMD} 
Let the dynamical system be Eq. \eqref{eq:homoFlow} and the initial condition $f$ is an eigenfunction per Eq. \eqref{eq:homoEF}.
We obtain the following analytic solution and error for classical \ac{DMD}, Algo. \ref{algo:DMD}, 
for $r=1$: 
    \begin{equation}\label{eq:lemmaFirstResult}
        \mu=\frac{\inp{\bm{a}_1^N}{\bm{a}_0^{N-1}}}{\norm{\bm{a}_0^{N-1}}^2},\quad \phi = \frac{f}{\norm{f}},\quad \alpha= \norm{f}.
    \end{equation}
The \ac{DMD} error (Eq. \eqref{eq:DMDERR}) is
    \begin{equation}\label{eq:lemmaSecondResult}
       ERR_{DMD}=\norm{\bm{a}_1^{N}}^2 - \frac{\inp{\bm{a}_1^N}{\bm{a}_0^{N-1}}^2}{\norm{\bm{a}_0^{N-1}}^2},
    \end{equation}
where $\bm{a}_0^N$ and $\bm{a}_1^{N+1}$ are defined in \eqref{eq:DMDmatrixHomoEF}.
For $r>1$ the solution does not exist.
\end{lemma}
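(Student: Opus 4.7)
The plan is to exploit the rank-one structure that the eigenfunction assumption forces on the snapshot matrices, and then just turn the crank of Algorithm \ref{algo:DMD}. From \eqref{eq:DMDmatrixHomoEF} the data matrices factor as outer products
\[
\Psi_0^{N-1}=f\,(\bm{a}_0^{N-1})^T,\qquad \Psi_1^{N}=f\,(\bm{a}_1^{N})^T,
\]
so each is rank one (assuming $f\neq 0$ and the scalar sequence is not identically zero, which follow from the eigenfunction hypothesis and Eq.~\eqref{eq:decayProfile} on the sampled window). The SVD of $\Psi_0^{N-1}$ therefore has a single non-zero singular value, and by inspection one can read off
\[
U_1=\frac{f}{\|f\|},\qquad \Sigma_1=\|f\|\cdot\|\bm{a}_0^{N-1}\|,\qquad V_1=\frac{\bm{a}_0^{N-1}}{\|\bm{a}_0^{N-1}\|}.
\]
This immediately yields the $r>1$ claim: the diagonal of $\Sigma$ is zero past the first entry, so $\Sigma_r$ is singular for $r>1$ and the matrix $F$ in \eqref{eq:DMDmatrix} is not defined.

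Next I would carry out the reduction of \eqref{eq:DMDdimeReduction} for $r=1$. Substituting $U_1=f/\|f\|$ gives the scalar row vectors
\[
X=U_1^*\Psi_0^{N-1}=\|f\|\,(\bm{a}_0^{N-1})^T,\qquad Y=U_1^*\Psi_1^{N}=\|f\|\,(\bm{a}_1^{N})^T.
\]
The DMD matrix $F$ is then a $1{\times}1$ object computed by the least squares formula $F=YX^T(XX^T)^{-1}$; the factors of $\|f\|^2$ cancel and one obtains $F=\mu=\langle\bm{a}_1^N,\bm{a}_0^{N-1}\rangle/\|\bm{a}_0^{N-1}\|^2$, which is both the unique eigenvalue and the entry of the (trivial) eigenvector $w=1$. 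Plugging into \eqref{eq:DMDResult} gives $\phi=U_1 w=f/\|f\|$ and $\alpha=w^{*}U_1^{*}\psi_0=f^T f/\|f\|=\|f\|$, as claimed in \eqref{eq:lemmaFirstResult}.

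For the error \eqref{eq:lemmaSecondResult} I would use the standard identity for orthogonal projection in a least squares problem: since $\mu$ is exactly the projection coefficient of $\bm{a}_1^N$ onto $\bm{a}_0^{N-1}$,
\[
\|\bm{a}_1^N-\mu\bm{a}_0^{N-1}\|^2=\|\bm{a}_1^N\|^2-\frac{\langle\bm{a}_1^N,\bm{a}_0^{N-1}\rangle^{2}}{\|\bm{a}_0^{N-1}\|^{2}}.
\]
Multiplying by the $\|f\|^2$ that scales $Y-FX$ (or normalising $f$, consistent with the lemma's statement) gives the displayed formula. I do not anticipate a hard step here: once the rank-one factorisation is spelled out, everything reduces to elementary projection. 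The only subtlety worth flagging is the non-existence claim for $r>1$, where I would explicitly note that $\Sigma_r^{-1}$ in \eqref{eq:DMDmatrix} fails to exist because the only non-zero singular value has already been consumed at $r=1$.
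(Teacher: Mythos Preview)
Your proposal is correct and follows essentially the same route as the paper: both exploit the rank-one outer-product factorisation of the data matrices, read off the single-term SVD, reduce the $r=1$ DMD step to a scalar least-squares problem, and obtain \eqref{eq:lemmaFirstResult}--\eqref{eq:lemmaSecondResult} via the projection identity; the $r>1$ non-existence is argued identically through the singularity of $\Sigma_r$. Your treatment is slightly tidier in that you explicitly flag the $\|f\|^2$ scaling issue in the error formula, which the paper passes over silently.
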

\begin{proof}$\quad$\\
The data matrix $\Psi_0^{N-1} $ (in Eq. \eqref{eq:DMDmatrixHomoEF}) can be reformulated as
\begin{equation*}
    \Psi_0^{N-1} = U\cdot \Sigma \cdot V^*=\frac{f}{\norm{f}}\cdot \norm{f}\norm{\bm{a}_0^{N-1}} \cdot \frac{\bm{a}_0^{N-1}}{\norm{\bm{a}_0^{N-1}}}.
\end{equation*}
The \ac{SVD} of this matrix is $\Psi_0^{N-1}=U\Sigma V^*$ 
where,
$U$ is the column vector ${f}/{\norm{f}}$ concatenated by a zero matrix of size ($M\times (N-1)$), $\Sigma$ is $N\times N$ matrix with zeros everywhere except the entry $(1,1)$ where $\Sigma(1,1)=\norm{f}\norm{\bm{a}_0^{N-1}}$, and $V$ is the column vector ${\bm{a}_0^{N-1}}/{\norm{\bm{a}_0^{N-1}}}$ concatenated by a zero matrix of size ($N\times (N-1)$). More formally,
\begin{equation*}
    U=\begin{bmatrix}
    \frac{f}{\norm{f}}&{\bm{0}}&\cdots&{\bm{0}}
    \end{bmatrix},\, \Sigma = diag\left(\begin{bmatrix}
    \norm{f}\norm{\bm{a}_0^{N-1}}&0&\cdots&0
    \end{bmatrix}\right), \, V=\begin{bmatrix}
    \frac{\bm{a}_0^{N-1}}{\norm{\bm{a}_0^{N-1}}}&{\bm{0}}&\cdots&{\bm{0}}
    \end{bmatrix},
\end{equation*}
where $U\in\mathbb{R}^{M\times N}$, $\Sigma\in\mathbb{R}^{N\times N}$,  $V\in\mathbb{R}^{N\times N}$, and ${\bm{0}}$ is a column zero vector in $\mathbb{R}^M$ or $\mathbb{R}^N$.
If $r=1$ then $U_1,\, V_1$ are vectors and $\Sigma_1$ is a scalar, where they accurately reconstruct $\Psi_0^{N-1}$. This is an expected result since the rank of $\Psi_0^{N-1}$ is one. Consequently, $Y$ and $X$ are the following vectors,
\begin{equation*}
    Y=U_1^*\cdot\Psi_1^{N}= \norm{f}{\bm{a}_1^{N}},\quad X=U_1^*\cdot\Psi_0^{N-1}=\norm{f}{\bm{a}_0^{N-1}}.
\end{equation*}
The \ac{DMD} matrix becomes a scalar, $\mu$, which minimizes the following term 
\begin{equation*}
\begin{split}
    \mu_{min}=\arg\min_{\mu}\{\norm{Y-\mu X}_F^2\}=\arg\min_{\mu}\{\norm{\bm{a}_1^N-\mu\bm{a}_0^{N-1}}^2\}=\frac{\inp{\bm{a}_1^N}{\bm{a}_0^{N-1}}}{\norm{\bm{a}_0^{N-1}}^2}.
\end{split}
\end{equation*}
The eigenvector is $v=1$. Using the above results and \eqref{eq:DMDResult}, \eqref{eq:DMDERR} yields \eqref{eq:lemmaFirstResult} and  \eqref{eq:lemmaSecondResult}. The error is strictly positive unless the vectors $\bm{a}_1^{N}$ and $\bm{a}_0^{N-1}$ are co-linear. This case does not happen for a fixed step size. 


If $r>1$ then neither the matrix $XX^T$ nor $\Sigma_r$ are invertible. Therefore, the solution of Eq. \eqref{eq:DMDmatrix} does not exist. 

\end{proof}

\subsubsection{Sampling and Dimensionality vs. Error - The \ac{DMD} paradox}
The linear mapping, $F$, minimizes the Frobenius norm of the recurrence relation error. 
As a conclusion from Lemma \ref{lem:inconsistencyDMD},  $ERR_{DMD}$ decreases by increasing the sampling rate, since $\mu$ approaches zero as the step size approaches zero. In the next Lemma, we formulate the \ac{DMD} solution when the step size approaches zero.

\begin{lemma}[Time-continuous reconstruction]\label{lem:ConRec} Let the conditions of Lemma \ref{lem:inconsistencyDMD} hold. Let the dynamical system be sampled $N$ times in the interval $[0,T_{ext}]$, where the step size is $dt$. We denote by $\tilde{\psi}(t)= \bm{\tilde{\psi}_k}$, where $t=k\cdot dt$ and $\bm{\tilde{\psi}_k}$ is defined in \eqref{eq:modeSpectrumCoo}.
As $N\to \infty$  ($dt\to 0$),  the time-continuous reconstruction is 
\begin{equation*}
    \tilde{\psi}(t)=f\cdot e^{\tilde{\mu}t},
\end{equation*}
where
\begin{equation}
    \tilde{\mu} = \lambda\frac{4-p}{2}.
\end{equation}
\end{lemma}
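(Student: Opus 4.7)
The plan is to take the closed-form expressions from Lemma~\ref{lem:inconsistencyDMD}, plug in the known decay profile \eqref{eq:decayProfile}, recognise the finite sums as Riemann sums, and pass to the limit $dt\to 0$ with $N\,dt = T_{ext}$ fixed. By Lemma~\ref{lem:inconsistencyDMD} the mode and coordinate are $\phi=f/\|f\|$ and $\alpha=\|f\|$, so the continuous reconstruction $\tilde\psi(t)=\alpha\,\phi\,e^{\tilde\mu t}$ from \eqref{eq:DMDreconConti} already collapses to $f\cdot e^{\tilde\mu t}$. The only thing that needs to be computed is $\tilde\mu=\lim_{dt\to 0}\ln(\mu)/dt$.

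\noindent\textbf{Key computation.} Write $\mu=\langle\bm{a}_1^N,\bm{a}_0^{N-1}\rangle/\|\bm{a}_0^{N-1}\|^2$ in the equivalent form
\begin{equation*}
\mu-1\;=\;\frac{\sum_{k=0}^{N-1} a_k\,(a_{k+1}-a_k)}{\sum_{k=0}^{N-1} a_k^{\,2}},
\end{equation*}
with $a_k=a(k\,dt)$ and $a(t)=\bigl[(1+(2-p)\lambda t)^+\bigr]^{1/(2-p)}$. Because $a_{k+1}-a_k=a'(t_k)\,dt+O(dt^2)$ uniformly on $[0,T_{ext}]$ (the derivative $a'(t)=\lambda\,a(t)^{p-1}$ stays bounded for $p\in[1,2)$), the numerator is a Riemann sum that converges to $\int_0^{T_{ext}}a(t)\,a'(t)\,dt=\tfrac12\bigl(a(T_{ext})^2-a(0)^2\bigr)=-\tfrac12$, whereas the denominator, when multiplied by $dt$, converges to $I:=\int_0^{T_{ext}}a(t)^2\,dt$. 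Hence
\begin{equation*}
\mu-1\;=\;-\,\frac{dt}{2I}+o(dt),
\qquad
\tilde\mu\;=\;\frac{\ln\mu}{dt}\;\xrightarrow[dt\to 0]{}\;-\,\frac{1}{2I}.
\end{equation*}

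\noindent\textbf{Evaluating the integral.} A direct change of variables $u=1+(2-p)\lambda t$ gives
\begin{equation*}
I\;=\;\int_0^{T_{ext}}\!\!\bigl(1+(2-p)\lambda t\bigr)^{2/(2-p)}\,dt
\;=\;\frac{1}{(2-p)\lambda}\cdot\frac{2-p}{4-p}\cdot(0-1)
\;=\;-\frac{1}{\lambda(4-p)},
\end{equation*}
which is positive since $\lambda<0$. Substituting back yields $\tilde\mu=-1/(2I)=\lambda(4-p)/2$, which combined with the first paragraph gives the claimed $\tilde\psi(t)=f\cdot e^{\tilde\mu t}$.

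\noindent\textbf{Expected difficulty.} The only non-mechanical point is justifying that the discrete quantities really behave as Riemann sums near the extinction time, where $a$ touches zero. For $p\in[1,2)$ one has $(p-1)/(2-p)\geq 0$, so $a'$ remains bounded on $[0,T_{ext}]$ (in fact $a'(T_{ext})=0$ for $p>1$), and the Riemann-sum approximation is uniform. This is the only point where one must be a little careful; everything else is algebra from Lemma~\ref{lem:inconsistencyDMD}.
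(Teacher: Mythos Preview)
Your proposal is correct and follows essentially the same route as the paper: both rewrite $\mu$ so that $\mu-1$ (equivalently, $\ln(1+\cdot)$ via Taylor) is a ratio whose numerator is a Riemann sum for $\int_0^{T_{ext}} a\,a'\,dt=-\tfrac12$ and whose denominator, times $dt$, is a Riemann sum for $\int_0^{T_{ext}} a^2\,dt=-1/(\lambda(4-p))$, yielding $\tilde\mu=\lambda(4-p)/2$. Your explicit check that $a'(t)=\lambda\,a(t)^{p-1}$ remains bounded near the extinction time is a welcome addition that the paper leaves implicit.
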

\begin{proof}
According to Lemma \ref{lem:inconsistencyDMD} the eigenvalue is
\begin{equation*}
    \mu=\frac{\inp{\bm{a}_1^N}{\bm{a}_0^{N-1}}}{\norm{\bm{a}_0^{N-1}}^2}=\frac{\sum_{k=1}^Na_ka_{k-1}}{\sum_{k=1}^N{a_{k-1}^2}},
\end{equation*}
where the series $\{a_k\}$ is the sampled solution, Eq. \eqref{eq:decayProfile}, i.e. $a_k=a(t_k)=a(k\cdot dt)$. Then,
\begin{equation*}
    \begin{split}
        \tilde{\mu} &=\frac{\ln{\left(\mu\right)}}{dt} = 
        \frac{1}{dt}\ln\left(\frac{\sum_{k=1}^Na_ka_{k-1}}{\sum_{k=1}^N{a_{k-1}^2}}\right)=\frac{1}{dt}\ln\left(\frac{\sum_{k=1}^N{a_{k-1}^2}-\sum_{k=1}^N{a_{k-1}^2}+\sum_{k=1}^Na_ka_{k-1}}{\sum_{k=1}^N{a_{k-1}^2}}\right)\\
        &=\frac{1}{dt}\ln\left(1+\frac{\sum_{k=1}^N{a_{k-1}}\left(a_k-a_{k-1}\right)}{\sum_{k=1}^N{a_{k-1}^2}}\right)=\frac{1}{dt}\ln\left(1+\frac{\sum_{k=1}^N{a_{k-1}}\frac{a_k-a_{k-1}}{dt}dt}{\sum_{k=1}^N{a_{k-1}^2}}\right).
    \end{split}
\end{equation*}
As $N\to \infty$ the denominator approaches  infinity whereas the numerator is finite. We thus use Taylor's series for the $\ln$ function to get
\begin{equation*}
    \begin{split}
        \tilde{\mu} &=\frac{\sum_{k=1}^N{a_{k-1}}\frac{a_k-a_{k-1}}{dt}dt}{\sum_{k=1}^N{a_{k-1}^2}dt}=\frac{\sum_{k=1}^N{a(t_k-dt)}\cdot \frac{a(t_k)-a(t_k-dt)}{dt}dt}{\sum_{k=1}^N{a_{k-1}^2}dt}.
    \end{split}
\end{equation*}
Taking the limit $dt\to 0$ for the above expression yields
$$ \frac{\int_0^{T_{ext}}a(t)a'(t)dt}{\int_0^{T_{ext}}a^2(t)dt}.
$$
Substituting $a(t)$ by the decay profile Eq. \eqref{eq:decayProfile} we obtain 
\begin{equation*}
    \begin{split}
        \tilde{\mu} &=\frac{\frac{1}{2}a^2(t)\eval_0^{T_{ext}}}{\frac{1}{\frac{2}{2-p}+1}\frac{1}{\lambda(2-p)}[0-1]}=\frac{\frac{1}{2}}{\frac{1}{\frac{2}{2-p}+1}\frac{1}{\lambda(2-p)}}=\lambda \frac{4-p}{2}.
    \end{split}
\end{equation*}
\end{proof}

\begin{remark}[Continuous reconstruction from the explicit scheme]
One can reach similar results as in Lemma \ref{lem:ConRec} by taking the explicit scheme  \eqref{eq:homoESPflow} to the limit $dt\to 0$. Here the  recurrence relation between $a_{k}$ and $a_{k+1}$ is Eq. \eqref{eq:akrecurr}. The eigenvalue can be expressed by,
\begin{equation*}
    \begin{split}
        \tilde{\mu}&=\frac{\ln{\left(\mu\right)}}{dt} = 
        \frac{1}{dt}\ln\left(\frac{\sum_{k=1}^Na_ka_{k-1}}{\sum_{k=1}^N{a_{k-1}^2}}\right)\\
        &= \frac{1}{dt}\ln\left(\frac{\sum_{k=1}^{N}a_{k-1}^2+\lambda\sum_{k=1}^{N}\abs{a_{k-1}}^p dt}{\sum_{k=1}^N{a_{k-1}^2}}\right)
        = \frac{1}{dt}\ln\left(1+\lambda dt\frac{\sum_{k=1}^{N}\abs{a_{k-1}}^p }{\sum_{k=1}^N{a_{k-1}^2}}\right)
    \end{split}
\end{equation*}
Taking the limit $dt \to 0$, we have
\begin{equation*}
    \begin{split}
        \lim_{dt\to 0}\frac{\sum_{k=1}^{N}\abs{a_{k-1}}^p  }{\sum_{k=1}^N{a_{k-1}^2}}&=\lim_{dt\to 0}\frac{\sum_{k=1}^{N}\abs{a_{k-1}}^p dt }{\sum_{k=1}^N{a_{k-1}^2}dt}=\frac{\int_0^{T_{ext}} a(t)^p dt}{\int_0^{T_{ext}} a(t)^2 dt}
        =\frac{\frac{1}{\frac{p}{2-p}+1}}{\frac{1}{\frac{2}{2-p}+1}}=\frac{4-p}{2}.
    \end{split}
\end{equation*}
Using Taylor's series for $\ln(1+x)$ we get
\begin{equation*}
    \tilde{\mu} = \lim_{dt\to 0} \frac{1}{dt}\ln\left(1+\lambda dt\frac{4-p}{2}\right)=  \lambda\frac{4-p}{2}.
\end{equation*}

\end{remark}

We show now that though $ERR_{DMD}$ approaches zero as $dt \to 0$, \ac{DMD} does not reconstruct the dynamics correctly and the reconstruction error, $ERR_{Rec}$, is positive. 
It implies that in certain cases, neither increasing the sampling density nor increasing the sub-space  dimensionality improves the recovery of the dynamics. We refer to it as  \emph{the \ac{DMD} paradox}. This is formalized in the following theorem. 
\newpage 
\begin{theorem}[The \ac{DMD} paradox]\label{theo:DMDPara}
    Let the conditions of Lemma \ref{lem:ConRec} hold. 
    \begin{enumerate}
        \item If the dimensionality is one, $r=1$, and, $N\to \infty$, then $ERR_{DMD}\to 0$ (Eq. \ref{eq:DMDERR}), however, the reconstruction error (Eq. \eqref{eq:DMDreconConti}) $ERR_{Rec}^c\ge B$, where
        \begin{equation*}
            B=-\norm{f}^2\frac{1}{\lambda(4-p)}\left[1- \sqrt[]{1-e^{-\frac{4-p}{2-p}}}\right]^2>0.
        \end{equation*}
        \item One cannot reduce  $ERR_{Rec}^c$ by increasing the dimensionality, $r>1$.
    \end{enumerate}
\end{theorem}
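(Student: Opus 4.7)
The plan is to combine the closed form from Lemma \ref{lem:inconsistencyDMD} with the limiting eigenvalue of Lemma \ref{lem:ConRec}. For the $ERR_{DMD}\to 0$ half of part 1, I would exploit optimality of $\mu$: since $\mu$ minimises $\|f\|^2\sum_{k=1}^N(a_k-\mu a_{k-1})^2$, it suffices to upper-bound the same expression at the non-optimal choice $\mu=1$. Monotonicity of the decay profile $a$ on $[0,T_{ext}]$ gives $\sum_k|a_k-a_{k-1}|=a_0-a_N\le 1$, and uniform continuity of $a$ on the compact interval yields $\max_k|a_k-a_{k-1}|\to 0$ as $dt\to 0$; together these drive $\sum_k(a_k-a_{k-1})^2\to 0$, hence $ERR_{DMD}\to 0$.

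For the reconstruction lower bound, Lemma \ref{lem:ConRec} supplies the continuous reconstruction $\tilde\psi(t)=f\, e^{\tilde\mu t}$ with $\tilde\mu=\lambda(4-p)/2<0$, while the true solution is $\psi(t)=f\cdot a(t)$ extended by zero past the extinction time $T_{ext}=-1/(\lambda(2-p))$. I would restrict the continuous-time error to the sub-interval $[0,T_{ext}]$ and apply the reverse triangle inequality in $L^2([0,T_{ext}])$:
\begin{equation*}
ERR_{Rec}^c \;\ge\; \|f\|^2 \int_0^{T_{ext}}\!\!\left(a(t)-e^{\tilde\mu t}\right)^2 dt \;\ge\; \|f\|^2\left(\|a\|_{L^2([0,T_{ext}])}-\|e^{\tilde\mu\,\cdot}\|_{L^2([0,T_{ext}])}\right)^{\!2}.
\end{equation*}
The substitution $u=1+(2-p)\lambda t$ yields $\|a\|^2_{L^2([0,T_{ext}])}=-1/(\lambda(4-p))$, and using $2\tilde\mu T_{ext}=-(4-p)/(2-p)$ gives $\|e^{\tilde\mu\,\cdot}\|^2_{L^2([0,T_{ext}])}=(1-e^{-(4-p)/(2-p)})/(-\lambda(4-p))$. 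Substituting these into the bound above reproduces $B$ verbatim, and strict positivity follows from $e^{-(4-p)/(2-p)}<1$ for $p\in[0,2)$.

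Part 2 is structural rather than analytic: both data matrices $\Psi_0^{N-1}$ and $\Psi_1^N$ are rank-one outer products of $f$ with $\bm{a}_0^{N-1}$ and $\bm{a}_1^N$, so $\Sigma$ has exactly one non-zero singular value. For any $r>1$ the truncated $\Sigma_r$ is singular and the DMD matrix $F$ does not exist, as stated at the end of Lemma \ref{lem:inconsistencyDMD}; hence $r=1$ is the only admissible dimensionality and the bound from part 1 cannot be circumvented by enlarging $r$. The main subtlety in the whole argument is the choice of integration domain for the reverse triangle inequality: over $[0,\infty)$ one finds the embarrassing coincidence $\|a\|_{L^2([0,\infty))}=\|e^{\tilde\mu\,\cdot}\|_{L^2([0,\infty))}=\sqrt{-1/(\lambda(4-p))}$, which collapses the bound to zero; restricting to $[0,T_{ext}]$ breaks this equality of norms and produces the explicit constant $(1-\sqrt{1-e^{-(4-p)/(2-p)}})^2$ appearing in $B$.
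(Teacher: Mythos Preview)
Your proposal is correct and, for the reconstruction lower bound and for part~2, essentially coincides with the paper's argument: the paper expands $\int_0^{T_{ext}}(a-e^{\tilde\mu t})^2\,dt$ into three integrals and bounds the cross term by Cauchy--Schwarz, which is exactly your reverse triangle inequality in $L^2([0,T_{ext}])$, and the two explicit integrals are computed the same way. Part~2 in the paper is also disposed of by invoking the rank-one structure and Lemma~\ref{lem:inconsistencyDMD}, as you do.

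The one genuine methodological difference is in showing $ERR_{DMD}\to 0$. The paper works directly with the closed form $\|\bm a_1^N\|^2-\langle\bm a_1^N,\bm a_0^{N-1}\rangle^2/\|\bm a_0^{N-1}\|^2$, rewrites it as a telescoping term plus a cross term, and passes each to a Riemann integral limit as $dt\to 0$. Your route---upper-bounding the optimal value by the value at the suboptimal $\mu=1$, then controlling $\sum_k(a_k-a_{k-1})^2\le\max_k|a_k-a_{k-1}|\cdot\sum_k|a_k-a_{k-1}|\le\max_k|a_k-a_{k-1}|$ via monotonicity and uniform continuity---is shorter, avoids the Riemann-sum bookkeeping entirely, and uses only qualitative properties of $a$ (monotone, continuous on a compact interval) rather than its explicit form. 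The paper's computation, on the other hand, makes the individual limits of the three pieces visible. Your final remark about the coincidence $\|a\|_{L^2(0,\infty)}=\|e^{\tilde\mu\cdot}\|_{L^2(0,\infty)}$ is a nice observation that is absent from the paper and explains \emph{why} the window $[0,T_{ext}]$ is the right one for this bound.
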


\begin{proof} $\quad$
\begin{enumerate}
    \item Following Lemma \ref{lem:ConRec} we have 
    \begin{equation*}
        \begin{split}
            ERR_{DMD}&=\norm{\bm{a}_1^{N}}^2-\frac{\inp{{\bm{a}_1^N}}{\bm{a}_0^{N-1}}^2}{\norm{\bm{a}_0^{N-1}}^2}=\norm{\bm{a}_1^{N}}^2-\frac{\inp{{\bm{a}_1^N}-\bm{a}_0^{N-1}+\bm{a}_0^{N-1}}{\bm{a}_0^{N-1}}^2}{\norm{\bm{a}_0^{N-1}}^2}\\
            &=\norm{\bm{a}_1^{N}}^2-\frac{\left(\inp{{\bm{a}_1^N}-\bm{a}_0^{N-1}}{\bm{a}_0^{N-1}}+\inp{\bm{a}_0^{N-1}}{\bm{a}_0^{N-1}}\right)^2}{\norm{\bm{a}_0^{N-1}}^2}\\
            &=\left(\norm{\bm{a}_1^{N}}^2-\norm{\bm{a}_0^{N-1}}^2\right)-2
            \inp{{\bm{a}_1^N}-\bm{a}_0^{N-1}}{\bm{a}_0^{N-1}}-\frac{\inp{{\bm{a}_1^N}-\bm{a}_0^{N-1}}{\bm{a}_0^{N-1}}^2}{\norm{\bm{a}_0^{N-1}}^2}.
        \end{split}
    \end{equation*}
    We can now calculate the limit of each term as $N \to \infty$. For the first term,
    \begin{equation*}
        \begin{split}
            \lim_{N\to \infty}\left(\norm{\bm{a}_1^{N}}^2-\norm{\bm{a}_0^{N-1}}^2\right) = \lim_{N\to \infty}\left(\sum_{k=1}^Na_k^2-\sum_{k=0}^{N-1}a_k^2\right)=a_0^2=1.
        \end{split}
    \end{equation*}
    For the second term,
    \begin{equation*}
        \begin{split}
            \lim_{N\to \infty}\left(\inp{{\bm{a}_1^N}-\bm{a}_0^{N-1}}{\bm{a}_0^{N-1}}\right)&=\lim_{N\to \infty}\left(\sum_{k=1}^{N}(a_k-a_{k-1})a_{k-1}\right)\\
            &=\lim_{dt\to 0}\int_0^{T_{ext}}\frac{a(t+dt)-a(dt)}{dt}a(t)dt\\
            &=\lim_{dt\to 0}\int_0^{T_{ext}}\frac{d}{dt}\{a(t)\}a(t)dt=\frac{a^2(t)}{2}\eval_0^{T_{ext}}=\frac{1}{2}.
        \end{split}
    \end{equation*}
    For the third term, the inner product in the numerator equals $1/2$ in the limit (as just calculated for the second term). The denominator approaches $\infty$, therefore this term is zero in the limit and we get  
    $ERR_{DMD}= 1-2\frac{1}{2}-0=0$.
    
    For the reconstruction error, we have
    $\tilde{\mu}=\lambda(4-p)/2$ and according to Lemma \ref{lem:inconsistencyDMD} the reconstructed dynamics is
\begin{equation*}
    \tilde{\psi}(t)=f\cdot  e^{\tilde{\mu}t}=f\cdot e^{\frac{\lambda(4-p)}{2}t}.
\end{equation*}
The reconstruction error \eqref{eq:DMDreconConti} is 
\begin{equation*}
    ERR_{Rec}^c= \norm{f}^2\int_0^{T_{ext}} \left[a(t)-\hat{a}(t)\right]^2dt.
\end{equation*}
Using the expressions for $a(t)$ and $\hat{a}(t)$ we get
\begin{equation*}
\begin{split}
ERR_{Rec}^c &= \norm{f}^2\int_0^{T_{ext}} \left[a(t)-\hat{a}(t)\right]^2dt\\
&=\norm{f}^2\int_0^{T_{ext}} \left[\left[(1+\lambda(2-p)t)^+\right]^{\frac{1}{2-p}}- e^{\lambda\frac{4-p}{2}t}\right]^2dt\\
&=\norm{f}^2\int_0^{T_{ext}} (1+\lambda(2-p)t)^{\frac{2}{2-p}}dt
+\int_0^{T_{ext}} e^{\lambda(4-p)t}dt\\
&\qquad -2\int_0^{T_{ext}} (1+\lambda(2-p)t)^{\frac{1}{2-p}} e^{\lambda\frac{4-p}{2}t}dt\\
&\ge\norm{f}^2\left[-\frac{1}{\lambda(4-p)}+ \frac{1}{\lambda(4-p)}\left(e^{-\frac{4-p}{2-p}}-1\right)\right]\\
&\qquad-2\norm{f}^2\sqrt[]{\int_0^{T_{ext}} (1+\lambda(2-p)t)^{\frac{2}{2-p}}dt} \sqrt[]{\int_0^{T_{ext}} e^{\lambda(4-p)t}dt}\\
&=\norm{f}^2\left[\sqrt[]{-\frac{1}{\lambda(4-p)}}- \sqrt[]{\frac{1}{\lambda(4-p)}\left(e^{-\frac{4-p}{2-p}}-1\right)}\right]^2\\
&=-\norm{f}^2\frac{1}{\lambda(4-p)}\left[1- \sqrt[]{1-e^{-\frac{4-p}{2-p}}}\right]^2>0.
\end{split}
\end{equation*}
\item According to Lemma \ref{lem:inconsistencyDMD}, we do not obtain solutions for $r>1$.
\end{enumerate}
$\quad$
\end{proof}

\paragraph{Error in extinction time} Other than the inherent error in the decay profile, the extinction time cannot be restored with classical \ac{DMD}. The difference between the analytic decay profile and the approximated exponential function is shown in Fig. \ref{Fig:errorDMDEF}. 
\begin{figure}[phtb]
\centering 
\includegraphics[trim=0 0 0 0, clip,width=0.5\textwidth]{./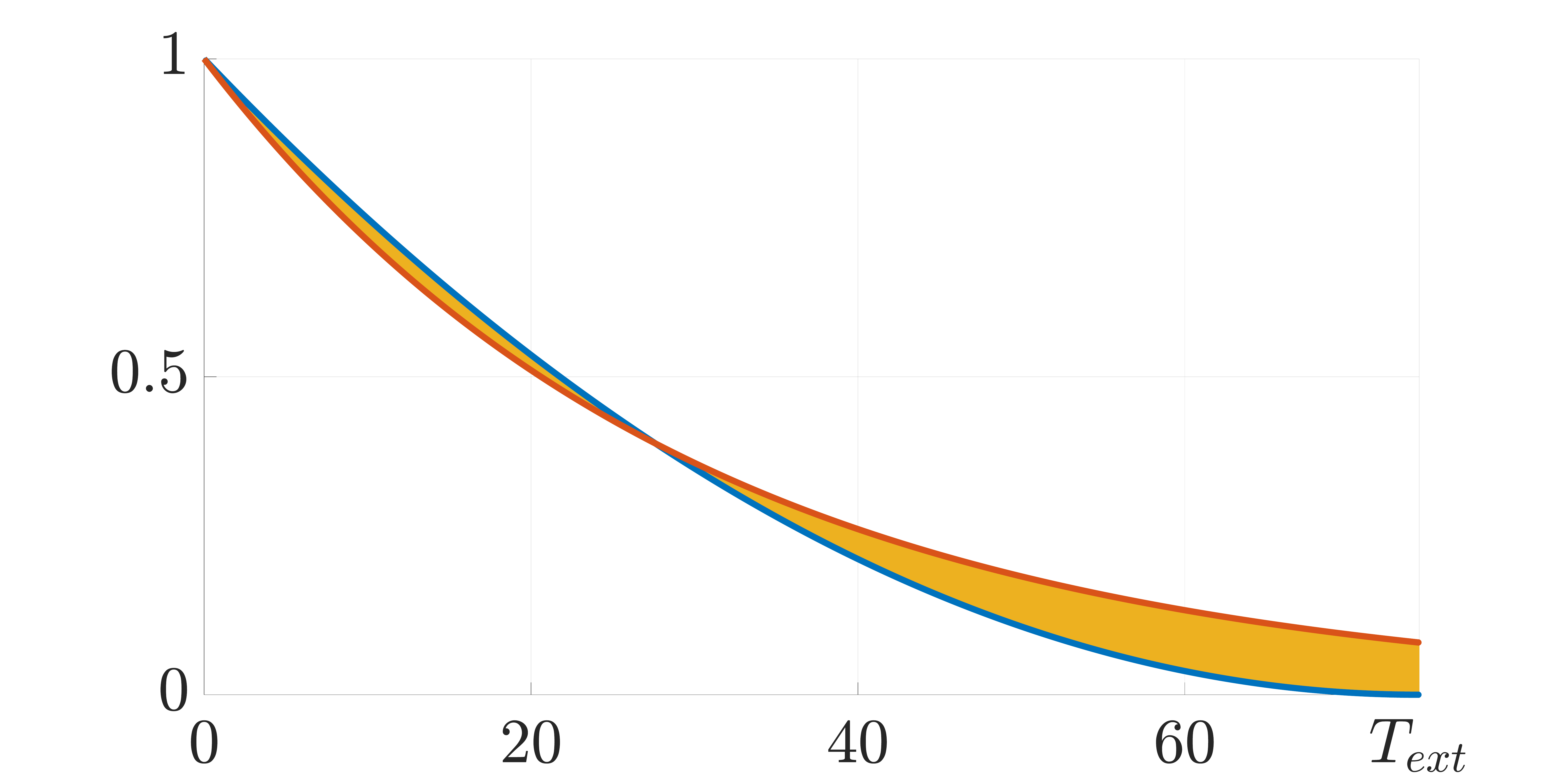}
\caption{{\bf{The \ac{DMD} paradox.}} The blue line is the polynomial decay. The red line depicts the closest exponential function in the sense of $ERR_{DMD}$  \eqref{eq:DMDERR}. Though  $ERR_{DMD} \to 0$ the reconstruction error, $ERR_{Rec}^c$  \eqref{eq:DMDreconConti}, (the orange area) is not.}
\label{Fig:errorDMDEF}
\end{figure}

\begin{corollary}[Properties of the time-continuous setting]
\begin{enumerate}
    \item As $dt \to 0$ the eigenvalue $\mu\to 1$ (coincides with Eq. \eqref{eq:akrecurr}), $a_k \to 1$ for all $k$, and $ERR_{DMD} \to 0$.
    \item For $p=2$, $\tilde{\mu}=\lambda$, as expected in the linear case.
    \item To solve this paradox the vectors $\bm{a}_0^{N-1}$ and $\bm{a}_1^{N}$ must be co-linear. Therefore, the recurrence relating $a_{k+1}$ to $a_k$ should be geometric. 
\end{enumerate}
\end{corollary}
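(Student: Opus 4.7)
The plan is to treat the three parts of the corollary separately, since each one reduces to a short post-processing of formulas already derived in Lemma~\ref{lem:inconsistencyDMD}, Lemma~\ref{lem:ConRec}, and Theorem~\ref{theo:DMDPara}.

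For the first claim I would start from the closed form $\mu = \inp{\bm{a}_1^N}{\bm{a}_0^{N-1}}/\norm{\bm{a}_0^{N-1}}^2$ in Lemma~\ref{lem:inconsistencyDMD} and rewrite the numerator using the explicit-scheme recurrence \eqref{eq:akrecurr}, namely $a_k = a_{k-1} + a_{k-1}|a_{k-1}|^{p-2}\lambda\,dt$. This gives $\inp{\bm{a}_1^N}{\bm{a}_0^{N-1}} = \norm{\bm{a}_0^{N-1}}^2 + \lambda\,dt \sum_{k} |a_{k-1}|^p$, so that $\mu = 1 + \lambda\,dt \cdot \sum_k |a_{k-1}|^p / \norm{\bm{a}_0^{N-1}}^2$, from which $\mu \to 1$ as $dt\to 0$ is immediate and coincides with the limit of \eqref{eq:akrecurr}. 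The companion claim $a_k \to 1$ follows, for indices in a bounded range, from $a_k - a_{k-1} = O(dt)$ together with $a_0 = 1$. The final assertion $ERR_{DMD}\to 0$ is exactly the computation already carried out inside the proof of Theorem~\ref{theo:DMDPara}, so I would simply cite it.

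For the second claim there is essentially nothing to prove beyond substituting $p = 2$ into the formula $\tilde\mu = \lambda(4-p)/2$ of Lemma~\ref{lem:ConRec}. This yields $\tilde\mu = \lambda$, which is the correct eigenvalue for the linear gradient flow $\psi_t = \lambda\psi$, whose solution is genuinely exponential and is exactly recovered by \ac{DMD}. For the third claim I would appeal directly to Cauchy--Schwarz: the error formula \eqref{eq:lemmaSecondResult} can be rewritten as $ERR_{DMD} = (\norm{\bm{a}_1^N}^2\norm{\bm{a}_0^{N-1}}^2 - \inp{\bm{a}_1^N}{\bm{a}_0^{N-1}}^2)/\norm{\bm{a}_0^{N-1}}^2$, so it vanishes if and only if Cauchy--Schwarz holds with equality, i.e.\ if and only if $\bm{a}_1^N$ and $\bm{a}_0^{N-1}$ are co-linear. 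Writing the co-linearity componentwise yields a constant ratio $a_{k+1}/a_k = c$ for all $k$, which is precisely the geometric-recurrence condition.

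The only genuine subtlety, and the step I would most carefully word, is an interpretational one in part~(1). As $dt\to 0$ the paper implicitly keeps the extinction time fixed and lets $N\to \infty$ accordingly, so the sample values $a_k$ actually sweep the whole decay from $1$ down to $0$; the phrase \emph{``$a_k \to 1$ for all $k$''} must therefore be read as a statement about any bounded range of indices rather than uniformly in $k$. I would add a short clarifying remark to that effect. No other step presents any real obstacle.
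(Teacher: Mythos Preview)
Your proposal is correct and matches the paper's (implicit) approach: the corollary is stated in the paper without a separate proof, being an immediate consequence of Lemma~\ref{lem:inconsistencyDMD}, Lemma~\ref{lem:ConRec}, and Theorem~\ref{theo:DMDPara}, and you assemble exactly those ingredients in the natural way. Your Cauchy--Schwarz reading of part~(3) and your clarifying remark on the meaning of ``$a_k\to 1$ for all $k$'' in part~(1) are both apt and worth keeping.
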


The third part of the corollary implies the \ac{DMD} paradox can be solved by sampling the data non-uniformly. For example, we can sample the dynamics at time points
\begin{equation}\label{eq:adapSampling}
    t_k = \frac{\abs{1-\delta}^{k(2-p)}}{\lambda(2-p)}-\frac{1}{\lambda(2-p)}.
\end{equation}
With this time sampling policy the solution, Eq. \eqref{eq:SpF}, gets the geometric decay form
\begin{equation*}
    \psi_k=a_k\cdot f,\quad a_k=\abs{1-\delta}^k.
\end{equation*}
Evolving  the explicit scheme  \eqref{eq:homoESPflow} with the adaptive step size 
\begin{equation}\label{eq:adapStepSizeClose}
    dt_k=-\frac{\inp{P(\psi_k)}{\psi_k}}{\norm{P(\psi_k)}^2}\delta,
\end{equation} 
yields the solution as in \eqref{eq:seriesAdaptive},
\begin{equation}\label{eq:ESEFsolution}
    \psi_k=a_k\cdot f,\quad a_k=(1-\delta)^k.
\end{equation}
In both cases, the solution converges when $\delta\in(0,2)$ and they are identical when $\delta\in(0,1]$.

\begin{theorem}[Zero reconstruction error for time-rescaled \ac{DMD}]\label{theo:EFadaptiveDMD} 
For the following two cases:
\begin{enumerate}
\item A time-continuous homogeneous flow  \eqref{eq:homoFlow} initialized with an eigenfunction and sampled at time-points as in Eq. \eqref{eq:adapSampling}.
\item An explicit scheme of a homogeneous flow \eqref{eq:homoESPflow} initialized with an eigenfunction with step-size $dt_k$ as in \eqref{eq:adaptiveStepSize}. 
\end{enumerate}
Applying \ac{DMD} perfectly reconstructs the  flow, $ERR^d_{Rec}=0$.
\end{theorem}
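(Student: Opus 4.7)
The plan is to show that in both scenarios the sampled trajectory has the \emph{same} simple structure, namely $\bm{\psi}_k = a_k\cdot f$ with $a_k$ a geometric sequence, and then to invoke Lemma~\ref{lem:inconsistencyDMD} to obtain the DMD spectrum and mode in closed form. Since the data matrix is still rank one but now the two snapshot matrices are \emph{column-wise proportional}, the single DMD pair $(\mu,\phi)$ produced by the algorithm will exactly satisfy $\mu^k\alpha\phi = \psi_k$ for every $k$, giving $ERR^d_{Rec}=0$.

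First I would reduce each of the two cases to the geometric form. For case~(1), I would plug the prescribed sampling times \eqref{eq:adapSampling} into the analytic decay profile \eqref{eq:decayProfile}: the term $1+\lambda(2-p)t_k$ collapses to $|1-\delta|^{k(2-p)}$, and raising to $1/(2-p)$ yields $a_k = |1-\delta|^k$, hence $\bm{\psi}_k = |1-\delta|^k f$. For case~(2), the result is already available as \eqref{eq:seriesAdaptive}, namely $a_k=(1-\delta)^k$, giving $\bm{\psi}_k=(1-\delta)^k f$. In both subcases the crucial property is that $\bm{a}_1^{N}$ and $\bm{a}_0^{N-1}$ are proportional, with proportionality constant $(1-\delta)$ or $|1-\delta|$, as anticipated by point~3 of the corollary following Theorem~\ref{theo:DMDPara}.

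Next, I would feed this into Lemma~\ref{lem:inconsistencyDMD}. Because of the column proportionality,
\begin{equation*}
\mu = \frac{\inp{\bm{a}_1^N}{\bm{a}_0^{N-1}}}{\norm{\bm{a}_0^{N-1}}^2} = 1-\delta \quad (\text{resp. } |1-\delta|),
\end{equation*}
and the lemma simultaneously gives $\phi = f/\norm{f}$ and $\alpha = \norm{f}$. Moreover, the DMD error expression \eqref{eq:lemmaSecondResult} evaluates to zero by Cauchy--Schwarz becoming an equality under colinearity. Substituting into the reconstruction formula \eqref{eq:modeSpectrumCoo} yields
\begin{equation*}
\tilde{\bm{\psi}}_k = \alpha\mu^k\phi = \norm{f}\,(1-\delta)^k\,\frac{f}{\norm{f}} = a_k f = \bm{\psi}_k,
\end{equation*}
for every $k=0,\dots,N$, so the discrete reconstruction error \eqref{eq:reconErr} is identically zero.

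The only subtlety I foresee is the sign/absolute-value discrepancy when $\delta\in(1,2)$: the adaptive-sampling scheme \eqref{eq:adapSampling} uses $|1-\delta|$ (forcing real, monotone $t_k$), while the explicit-step scheme \eqref{eq:adaptiveStepSize} naturally produces the alternating-sign sequence $(1-\delta)^k$. This does not affect the argument, because DMD operates over the reals with a potentially negative $\mu$, and the colinearity of $\bm{a}_0^{N-1}$ and $\bm{a}_1^{N}$ is preserved in either case. It is worth remarking that the two schemes coincide on $\delta\in(0,1]$, as already noted in the preceding discussion, so the sign issue is genuinely confined to $\delta\in(1,2)$ and is handled uniformly by the above computation.
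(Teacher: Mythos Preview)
Your proposal is correct and follows essentially the same route as the paper's proof: both reduce the sampled trajectory to a rank-one geometric form $\bm{\psi}_k=c^k f$, invoke Lemma~\ref{lem:inconsistencyDMD} to read off $\mu=c$, $\phi=f/\norm{f}$, $\alpha=\norm{f}$, and then check that the single-mode reconstruction reproduces every snapshot exactly. Your write-up is in fact somewhat more explicit than the paper's (you spell out the colinearity/Cauchy--Schwarz argument for $ERR_{DMD}=0$ and the sign issue for $\delta\in(1,2)$), but the underlying idea is identical.
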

\begin{proof} We prove this theorem for the explicit scheme. The proof for adaptive sampling (the first case) is similar, by replacing $(1-\delta)$ with $\abs{1-\delta}$.

The solution of the explicit scheme, following \eqref{eq:ESEFsolution}, is 
\begin{equation*}
    {\bm{\psi_k}}=(1-\delta)^k\cdot f.
\end{equation*}
Consequently, if there is a linear mapping, $A$, from ${\bm{\psi_{k-1}}}$ to ${\bm{\psi_{k}}}$ then the initial condition, $\psi_0=f$, should be its right eigenvector and the corresponding eigenvalue is $1-\delta$. In addition, the requirement for $A$ is to be with minimal rank which is one in this case. Therefore, the linear mapping $A$ is

\begin{equation*}
    A=(1-\delta)\frac{1}{\norm{f}^2}f\cdot f^T.
\end{equation*}
The solution of \ac{DMD} is given in Lemma \ref{lem:inconsistencyDMD} when the eigenvalue is
\begin{equation*}
    \mu=1-\delta.
\end{equation*}
With this solution both \ac{DMD} and reconstruction errors are zero.

$\quad$
\end{proof}

\begin{remark}
One can conclude that the data must exponentially decay to be precisely reconstructed by \ac{DMD}. The decay profile is a result of the homogeneity of the system. Therefore, not only linear systems can be precisely reconstructed, but also one homogeneous ones. And, the adaptive step size policy can be understood as a homogeneity normalization. Namely, this policy mimics a flow derived by a one-homogeneous operator, with a fixed time step.
\end{remark}

\subsection{Proposed general time re-scaling} 
Following the insights gained by the above analysis for the case of initialization with eigenfunctions, we  formulate a general scheme, which applies to any initial condition. Naturally, we cannot expect that a linear approximation as \ac{DMD} will maintain a zero reconstruction error in the general case. However, the proposed solution models much better the nonlinear dynamics. We require that for the case of a single eigenfunction, we obtain the solutions stated in the previous section, yielding perfect reconstruction.

\subsubsection{Prior time re-scaling}
\paragraph{Continuous setting} We introduce first the time re-scaling in the continuous time setting. Specifically, the original flow is factorized by the functional $\lambda_{\psi}^{-1}$, 
\begin{equation}\label{eq:NHF}\tag{\bf TRC}
    \psi_t=G(\psi)=\lambda_{\psi}^{-1}\cdot P(\psi),\quad \psi(0)=f,
\end{equation}
where
\begin{equation}\label{eq:RayleighQuotient}
    \lambda_{\psi}^{-1}=-\inp{P (\psi)}{\psi}/\norm{P (\psi)}^2.
\end{equation}
Note that an eigenfunction of $P$ is an eigenfunction of $G$ with the corresponding eigenvalue $1$. The factorization term $\lambda_{\psi}$ can be viewed as a generalized Rayleigh quotient, as discussed, for instance, in \cite{cohen2018energy}.

\begin{theorem}[Convergence of \eqref{eq:NHF}]\label{theo:decayAndConvergence}
    Let $R(\psi)$ be a convex functional, $-P(\psi)$ be the gradient of $R(\psi)$, and $\psi(t)$ be the solution of \eqref{eq:NHF}. Then,
    \begin{enumerate}
        \item If the zero function belongs to the kernel of the functional $R$ then $R(\psi)$ converges to zero exponentially.
        \item If $P$ is a homogeneous operator and the initial condition, $f$, is an eigenfunction of $P$ (and it is not trivial) the solution is $\psi(t)=f\cdot e^{-t}$.
    \end{enumerate}
\end{theorem}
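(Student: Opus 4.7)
The plan is to prove the two parts by distinct arguments: part (1) by differentiating $R$ along the flow and combining with convexity of $R$ to derive a Gronwall inequality, and part (2) by directly verifying the separable ansatz $\psi(t)=a(t)f$ using the $(p-1)$-homogeneity of $P$.

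For part (1), I would first compute $\tfrac{d}{dt}R(\psi)$ along \eqref{eq:NHF}, using $\partial R=-P$:
\begin{equation*}
\frac{d}{dt}R(\psi(t)) \;=\; \inp{\partial R(\psi)}{\psi_t} \;=\; -\inp{P(\psi)}{\lambda_\psi^{-1}P(\psi)} \;=\; -\lambda_\psi^{-1}\,\|P(\psi)\|^2.
\end{equation*}
Substituting \eqref{eq:RayleighQuotient} causes the rescaling factor to collapse exactly, yielding $\tfrac{d}{dt}R(\psi)=\inp{P(\psi)}{\psi}$. This is no accident: the re-parametrization was tuned precisely so that the work done against the gradient matches a Rayleigh-type quotient. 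Next, convexity of $R$ together with $R(0)=0$ gives the subgradient inequality
\begin{equation*}
0 \;=\; R(0) \;\ge\; R(\psi) + \inp{\partial R(\psi)}{0-\psi} \;=\; R(\psi) + \inp{P(\psi)}{\psi},
\end{equation*}
so $\inp{P(\psi)}{\psi}\le -R(\psi)$. Combining the two identities yields $\tfrac{d}{dt}R(\psi(t))\le -R(\psi(t))$, and a standard Gronwall argument (multiply by $e^{t}$) produces $R(\psi(t))\le R(f)\,e^{-t}$, establishing exponential decay.

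For part (2), I would substitute the ansatz $\psi(t)=a(t)f$ into \eqref{eq:NHF} and check consistency. Using homogeneity \eqref{eq:homo} and $P(f)=\lambda f$, I get $P(af)=a|a|^{p-2}\lambda f$, from which
\begin{equation*}
\|P(af)\|^2 \;=\; |a|^{2(p-1)}\lambda^2\,\|f\|^2, \qquad \inp{P(af)}{af} \;=\; |a|^{p}\,\lambda\,\|f\|^2,
\end{equation*}
so $\lambda_\psi^{-1}=-|a|^{2-p}/\lambda$. Plugging back into $G$ yields
\begin{equation*}
G(\psi) \;=\; \lambda_\psi^{-1}P(\psi) \;=\; -\frac{|a|^{2-p}}{\lambda}\cdot a|a|^{p-2}\lambda\,f \;=\; -a(t)\,f.
\end{equation*}
Matching with $\psi_t=a'(t)f$ reduces the PDE to the scalar ODE $a'(t)=-a(t)$, $a(0)=1$, whose unique solution is $a(t)=e^{-t}$, so $\psi(t)=f\,e^{-t}$. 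Uniqueness of the flow then pins this down as the only solution.

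The main obstacle I anticipate is technical and lives in part (1): justifying the chain rule $\tfrac{d}{dt}R(\psi(t))=\inp{\partial R(\psi)}{\psi_t}$ when $R$ is merely convex (hence only subdifferentiable), together with well-posedness of \eqref{eq:NHF} itself, since $\lambda_\psi^{-1}$ is singular on the set $\{P(\psi)=0\}$. This likely requires invoking an appropriate notion of solution for convex-gradient flows (e.g.\ Brezis' strong solution theory) and arguing that trajectories avoid the degenerate set until the natural terminal time. Part (2) is essentially a direct check; the one subtlety is that $\lambda\ne 0$ is needed for $\lambda_\psi^{-1}$ to be finite, which is subsumed by the ``not trivial'' hypothesis on $f$, since $\lambda=0$ would force $P(f)=0$ and make $f$ a stationary point of the flow from the outset.
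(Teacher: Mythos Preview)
Your proposal is correct and follows essentially the same approach as the paper. Part (1) is identical: compute $\tfrac{d}{dt}R(\psi)=\inp{P(\psi)}{\psi}$ via the (Brezis) chain rule, combine with the convexity inequality $R(\psi)\le -\inp{P(\psi)}{\psi}$ (from $R(0)=0$), and apply Gr\"onwall. For part (2) the paper is slightly more conceptual---it simply notes that $G$ is one-homogeneous with $G(f)=-f$ and invokes the known decay profile for one-homogeneous flows from \cite{cohen2020Introducing}---whereas you carry out the separable-ansatz computation explicitly; your version is more self-contained but amounts to the same argument.
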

The proof is in Appendix \ref{sec:proofTheoConv}.


\paragraph{Discrete setting}
The discrete time re-scaling is done by plugging $\lambda^{-1}_{\psi_k} \delta$ as the adaptive step size of the explicit scheme, 
\begin{equation}\label{eq:NHFES}\tag{\bf TRD}
    \psi_{k+1}=\psi_k-P(\psi_k)\cdot\lambda^{-1}_{\psi_k} \delta,\quad \psi_0=f,\delta\in\mathbb{R}.
\end{equation}
This explicit scheme was studied in \cite{cohen2019stable}. The flow is proven to converge to the steady state exponentially under this scheme when $\delta\in(0,2)$. 

This adaptation is possible if we know the operator in advance and we can control the step size as well, which is not always the case. In what follows, we suggest two ways of time re-scaling when the data is already sampled. We propose ways to re-scale the time axis also in cases when the operator or step size are not known.

\subsubsection{Posterior time re-scaling}

To adapt arbitrary data snapshots to our time-rescaled \ac{DMD} we first associate a datum to a certain time point. Then, we interpolate the data and sample at the appropriate time points.
Let us denote by  $dt_k$ the original step size, representing the time difference between sample ${\bm{\psi_k}}$ and sample ${\bm{\psi_{k+1}}}$. The point at time, associated with ${\bm{\psi_k}}$ is $t_k=\sum_{i=0}^{k-1}dt_i$. We rescale the time axis such that $dt_k$ is mapped to $\tilde{dt}_k$ and accordingly $t_k$ to $\tilde{t}_k=\sum_{i=0}^{k-1}\tilde{dt}_i$.
%
%
We do that by reformulating the explicit scheme of the original flow, i.e.
\begin{equation*}
    \begin{split}
        \psi_{k+1}&=\psi_k+P(\psi_k)\cdot dt_k\\
        &=\psi_k-P(\psi_k)\cdot\frac{\inp{P(\psi_k)}{\psi_k}}{\norm{P(\psi_k)}^2} \cdot \left(-\frac{\norm{P(\psi_k)}^2}{\inp{P(\psi_k)}{\psi_k}}dt_k\right).
    \end{split}
\end{equation*}
Thus, the following mapping is obtained
\begin{equation}\label{eq:rescaleHB}
    \tilde{dt}_k = -\frac{\norm{P(\psi_k)}^2}{\inp{P(\psi_k)}{\psi_k}}dt_k
,\quad \tilde{t}_k=\sum_{i=0}^{k-1}\tilde{dt}_i=-\sum_{i=0}^{k-1}\frac{\norm{P(\psi_i)}^2}{\inp{P(\psi_i)}{\psi_i}}dt_k.
\end{equation}

\paragraph{Blind time sampling and flow}
When $P$ and $dt$ are unknown, the step size rescaling, Eq. \eqref{eq:rescaleHB}, can be reformulated as
\begin{equation*}
    \begin{split}
            \tilde{dt}_k &= -\frac{\norm{P(\psi_k)}^2}{\inp{P(\psi_k)}{\psi_k}}dt_k
            =-\frac{\norm{P(\psi_k)}^2}{\inp{P(\psi_k)}{\psi_k}}\frac{dt_k^2}{dt_k}
            =-\frac{\norm{P(\psi_k)\cdot dt_k}^2}{\inp{P(\psi_k)\cdot dt_k}{\psi_k}}.
    \end{split}
\end{equation*}
Since $P(\psi_k)\cdot dt_k = \psi_{k+1}-\psi_{k}$ we get
\begin{equation}\label{eq:rescalingB}
    \tilde{dt}_k = -\frac{\norm{\psi_{k+1}-\psi_k}^2}{\inp{\psi_{k+1}-\psi_k}{\psi_k}},\quad \tilde{t}_k=\sum_{i=0}^{k-1}\tilde{dt}_i=-\sum_{i=0}^{k-1}\frac{\norm{\psi_{i+1}-\psi_{i}}^2}{\inp{\psi_{i+1}-\psi_{i}}{\psi_i}}.
\end{equation}
Our data is now framed within a proper time-rescale, $\{{\bm{\psi_k}},\tilde{t}_k\}_{k=0}^N$. In order to make it suitable for \ac{DMD}, we interpolate the data and sample it with a fixed step size (for instance, by linear interpolation).
We note that when the operator is known (non-blind case), naturally the estimations are better and we obtain less reconstruction errors, as our experiments show in the Results section. 
We note that previous works have proposed non-uniform data sampling
(e.g. \cite{gueniat2015dynamic, leroux2016dynamic}). However, the context and motivation are different.


\begin{remark}
The time rescaling, Eq. \eqref{eq:NHF}, can be seen as \emph{homogeneity normalization}. The new operator $G(\cdot)$ is one-homogeneous for any homogeneity order of $P$, thus, it decays exponentially. In addition, the factorization term, $\lambda^{-1}_\psi$, is not unique to change the homogeneity of the flow to one. It can be any $(2-p)$-homogeneous functional. For example, the term $\norm{\psi}^{2-p}$ changes the homogeneity to one but the convergence (Theorem \ref{theo:decayAndConvergence}) is not clear. 
\end{remark}

\subsection{\ac{DMD} for symmetric operators}
According to Theorem \ref{theo:decayAndConvergence}, one can conclude that a homogeneous functional decays exponentially under the adaptive step size policy. 
It was discussed in \cite{cohen2019stable} that the adaptive step size flow can yield negative eigenvalues but not complex. Thus, it is natural to constrain the \ac{DMD} matrix to be symmetric and real.
This requirement coincides with the analytic expression of nonlinear diffusion in \cite[chapter~3.4]{weickert1998anisotropic}. In this monograph, Weickert investigates the following nonlinear PDE for the continuous and semi-discrete settings,
\begin{equation*}
    \psi_t=\div{\left(D(\nabla \psi)\nabla \psi\right)},    
\end{equation*}
where $D$ is a tensor $\mathbb{R}^{2\times 2}\to \mathbb{R}^{2\times 2}$. Weickert shows that the above flow can be discretized by 
\begin{equation*}
    \psi_{k+1}=\mathcal{A}(\psi_k)\psi_k,
\end{equation*}
where $\mathcal{A}(\cdot)$ is symmetric. Note that this evolution is not linear and the operator $\mathcal{A}$ change with iterations.
However, the \ac{DMD} matrix, $F$ (Algo. \ref{algo:DMD}, State \ref{state:DMDmatrix}), is not limited to be symmetric. If $F$ is symmetric, it can be written as 
\begin{equation} \label{eq:DMDconstraints}
    F=B^TB,
\end{equation}
where $B$ belongs to $\mathbb{C}^{r \times r}$. Then, we can embed this requirement in the standard \ac{DMD} (see Appendix \ref{sec:LRUC}). 
We refer to this as \acl{SDMD} (Algo. \ref{algo:DMD_forced}). This algorithm is identical to the classic \ac{DMD} other than calculating $F$. Restricting $F$ to be symmetric is equivalent to solving
\begin{equation*}
    FXX^T+XX^TF = XY^T+YX^T.
\end{equation*}
For more details we refer the reader to Appendix \ref{sec:LRUC}.

\begin{algorithm} \caption{\acf{SDMD} 
}
\begin{algorithmic}[1]
		\Inputs{Data sequence $\{\bm{\psi_k}\}_0^{N+1}$}.
		\State Repeat the steps detailed in Algorithm \ref{algo:DMD}. The fifth step is changed to: 
		 Find the optimal linear mapping, $F$, between $X$ and $Y$ in the sense of 
		\begin{equation*}
		    \min_{F}\norm{Y-FX}^2_F,\quad s.t.\,\, F=B^TB.
		\end{equation*}
		Equivalently, solve the Sylvester equation 
		\begin{equation*}
            FXX^T+XX^TF = XY^T+YX^T.
        \end{equation*}
		Algorithms for solving this are given in Appendix \ref{sec:LRUC}.
		\Outputs{$$\{\mu_i,\bm{\phi_i},\alpha_i\}_1^r$$}.
    \end{algorithmic}
    \label{algo:DMD_forced}
\end{algorithm}

\subsection{Modes as nonlinear eigenfunctions}
In what follows, we examine the modes of applying \ac{DMD} on the time-rescaled snapshots, as explained above. We attempt to draw a relation between the modes and the eigenfunctions of the operator $P$.  Let us recall, first, the definition of the modes $\{\bm{\phi_i}\}$
\begin{equation*}
    \Phi = \begin{bmatrix}
    \bm{\phi_1}&\cdots&\bm{\phi_r}
    \end{bmatrix}=U_r\begin{bmatrix}
    \bm{w_1}&\cdots&\bm{w_r}
    \end{bmatrix}=U_r\cdot W,
\end{equation*}
where $\{\bm{w_i}\}_{i=1}^r$ is the eigenvector set of the \ac{DMD} matrix, $F$. The mode set $\{\phi_i\}_{i=1}^r$, is an orthonormal set,
\begin{equation*}
    \inp{\phi_i}{\phi_j}=\phi_i^*\phi_j=\left(U_r \bf{w_i}\right)^*U_r {\bf{w_j}}={\bf{w_i}}^*U_r^*U_r {\bf{w_j}}={\bf{w_i}}^*{\bf{w_j}}=\delta_{i,j},
\end{equation*}
where $\delta_{i,j}$ is the Kronecker delta.
In general, the dynamical system reconstruction with \ac{DMD} is given by (Eqs. \eqref{eq:linearApproxA} and \eqref{eq:modeSpectrumCoo})
\begin{equation*}
    A = U_r\cdot F\cdot U_r^*.
\end{equation*}
The modes are eigenvectors of the dynamics matrix, $A$
\begin{equation*}
    A\cdot {\bm{\phi_i}} = U_r\cdot F\cdot U_r^*\cdot U_r{\bm{w_i}}=U_r\cdot F\cdot {\bm{w_i}}=\mu_i U_r {\bm{w_i}}=\mu_i {\bm{\phi_i}}.
\end{equation*}
The eigenvalues correspond to those of the \ac{DMD} matrix.
We focus on the adaptive step-size explicit scheme \eqref{eq:NHFES}. The linear system defined by $A$ is its approximation. We can thus write
\begin{equation*}
    \psi_{k+1}\approx A\psi_{k}.
\end{equation*}
Using \eqref{eq:NHFES} for  $\psi_{k+1}$ we get
\begin{equation*}
    A\psi_{k}\approx \psi_k-\frac{\inp{P(\psi_k)}{P(\psi_k)}}{\norm{P(\psi_k)}^2}P(\psi_k)\delta.
\end{equation*}
This approximation is valid for any input, in particular for a mode $\phi$, 
\begin{equation*}
    \mu\phi=A\phi\approx \phi-\frac{\inp{P(\phi)}{\phi}}{\norm{P( \phi)}^2} P(\phi)\cdot \delta.
\end{equation*}
Rearranging this equation yields
\begin{equation}\label{eq:phiPrelation}
    P(\phi)\approx \left[\frac{1-\mu}{\delta}\frac{\norm{P( \phi)}^2}{\inp{P(\phi)}{\phi}}\right]\phi.
\end{equation}
The expression in the brackets is a (real) number. 
Thus, we can conclude that \emph{the orthonormal mode set  approximates a set of nonlinear eigenfunctions of the operator $P$.} This introduces an interesting new relation between \ac{DMD} and nonlinear spectral theory \cite{book2008nonlinearSpectral,gilboa2018nonlinear}.

\subsection{Eigenvalue evaluation}
The interpretation of the nonlinear eigenvalue, $\lambda$, is twofold. The first, the eigenvalue is the value of the generalized Rayleigh quotient at a local extremum. The second, the eigenvalue dictates the decay profile as well as the extinction time. Thus, the nonlinear eigenvalues can be approximated accordingly.

\paragraph{Generalized Rayleigh quotient} An eigenvalue approximation according to the eigenvector is straightforward. The approximated relation between the operator $P$ and the eigenvector $\phi$ is defined in Eq. \eqref{eq:phiPrelation}. Thus, the nonlinear eigenvalue is the coefficient of the mode $\phi$, i.e.
\begin{equation}\label{eq:laPhi}
    \lambda_{\phi} = \frac{1-\mu}{\delta}\frac{\norm{P(\phi)}^2}{\inp{P(\phi)}{\phi}}.
\end{equation}
The subscript denotes that the eigenvalue is based on the eigenvector $\phi$.
Notice that if the initial condition $f$ is an eigenfunction, the solution of \ac{DMD} is given by Eq. \eqref{eq:lemmaFirstResult}. Then, the eigenvalue, $\mu$, is equal to $1-\delta$ and the nonlinear eigenvalue, $\lambda_\phi$, is equal to the generalized Rayleigh quotient, which is precisely the eigenvalue $\lambda$.


\paragraph{Decay profile} As discussed above, the adaptive step size policy causes the flow to decay similarly to a one homogeneous flow. Thus, we can compare between the decay profiles. We compare the decay profile function at time $t_k$, $a(t_k)$ to the exponential attenuation by the eigenvalue $\mu^k$. We propose here to compute the  eigenvalue by minimizing,
\begin{equation}\label{eq:optimizationLam}
    E(\lambda) = \sum_{k=0}^N\left(a(t_k)^
        {2-p}-\mu^{k(2-p)}\right)^2= \sum_{k=0}^N\left[1+\lambda_\mu(2-p)t_k-\mu^{k(2-p)}\right]^2.
\end{equation}
Solving $\partial_\lambda E(\lambda)=0$ yields
\begin{equation}\label{eq:laMu}
    \lambda_{\mu}=\frac{\sum_{k=0}^N\mu^{k(2-p)}t_k - \sum_{k=0}^Nt_k}{(2-p)\sum_{k=0}^Nt_k^2}.
\end{equation}
The subscript denotes that the eigenvalue is based on $\mu$. 
Note that, if the flow is initialized with an eigenfunction and sampled at time points $t_k$ as in Eq. \eqref{eq:adapSampling} then $\mu=\abs{1-\delta}$. Using \eqref{eq:optimizationLam} we can solve for $E(\lambda)$ by, 
\begin{equation*}
\begin{split}
    E(\lambda) &= \sum_{k=0}^N\left[1+\lambda_\mu(2-p)t_k-\mu^{k(2-p)}\right]^2\\
    &=\sum_{k=0}^N\left[1+\lambda_\mu(2-p)\left(\frac{\abs{1-\delta}^{k(2-p)}}{\lambda(2-p)}-\frac{1}{\lambda(2-p)}\right)-\abs{1-\delta}^{k(2-p)}\right]^2\\
    &=\sum_{k=0}^N\left[1-\frac{\lambda_\mu}{\lambda}+\frac{\lambda_\mu}{\lambda}\abs{1-\delta}^{k(2-p)}-\abs{1-\delta}^{k(2-p)}\right]^2\\
    &=\sum_{k=0}^N\left[\left(1-\frac{\lambda_\mu}{\lambda}\right)+\left(\frac{\lambda_\mu}{\lambda}-1\right)\abs{1-\delta}^{k(2-p)}\right]^2\\
    &=\left(1-\frac{\lambda_\mu}{\lambda}\right)^2\sum_{k=0}^N\left[1-\abs{1-\delta}^{k(2-p)}\right]^2
\end{split}
\end{equation*}
Therefore, the optimal $\lambda_\mu$ is $\lambda$.

\subsection{\acf{OrthoNS}}\label{sec:pDecoUseDMD} 
We now summarize the above results and methods into a simple coherent analysis and synthesis framework for homogeneous gradient flows, based on \ac{DMD}.
Two versions of the mode decomposition are given in Algorithms \ref{algo:OrthoNS} and \ref{algo:BlindOrthoNS}.
We distinguish between two scenarios. The first, when the operator is known and the step size is controllable. In that case, we refer the reader to Algorithm \ref{algo:OrthoNS}.
\begin{algorithm}[phtb!] \caption{\acl{OrthoNS}}
\begin{algorithmic}[1]
        \Inputs{Given a dynamical system Eq. \eqref{eq:homoFlow}}
        \State \label{state:DataGathring}Evolve the solution of Eq. \eqref{eq:homoFlow} explicitly by Eq. \eqref{eq:homoESPflow} or sample the dynamics, where the step size $dt_k$ is given by
		\begin{equation*}
		    dt_k=\frac{\inp{P(\psi_k)}{\psi_k}}{\norm{P(\psi_k)}^2}\cdot \delta, \quad \delta \in (0,2).
		\end{equation*}
		And we get the data sequence, $\{{\bm{\psi_k}}\}_{k=0}^{N}$, homogeneously normalized.
		\State \label{state:SDMD}Apply the \acl{SDMD} Algorithm \ref{algo:DMD_forced}. 
		The result is $\{\mu_i,{\bm{\phi_i},\alpha_i}\}$.
		\State \label{State:nonlinearEigenvalue}Relate the eigenvalues $\{\mu_i\}_{i=1}^r$ or the modes $\{{\bm{\phi_i}}\}_{i=1}^r$ to the nonlinear eigenvalues $\{\lambda_i\}_{i=1}^r$ with Eqs. \eqref{eq:laPhi} or \eqref{eq:laMu}, accordingly.

		\Outputs{$$\{\lambda_i,\bm{\phi_i},\alpha_i\}_1^r$$}
    \end{algorithmic}
    \label{algo:OrthoNS}
\end{algorithm}
For a data provided beforehand the posterior rescaling is called for, detailed in Algorithm \ref{algo:BlindOrthoNS}.

\begin{algorithm}[phtb!] \caption{Posterior \acl{OrthoNS}}
\begin{algorithmic}[1]
    \Inputs{Given the data sequence
    $\{{\bm{\phi_k}}\}_{k=0}^N$.}
    \If{The operator and the sample times are known}
    \State Rescale the time axis according to Eq. \eqref{eq:rescaleHB}.
    \Else
    \State Rescale the time axis according to Eq. \eqref{eq:rescalingB}.
    \EndIf
    	\State \label{state:BlindDataGathring}Interpolate the data according to the new time axis at fixed step size. Then, we get a new sequence of data, homogeneously normalized, $\{{\bm{\bar{\psi}_k}}\}_{k=0}^N$.
		\State \label{state:BlindSDMD}Apply the \acl{SDMD} Algorithm \ref{algo:DMD_forced}. 
		The result is $\{\mu_i,{\bm{\phi_i}},\alpha_i\}$.
		\State \label{State:BlindnonlinearEigenvalue}Relate the eigenvalues $\{\mu_i\}_{i=1}^r$ or the modes $\{{\bm{\phi_i}}\}_{i=1}^r$ to the nonlinear eigenvalues $\{\lambda_i\}_{i=1}^r$ with Eqs. \eqref{eq:laPhi} or \eqref{eq:laMu}, accordingly.
		\Outputs{$$\{\lambda_i,\bm{\phi_i},\alpha_i\}_1^r$$}
    \end{algorithmic}
    \label{algo:BlindOrthoNS}
\end{algorithm}

\paragraph{A dynamical system reconstruction}
Here we propose a simple alternative reconstruction of the flow, which can be expressed analytically using the modes. Each mode is related to an eigenfunction with its corresponding eigenvalue and decay profile.  A linear approximation of the flow \eqref{eq:homoFlow} is expressed as a weighted summation of the orthonormal modes,
\begin{equation}\label{eq:approPFlowSolution}
    \hat{\psi}(t)=\sum_{i=1}^r\alpha_i\phi_i\left[\left(1+\lambda_i(2-p)t\right)^+\right]^{\frac{1}{2-p}}.
\end{equation}

\begin{definition}[\acf{OrthoNS}]\label{def:dispDec}
The \ac{OrthoNS} of an image $f$ and a homogneous operator $P$ is the set  $\{\phi_i,T_i,\alpha_i\}_{i=1}^r$ (Algorithms \ref{algo:OrthoNS}, \ref{algo:BlindOrthoNS}).
\end{definition}
With this definition, we can reconstruct the initial condition, $f$, as $\hat{f}=\hat{\psi}(0)=\sum_i \alpha_i\phi_i$. The error, $\norm{f-\hat{f}}$, depends on the dimensionality, $r$ \cite{gavish2014optimal}.

\begin{theorem}[Parseval Identity]
    The \ac{OrthoNS} admits the Parseval's identity with respect to $\hat{f}$.
\end{theorem}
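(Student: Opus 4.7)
The plan is to exploit the orthonormality of the mode set $\{\bm{\phi_i}\}_{i=1}^r$ already established earlier in the paper, where it was shown that
\[
\inp{\bm{\phi_i}}{\bm{\phi_j}} = {\bm{w_i}}^*U_r^*U_r\bm{w_j} = {\bm{w_i}}^*\bm{w_j} = \delta_{i,j},
\]
since $U_r$ has orthonormal columns and $W$ can be taken orthonormal (this is where the \ac{SDMD} step matters: symmetric $F$ admits a real orthonormal eigenbasis, so $\{\bm{w_i}\}$ is orthonormal).

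First I would write out $\hat{f} = \hat{\psi}(0) = \sum_{i=1}^r \alpha_i \bm{\phi_i}$, using that the decay profile evaluated at $t=0$ equals one. Then I would expand the squared norm bilinearly:
\[
\|\hat{f}\|^2 = \inp{\sum_{i=1}^r \alpha_i \bm{\phi_i}}{\sum_{j=1}^r \alpha_j \bm{\phi_j}} = \sum_{i=1}^r \sum_{j=1}^r \alpha_i \alpha_j \inp{\bm{\phi_i}}{\bm{\phi_j}} = \sum_{i=1}^r \alpha_i^2,
\]
where the last equality follows from the orthonormality recalled above. This is precisely the Parseval identity with respect to $\hat{f}$.

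The only subtlety, and the one step that is not purely mechanical, is ensuring that the coefficients $\alpha_i = \bm{w_i}^* U_r^* \bm{\psi_0}$ are the honest expansion coefficients of $\hat{f}$ in the basis $\{\bm{\phi_i}\}$; this follows by computing $\inp{\hat{f}}{\bm{\phi_j}} = \sum_i \alpha_i \inp{\bm{\phi_i}}{\bm{\phi_j}} = \alpha_j$, which is consistent with the definition since $\alpha_j = \bm{w_j}^* U_r^* \bm{\psi_0}$ is exactly the projection of $\bm{\psi_0}$ onto $\bm{\phi_j}$ restricted to the $r$-dimensional subspace spanned by $U_r$. There is no real obstacle here; the identity is a direct consequence of orthonormality, which was guaranteed by our choice of \ac{SDMD}, and this is what makes the symmetry constraint from the previous subsection pay off in the analysis/synthesis picture.
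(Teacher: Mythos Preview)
Your proposal is correct and follows essentially the same approach as the paper: expand $\hat{f}=\sum_i \alpha_i\bm{\phi_i}$, compute $\|\hat{f}\|^2$ bilinearly, and collapse the double sum using $\inp{\bm{\phi_i}}{\bm{\phi_j}}=\delta_{i,j}$. The paper's own proof is exactly this one-line computation; your additional remarks about why the $\bm{\phi_i}$ are orthonormal (via \ac{SDMD}) and why the $\alpha_i$ are the genuine expansion coefficients are correct elaborations but are not needed beyond what the paper already established.
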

\begin{proof}
\begin{equation*}
    \begin{split}
        \norm{\hat{f}}^2=\hat{f}^T\hat{f}=\left(\sum_{i=1}^r\alpha_i \phi_i\right)^T \sum_{j=1}^r\alpha_j \phi_j=\sum_{i=1}^r\sum_{j=1}^{r}\alpha_i\alpha_j\inp{\phi_i}{\phi_j}=\sum_{i=1}^r\alpha_i^2
    \end{split}
\end{equation*}
\end{proof}

\begin{definition}[Spectrum]\label{def:dispSpe}
The \ac{OrthoNS} spectrum of a function  $f$ is the set $\{T_i, \abs{\alpha_i}^2\}_{i=1}^r$, where $T_i$ and $\alpha_i$ are the extinction time and the coefficient of the mode $\phi_i$, respectively.
\end{definition}

\begin{definition}[Filtering]\label{def:dispFil}
Given a filter $h\in \mathbb{R}^r$, the \ac{OrthoNS} filtering is,
\begin{equation}\label{eq:dispFil}
    f_h=\sum_{i=1}^r{\bm{\phi_i}}{\alpha}_ih_i.
\end{equation}
This yields an amplification or attenuation of the modes.
\end{definition}

We note that the coefficients $\{\alpha_i\}_{i=1}^r$ are optimal with respect to the initial condition, $f$. Hence, the reconstruction accuracy is excellent near $t \approx 0$ but may detoriorate as time increases (as seen in our experiments). We currently examine ways to improve this.

\section{Results}\label{sec:results}
In this section we show numerical implementations of the theory presented above. We choose the operator $P$ to be the $p$-Laplacian operator where $p\in(1,2)$, and assume Neumann boundary conditions. We follow the numerical implementation as detailed in  \cite{cohen2020Introducing,cohen2019stable}. The eigenvectors presented here were generated numerically by the algorithm of \cite{cohen2018energy}.
In this section, we show results of the (posterior) time rescaling 
(Sec. \ref{subsec:ResHN}); the dynamical system reconstruction (Sec. \ref{subsec:ResDSR}); and image analysis and processing with \ac{OrthoNS} (Sec. \ref{subsec:SAP}). We compare the running time of \ac{OrthoNS} and nonlinear $p$-decomposition \cite{cohen2020Introducing} as a function of image size in Sec. \ref{subsec:ResRT}. Finally, we examine the robustness to noise of \ac{SDMD} compared to \ac{DMD} \cite{schmid2010dynamic}, tls\ac{DMD} \cite{hemati2017biasing} and fb\ac{DMD} \cite{dawson2016characterizing} in Appendix \ref{sec:SDMDResults}. All experiments were run on an i7-8700k CPU machine @ 3.70 GHz, 64 GB RAM.

\subsection{\ac{DMD} with time rescaling}\label{subsec:ResHN}
We demonstrate the time rescaling techniques and show quantitative results of Theorem \ref{theo:EFadaptiveDMD},
along results of Algorithms \ref{algo:OrthoNS} and \ref{algo:BlindOrthoNS}.
In Tables \ref{tab:HN} and \ref{tab:BHN} we show results of an experiment in which
the $p-$Laplacian flow \eqref{eq:pFlow} is evolved, initialized with an eigenfunction,  $p=1.5,\,\lambda = -0.0269,\, \norm{f}^2=249.1$, $T_{ext}=74.3$. We set $r=20$ and, as expected, the system is close to singularity. We list the five most significant modes sorted according to their coefficients. In both cases, a single significant mode is obtained which perfectly matches the parameters of the initial condition (power, eigenvalue and extinction time).
\begin{table}[phtb!]
\begin{minipage}[phtb!]{0.475\textwidth}
\centering
 \begin{tabular}{||c c c||} 
 \hline
 $\alpha^2$ 
 & T & $\lambda$ \\ [0.5ex] 
 \hline\hline
      249.1
      & 74.3&  -2.69e-2\\
 \hline
 5.8e-07
 & 0.2& -1e1 \\
 \hline
 8.2e-17
 & 0.3e-3& -7.22e3\\
 \hline
4.9e-23
&2.6e-10& -7.56e9\\
\hline
3.2e-23
& 5.8e-10& -3.44e9\\ [1ex] 
\hline
\end{tabular}
\caption{{\bf{\ac{OrthoNS} of an eigenfunction.}} The result of Algo. \ref{algo:OrthoNS}, $P$ is the $p-$Laplacian and $f$ is an eigenfunction.}
\label{tab:HN}
\end{minipage}
$\quad$
\begin{minipage}[phtb!]{0.475\textwidth}
\centering
 \begin{tabular}{||c c c||} 
 \hline
 $\alpha^2$ 
 & T & $\lambda$ \\ [0.5ex] 
 \hline\hline
    249.1 
    &  74.3& -2.69e-2\\
 \hline
 5.8e-07
 & 0.2& -1e1 \\
 \hline
 2.2e-17
 & 0.2e-3& -1.15e4\\
 \hline
5e-23
& 2.6e-10& -7.74e9\\
\hline
4.7e-23
&  4.8e-09& -4.13e8\\ [1ex] 
\hline
\end{tabular}
\caption{{\bf{Posterior \ac{OrthoNS} of an eigenfunction.}} The result of Algo. \ref{algo:BlindOrthoNS} for the same case as in Table \ref{tab:HN}.}
\label{tab:BHN}
\end{minipage}
\end{table}

\subsection{\ac{OrthoNS} and dynamical system reconstruction}\label{subsec:ResDSR}
We examine the main modes and the reconstruction (Eq. \eqref{eq:approPFlowSolution}) on a simple 1D pulse signal. The experiment is repeated twice, with $p=1.01$ and with  $p=1.5$. 
In both cases, the dimensionality is set to $r=5$.
In Fig. \ref{fig:modes101} the initial condition reconstruction and the \ac{OrthoNS} modes (Algorith \ref{algo:OrthoNS}) are shown for $p=1.01$. 
\begin{figure}[phtb!]
    \centering
    \includegraphics[trim=350 0 300 0, clip,width=1\textwidth,valign=c]{./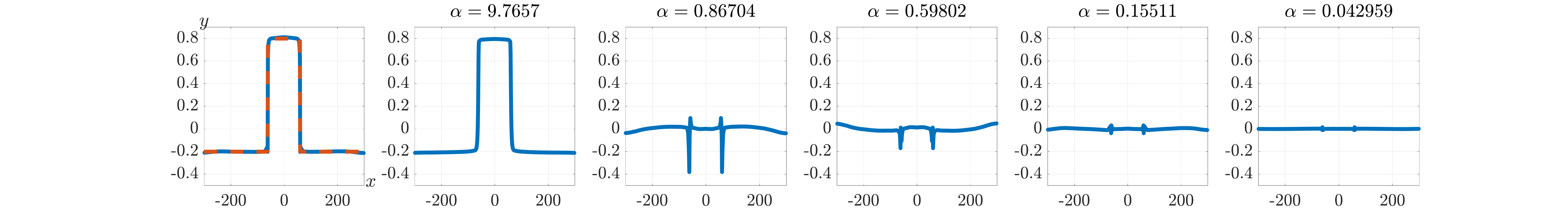}
    \caption{{\bf{\ac{OrthoNS}} of a pulse}, when $p=1.01$ and $r=5$. Left plot, the pulse is in red, the reconstruction is blue. From the second left to the right, the five main \ac{OrthoNS} modes $\phi_i$, sorted according to $\alpha$.}
    \label{fig:modes101}
\end{figure}
On the left, the initial condition is in red (dashed) and its reconstruction is in blue. The five plots on the right are the modes, in decreasing order with respect to $\alpha$ (their ``power'').
A pulse is very close to an eigenfunction of the $p$-Laplacian for $p\to 1$ (total variation). As expected, the first mode is very dominant, containing most of the signal's power.

In Fig. \ref{fig:modes3D101} we illustrate how each mode evolves separately, according to its decay profile, based on the nonlinear eigenvalue $\lambda_{\mu}$ (Eq. \eqref{eq:laMu}). Fig. \ref{fig:flowRecDiffp101} shows the ground truth flow, an approximation of the flow by a linear combination of the modes (Eq. \eqref{eq:approPFlowSolution}), and the difference between them.

\begin{figure}[phtb!]
    \centering
    \includegraphics[trim=350 0 300 0, clip,width=1\textwidth,valign=c]{./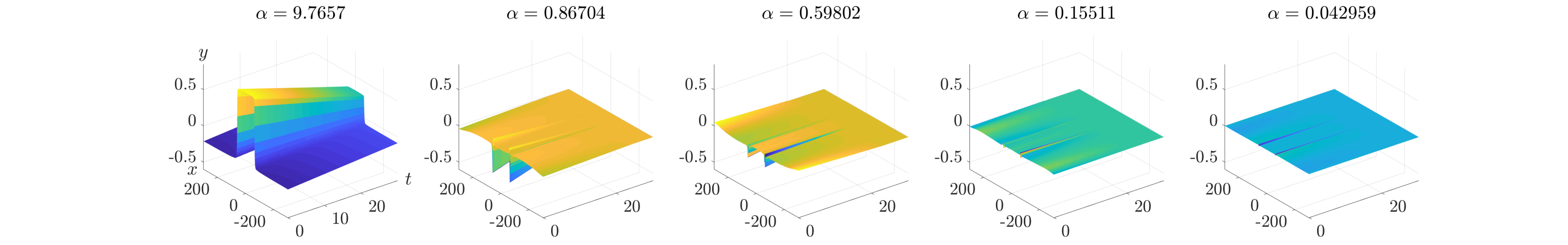}
\label{subfig:modes3D101}
    \caption{{\bf{Dynamics reconstruction.}} The change of the modes (from Fig. \ref{fig:modes101}) over the time according to the approximated decay profile, Eq. \eqref{eq:approPFlowSolution}. Five significant modes for pulse smoothing when $p=1.01$.}
    \label{fig:modes3D101}
\end{figure}

\begin{figure}[phtb!]
\centering
\captionsetup[subfigure]{justification=centering}
\subfloat[Diffusion $p=1.01$]{
\includegraphics[trim=0 0 0 0, clip,width=0.33\textwidth,valign=c]{./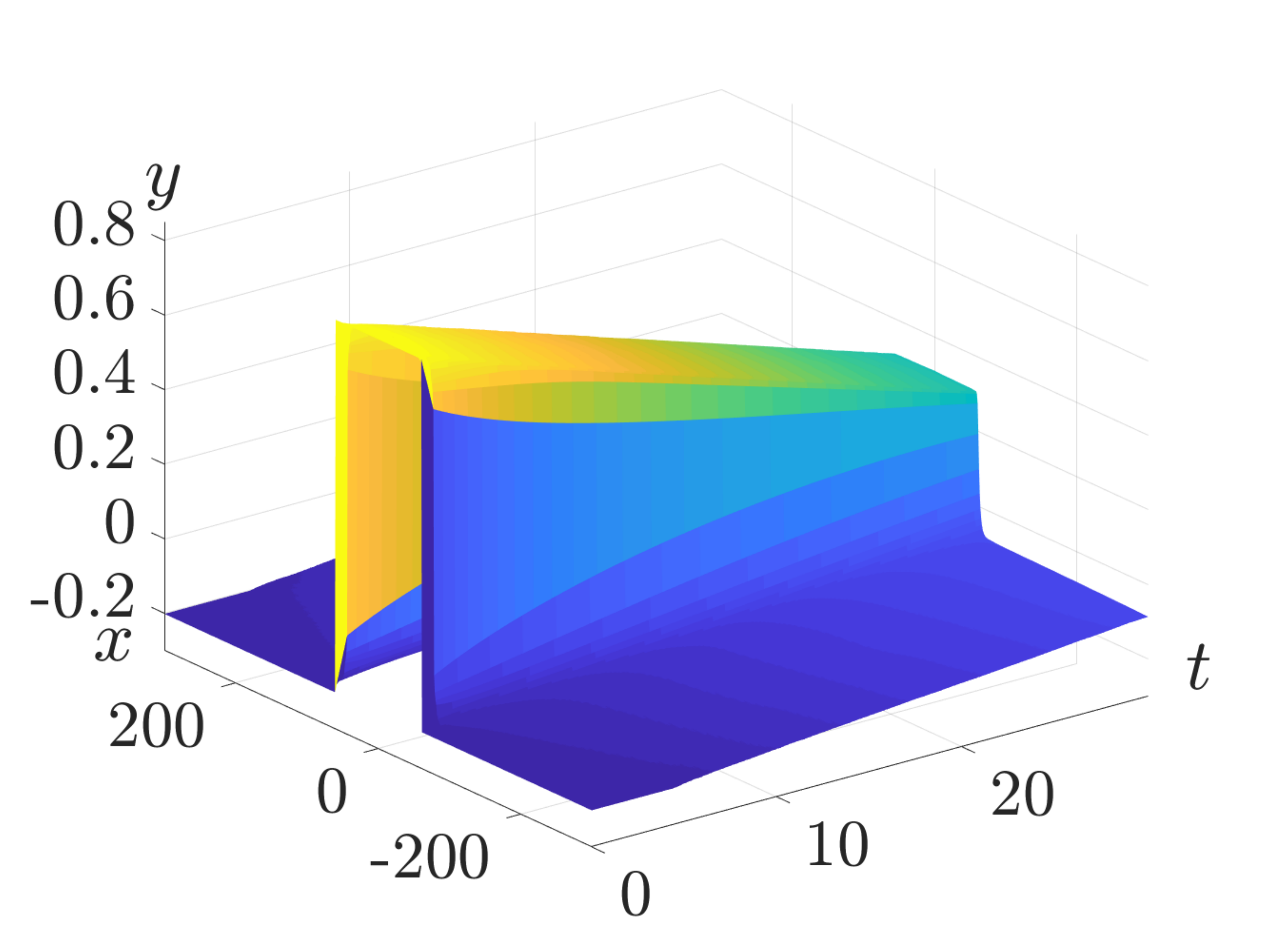}
    \label{subfig:uT101}
    }
    \subfloat[Reconstructed flow $p=1.01$]{
    \includegraphics[trim=0 0 0 0, clip,width=0.33\textwidth,valign=c]{./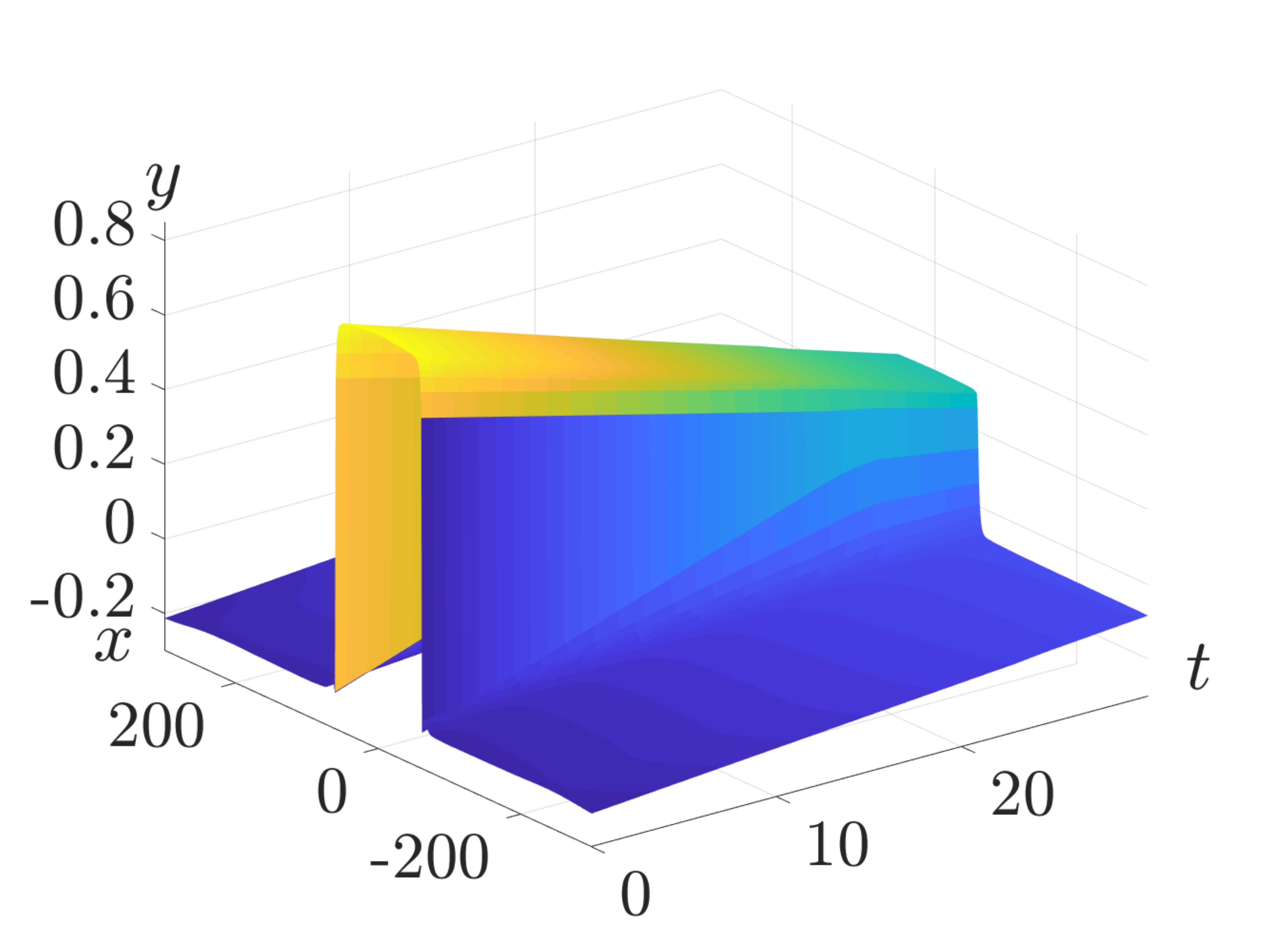}
    \label{subfig:uTcons101}
    }
    \subfloat[Difference]{
    \includegraphics[trim=0 0 0 0, clip,width=0.33\textwidth,valign=c]{./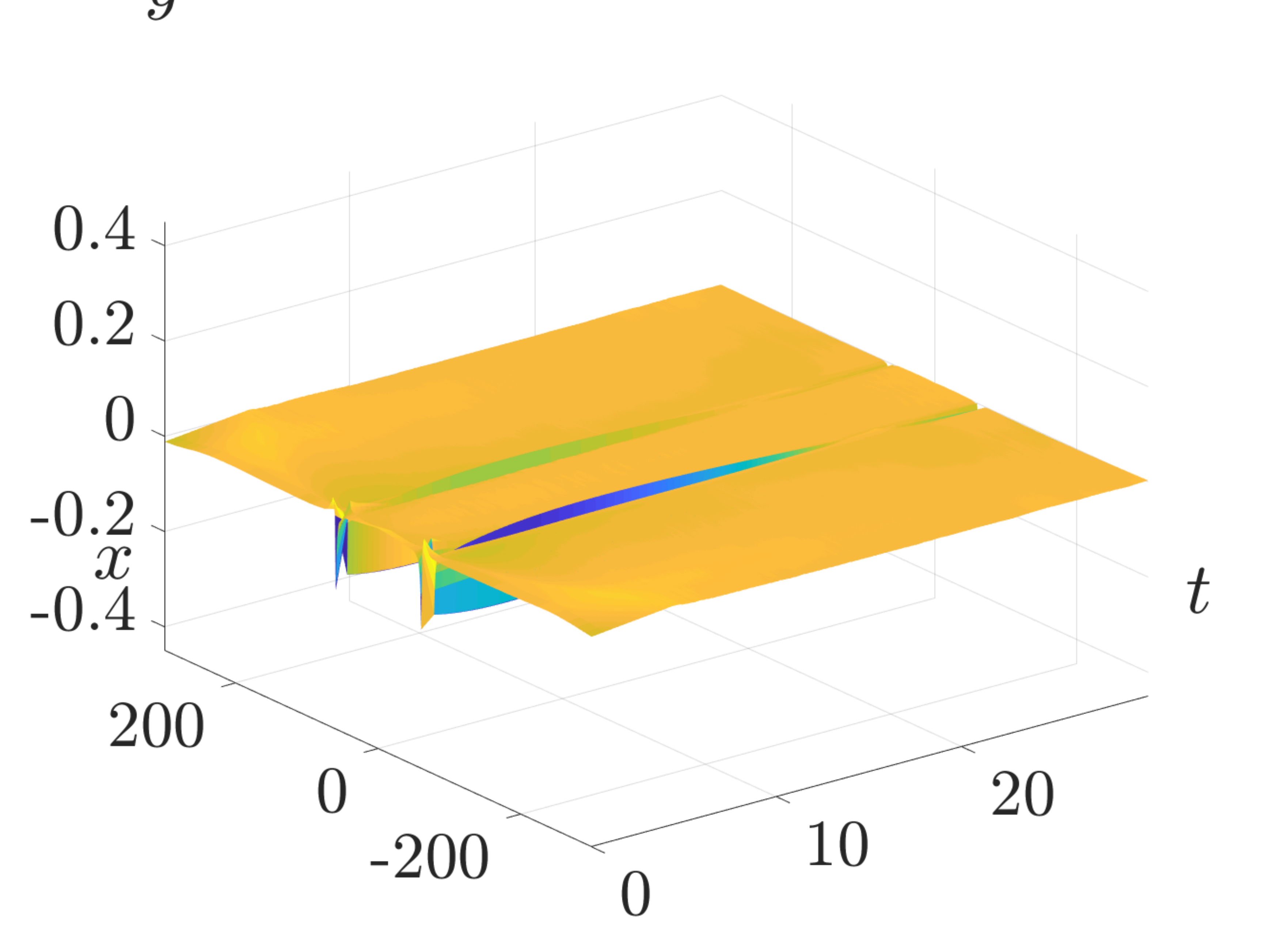}
    \label{subfig:uTconsDiff101}
    }
    \caption{{\bf{Dynamics reconstruction.}} Fig. \ref{subfig:uT101} is the GT of $\psi_t=\Delta_p \psi$ initialized with a pulse. Fig. \ref{subfig:uTcons101} the reconstructed pulse smoothing by Eq. \eqref{eq:approPFlowSolution}. Fig. \ref{subfig:uTconsDiff101} the difference between these two signals.}
    \label{fig:flowRecDiffp101}
\end{figure}

We repeat this experiment for $p=1.5$ with the same initial condition, as shown in Figs. \ref{fig:modes150}, \ref{fig:modeds3D150}, and  \ref{fig:flowRecDiffp150}.
In this case, we obtain less accuracy in the reconstruction with the same number of modes, as the modes are smoother and do not resemble a pulse. As expected, the main mode, in terms of $\alpha$, is not that dominant, where power  is scattered more evenly between modes compared to the case with $p=1.01$. 
\begin{figure}[phtb!]
    \centering
    \includegraphics[trim=350 0 300 0, clip,width=1\textwidth,valign=c]{./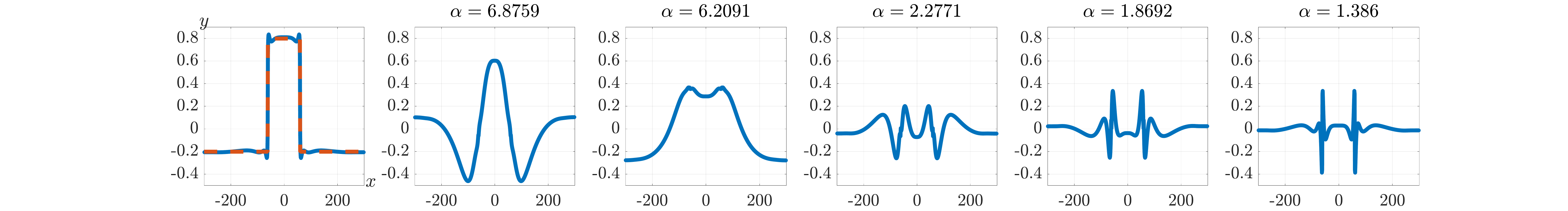}
    \caption{{\bf{\ac{OrthoNS}} of a pulse}, when $p=1.5$ and $r=5$. In the left plot, the pulse is the red line, the reconstruction one is the blue line. From the second left to the right, the five modes are sorted according to their amplitude.}
    \label{fig:modes150}
\end{figure}

%

\begin{figure}[phtb!]
    \centering
    \includegraphics[trim=350 0 280 0, clip,width=1\textwidth,valign=c]{./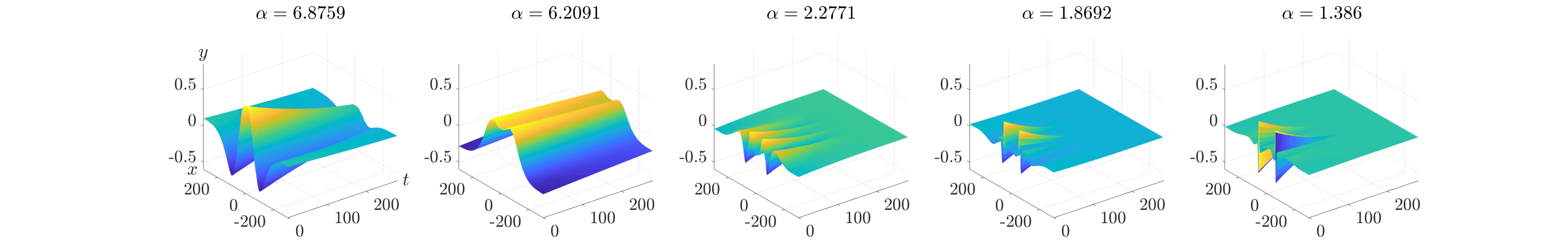}
    \caption{{\bf{Dynamics reconstruction.}} The change of the modes (from Fig. \ref{fig:modes150}) over the time according to the approximated decay profile, Eq. \eqref{eq:approPFlowSolution}. Five significant modes for pulse smoothing when $p=1.5$.}
    \label{fig:modeds3D150}
\end{figure}

\begin{figure}[phtb!]
\centering
\captionsetup[subfigure]{justification=centering}
\subfloat[Diffusion $p=1.5$]{
\includegraphics[trim=0 0 0 0, clip,width=0.33\textwidth,valign=c]{./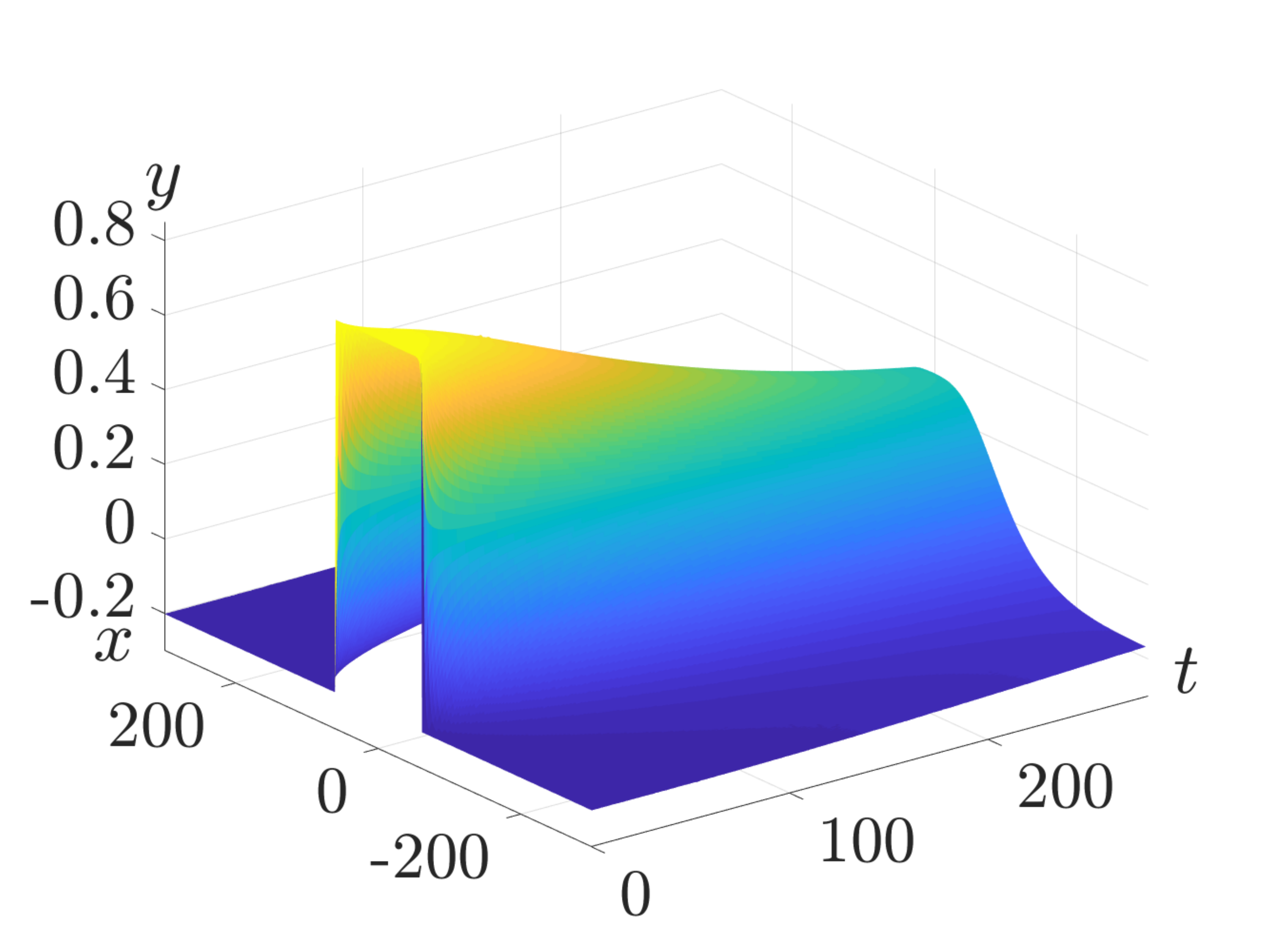}
    \label{subfig:uT150}
    }
    \subfloat[Reconstructed flow $p=1.5$]{
    \includegraphics[trim=0 0 0 0, clip,width=0.33\textwidth,valign=c]{./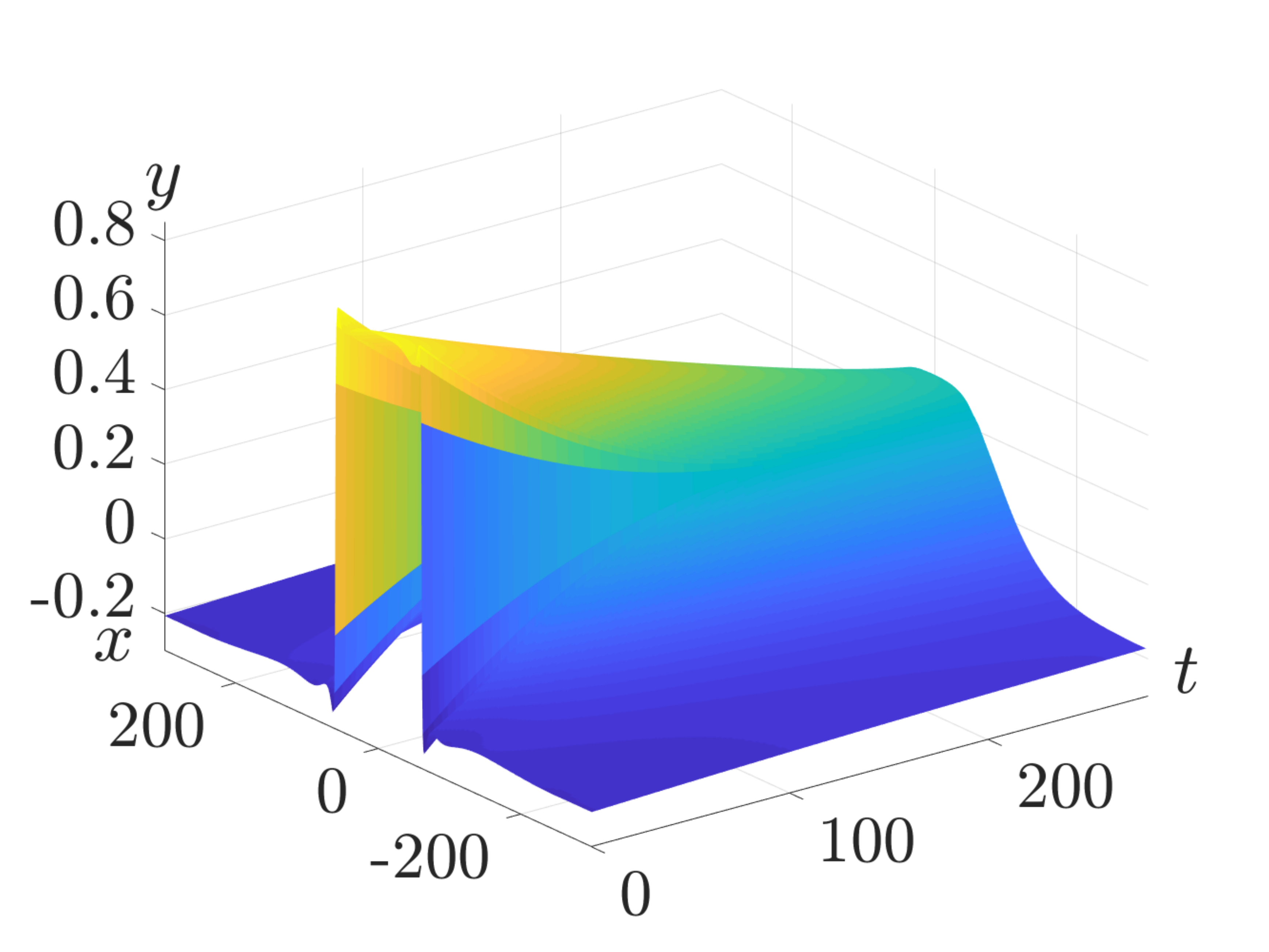}
    \label{subfig:uTcons150}
    }
    \subfloat[Difference]{
    \includegraphics[trim=0 0 0 0, clip,width=0.33\textwidth,valign=c]{./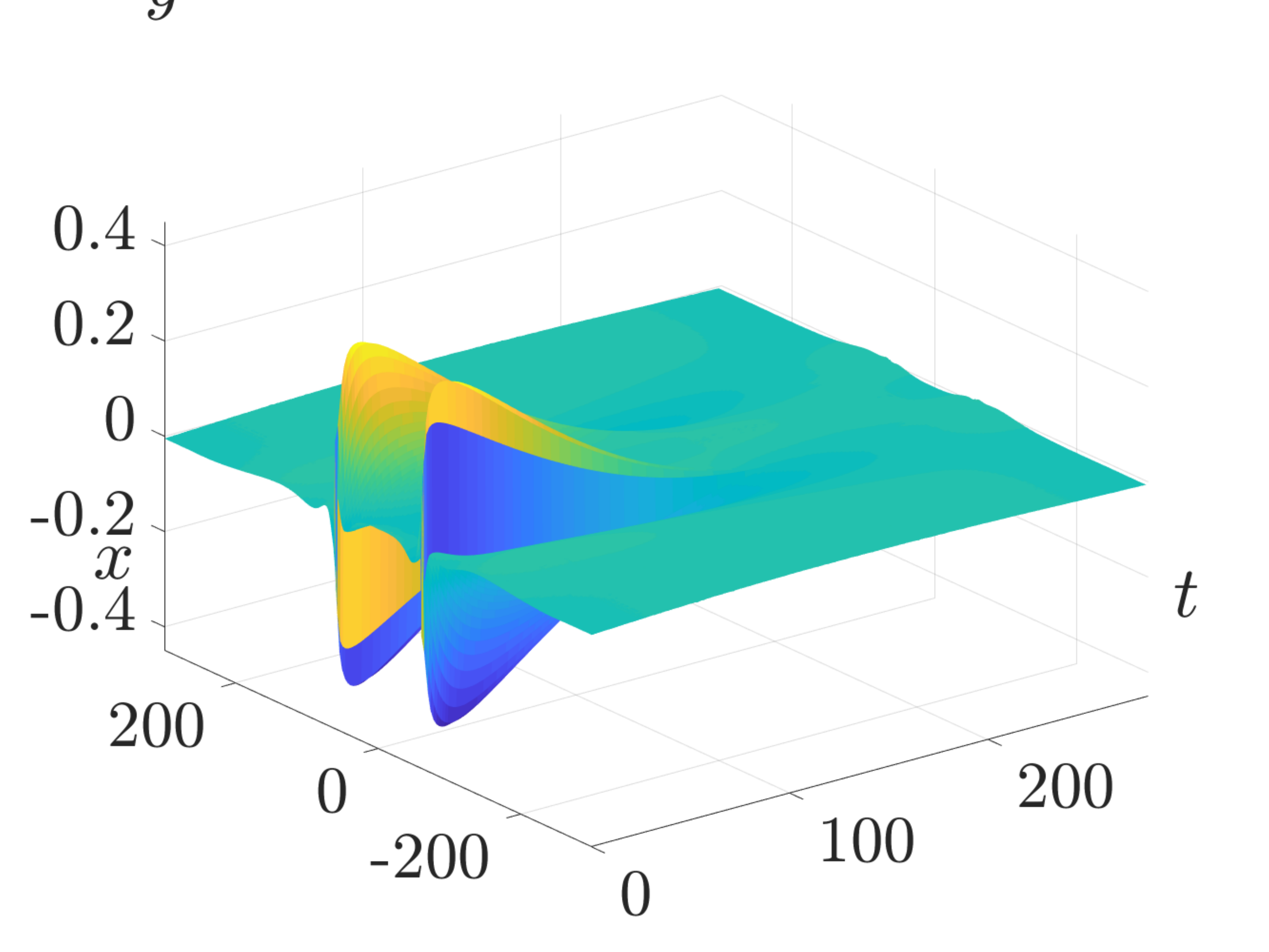}
    \label{subfig:uTconsDiff150}
    }
    \caption{{\bf{Dynamics reconstruction.}} Fig. \ref{subfig:uT150} is the GT of $\psi_t=\Delta_p \psi$ initialized with a pulse. Fig. \ref{subfig:uTcons150} the reconstructed pulse smoothing by Eq. \eqref{eq:approPFlowSolution}. Fig. \ref{subfig:uTconsDiff150} the difference between these two signals.}
    \label{fig:flowRecDiffp150}
\end{figure}

Note, that the error between the flow and the reconstruction is almost zero for $t=0$ (Figs. \ref{subfig:uTconsDiff101} and \ref{subfig:uTconsDiff150}). The reason is that the coefficients $\{\alpha_i\}_{i=1}^r$ are optimized based on the initial condition. However, the error increases fast and then diminishes with time. We plan to investigate in the future alternative optimization models to improve the reconstruction of the entire flow.

\subsection{Signal analysis and processing via \ac{OrthoNS}}\label{subsec:SAP}
We show here that \ac{OrthoNS} precisely and quickly distinguishes between different parts of data. This distinction is done by filtering, as defined in Def. \ref{def:dispFil}. It is demonstrated here by denoising artificial and natural images.

\subsubsection{Denoising an eigenfunction with noise}
In Fig. \ref{Fig:filtering} an eigenfunction of the $p$-Laplacian with additive Gaussian noise ($N\sim\mathcal{N}(0,0.3)$) is denoised.
We apply \ac{OrthoNS} and get the spectrum as shown in Fig. \ref{subfig:filtering_discSpec}. By filtering out the blue component from the spectrum, we restore the eigenfunction (Fig. \ref{subfig:filtering_EF}) and the noise image (Fig. \ref{subfig:filtering_noise}).

\begin{figure}[phtb!]
\centering
\begin{minipage}{1\textwidth}
\centering
\captionsetup[subfigure]{justification=centering}
\subfloat[A $p$-Laplacian eigenfunction $p=1.5$, $\lambda=0.0269$]
 { 
\includegraphics[trim=0 -10 0 -10, clip,width=0.1875\textwidth,valign=c]{./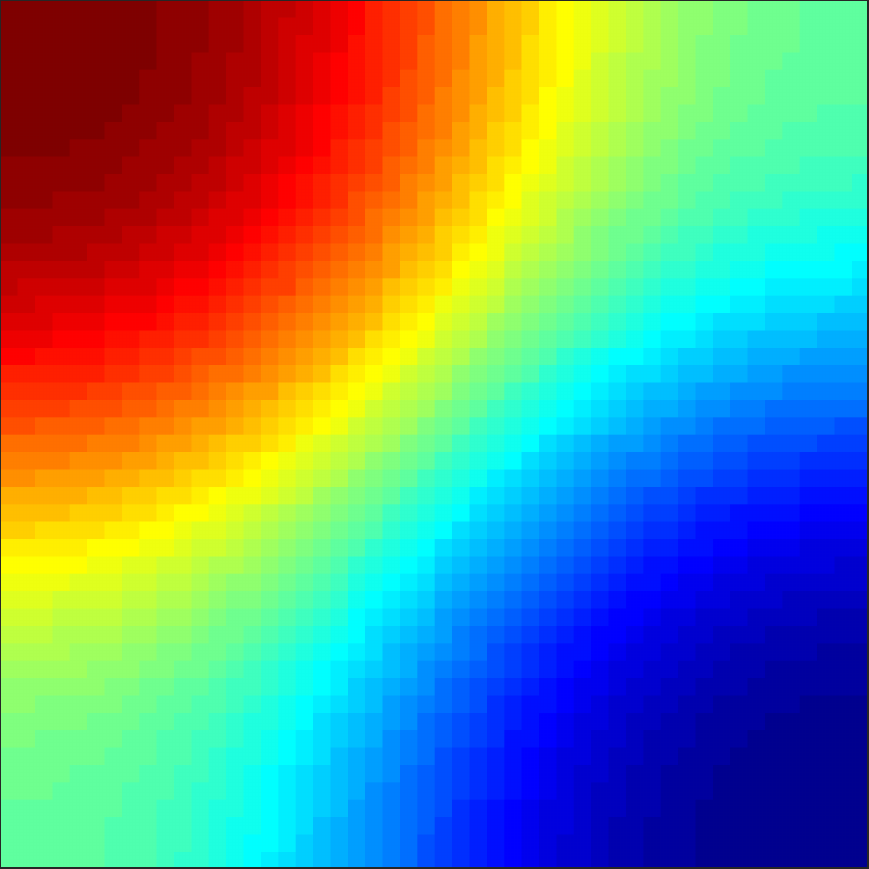}
\label{subfig:origEN}}$\quad+\quad$
\subfloat[Noise Image]
 { 
\includegraphics[trim=0 -10 0 -10, clip,width=0.1875\textwidth,valign=c]{./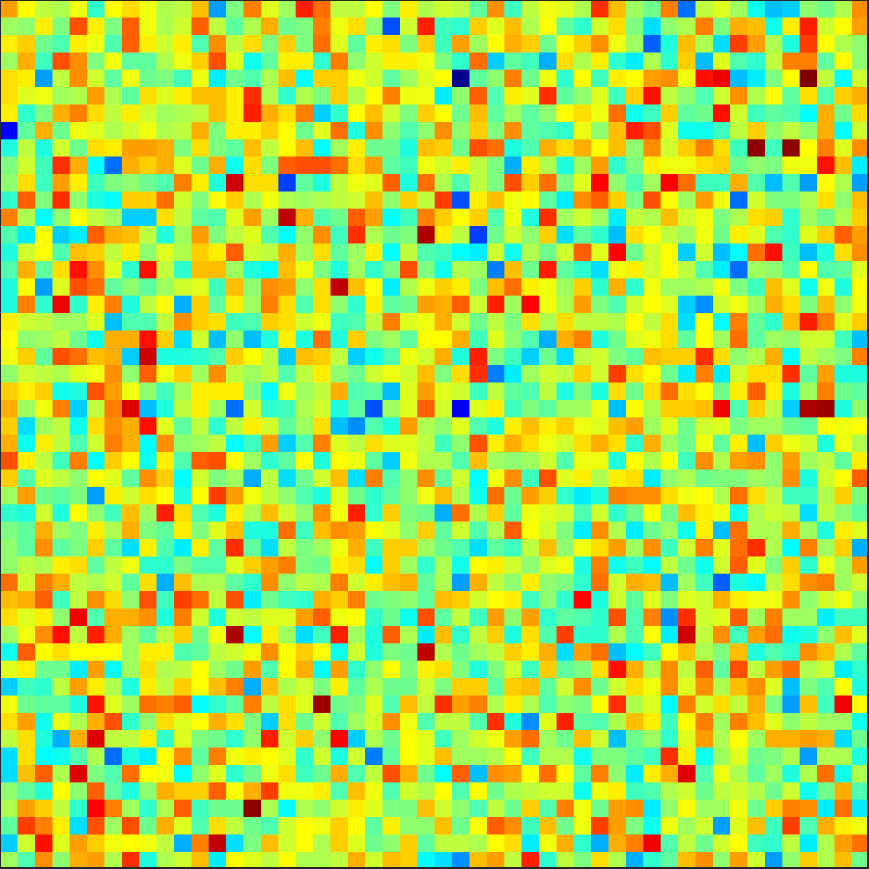}
\label{subfig:noiseImage}}$\quad=\quad$
\subfloat[An eigenfunction with additional noise $PSNR=10.5dB$]
 { 
\includegraphics[trim=0 -10 0 -10, clip,width=0.1875\textwidth,valign=c]{./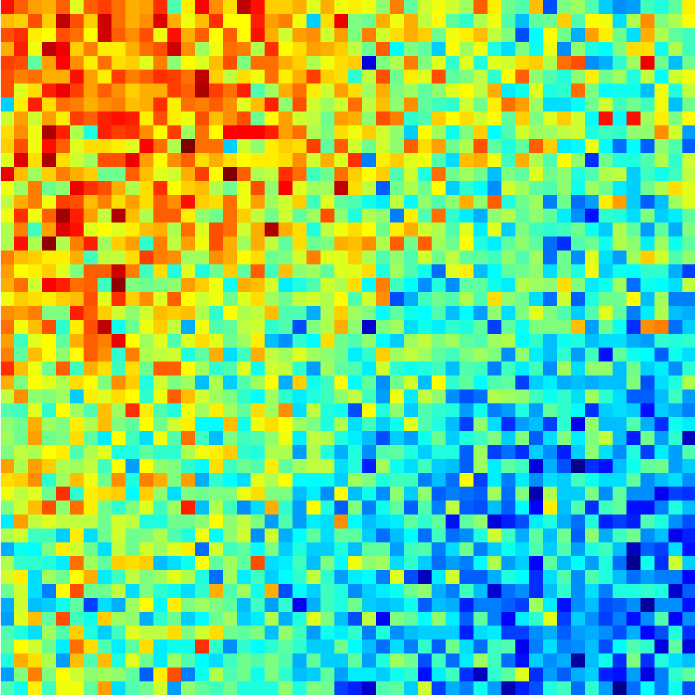}
\label{subfig:filtering_f}}\\
\subfloat[Filtered noise]{
\includegraphics[trim=0 -10 0 -10, clip,width=0.1875\textwidth,valign=c]{./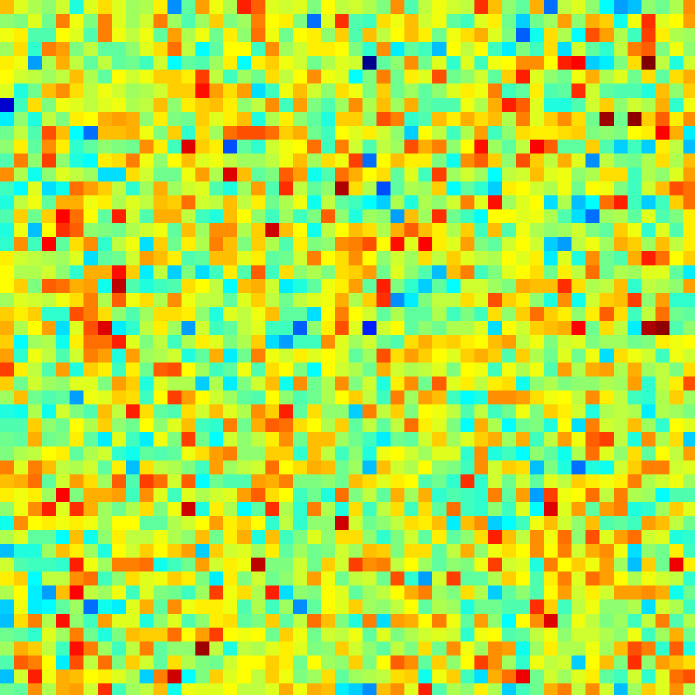}
\label{subfig:filtering_noise}}
$\quad$
\subfloat[Filtered eigenfunction $PSNR=30.2dB$]{
\includegraphics[trim=0 -10 0 -10, clip,width=0.1875\textwidth,valign=c]{./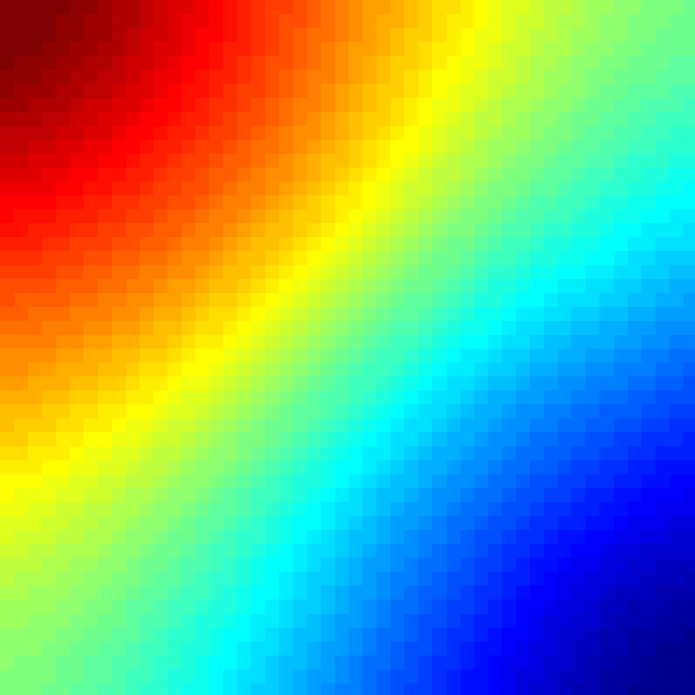}
\label{subfig:filtering_EF}
}
\subfloat[The discrete spectrum]
  {
  \includegraphics[trim=0 80 0 0, clip,width=0.325\textwidth,valign=c]{./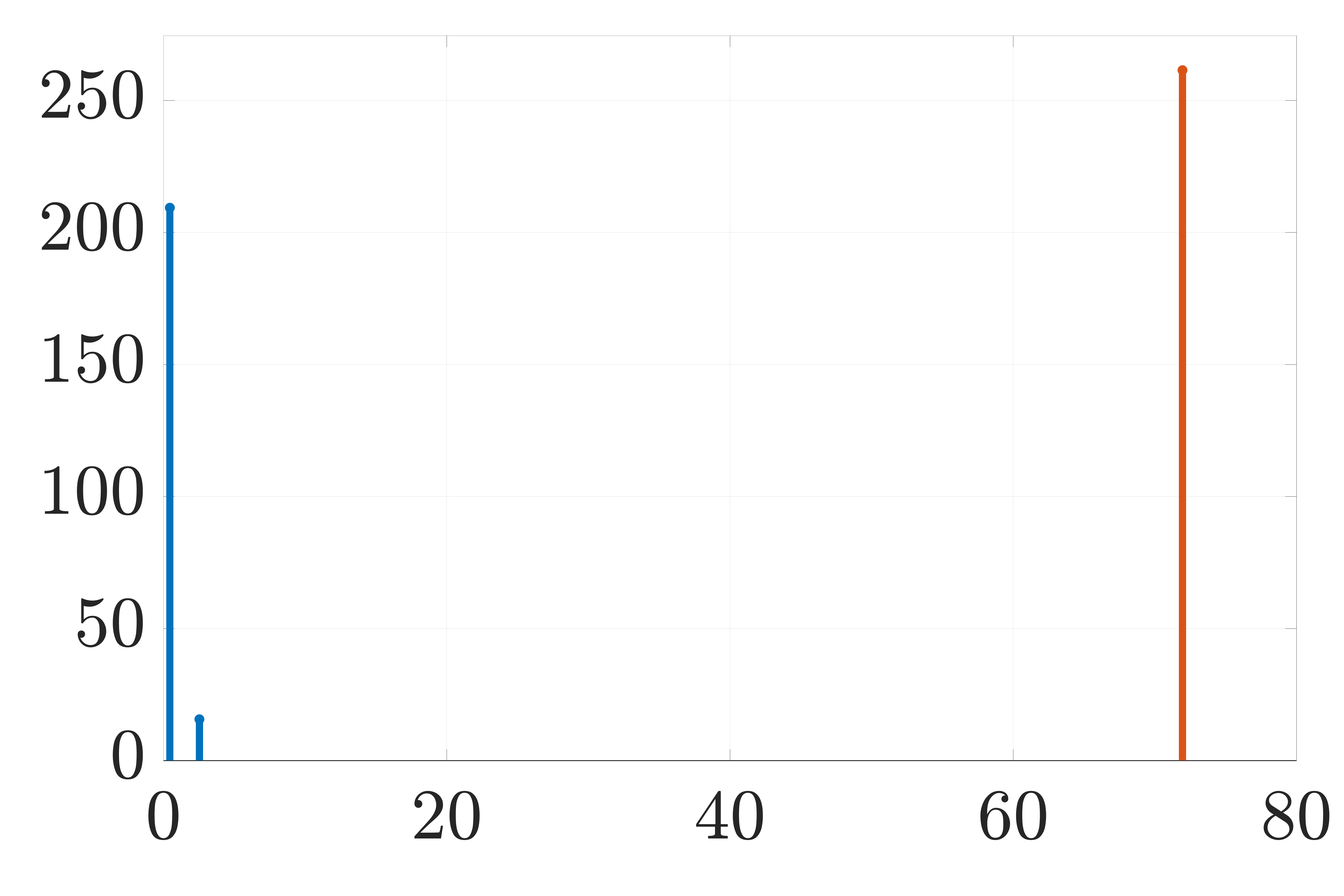}
\label{subfig:filtering_discSpec}
}
\end{minipage}
\begin{minipage}{1\textwidth}
\caption{{\bf{Filtering via \ac{OrthoNS} and Def. \ref{def:dispFil}}} - Recovering an eigenfunction, corrupted with Gaussian noise. See the text for further details.}
\label{Fig:filtering}
\end{minipage}
\end{figure}


\subsubsection{Denoising a natural image} 
In Fig. \ref{Fig:denoising} a natural image with additive white Gaussian noise ($N\sim \mathcal{N}(0,0.2)$, $PSNR = 14dB$) is denoised. 
Filtering is performed using \ac{OrthoNS}, based on the $p$-Laplacian with $p=1.01$. Modes with high eigenvalues (lower extinction time) are filtered out. This yields an edge preserving denoising. As expected, we lose some small details but the zebra's texture in general is preserved. 
  
\begin{figure}[phtb!]
\centering
\captionsetup[subfigure]{justification=centering}
\subfloat[A zebra]
  {
\includegraphics[width=0.2\textwidth,valign=c]{./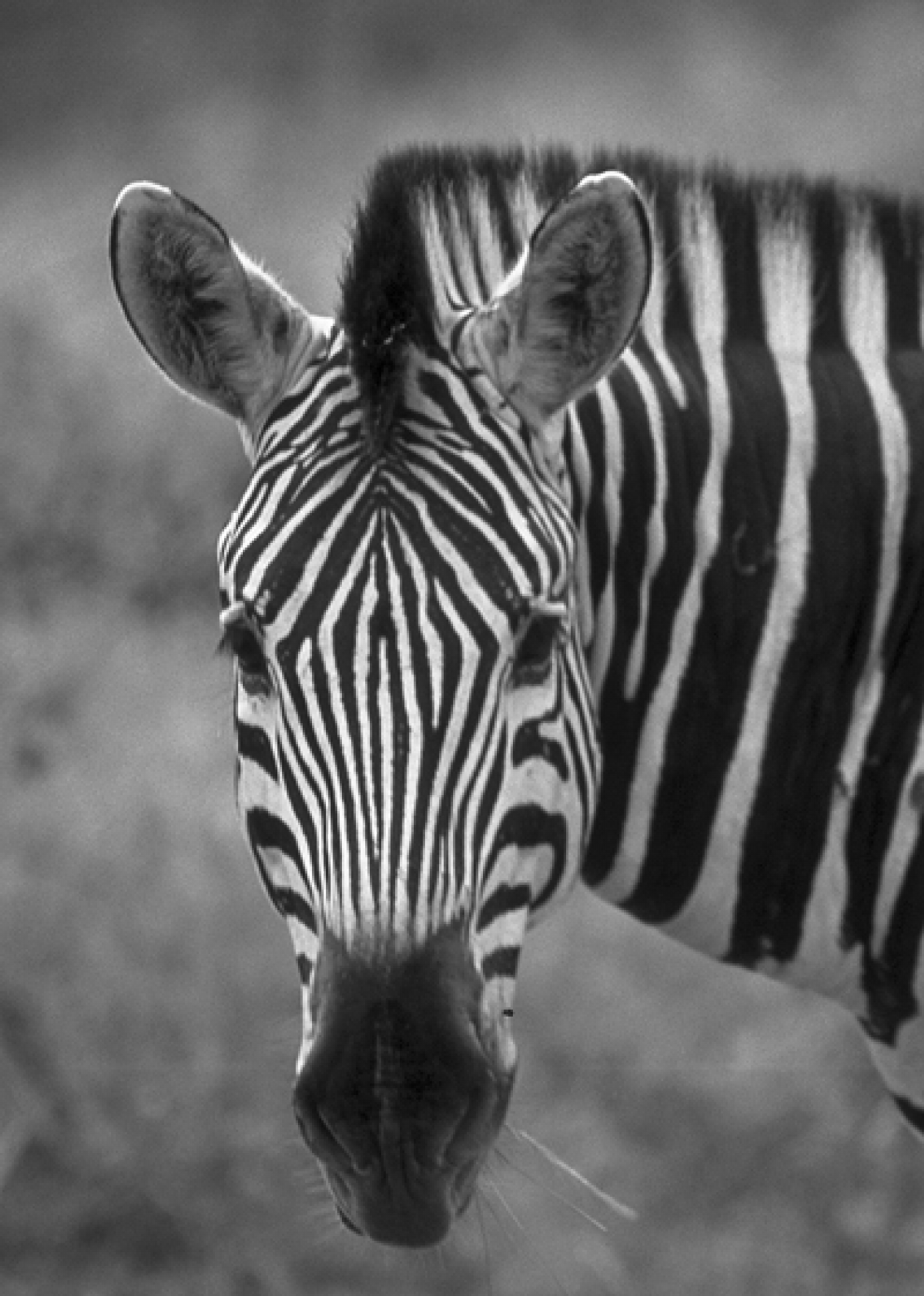}
\label{subfig:fullZebra}
}
\subfloat[A noisy zebra $PSNR = 14dB$]
  {
\includegraphics[width=0.2\textwidth,valign=c]{./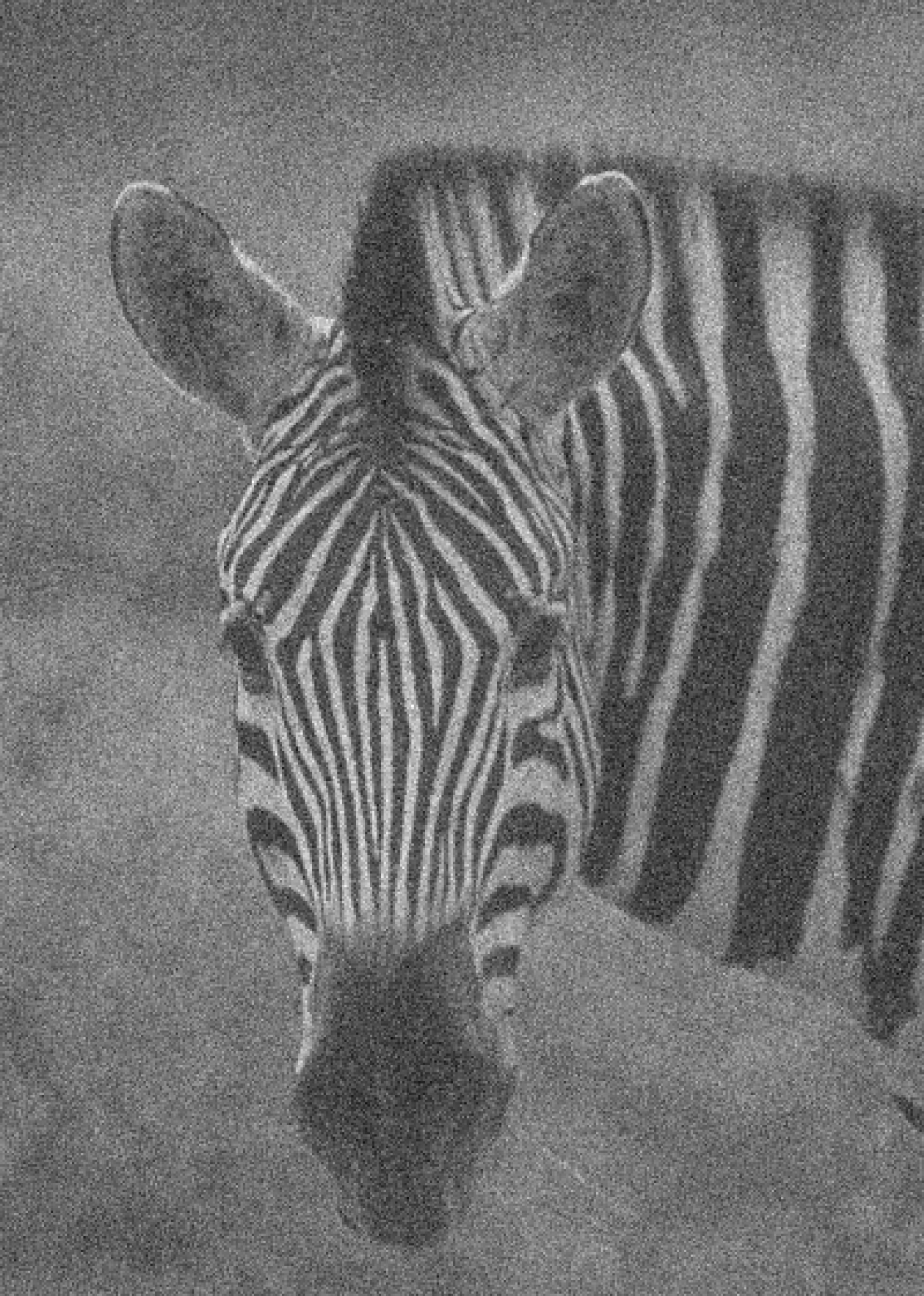}
\label{subfig:fullNoiseZebra}
}
\subfloat[A filtered zebra $PSNR = 22.6dB$]
  {
\includegraphics[width=0.2\textwidth,valign=c]{./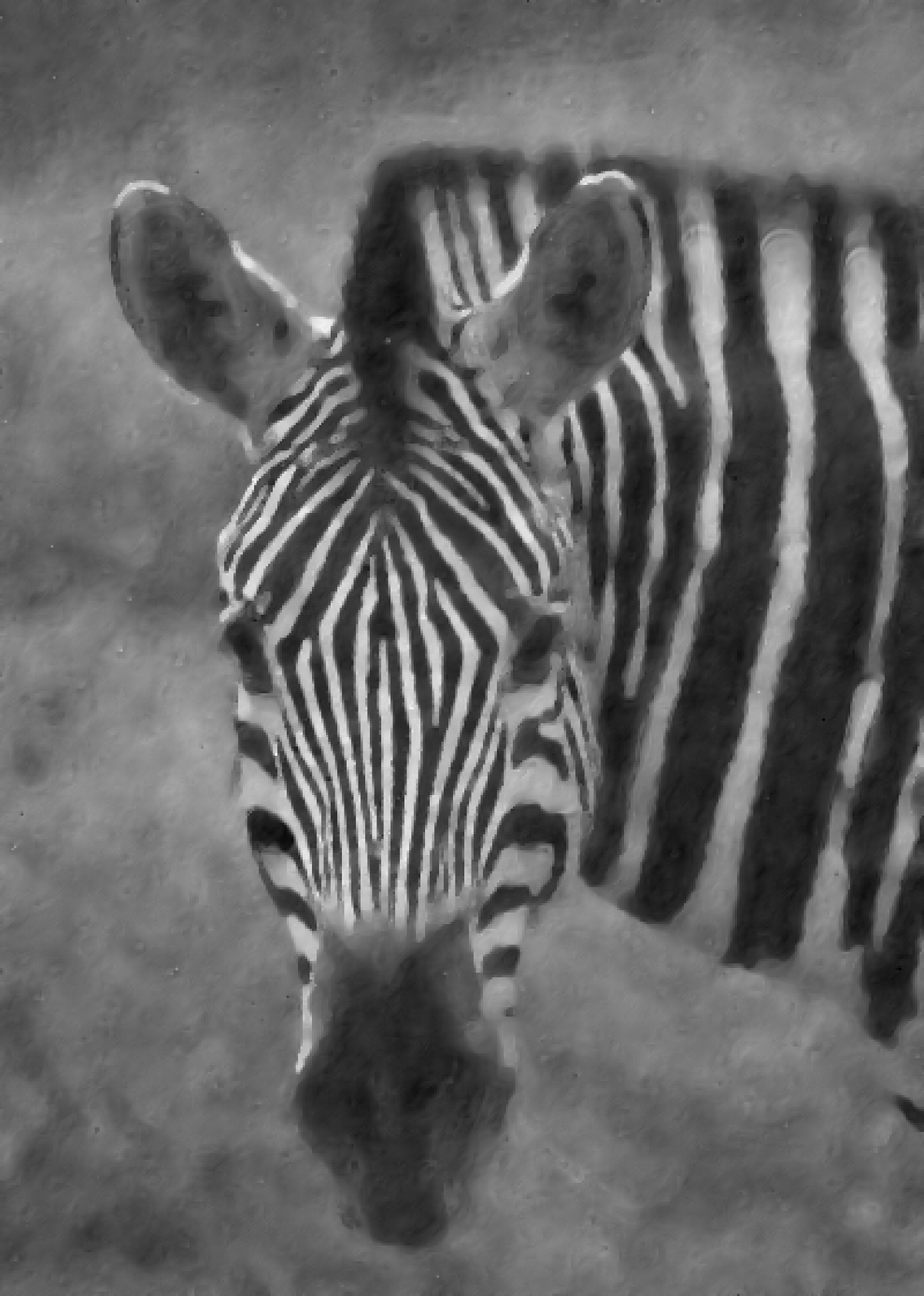}
\label{subfig:filteredZebra}
}
\caption{\emph{Denoiser} - Using the definition of filtering, Def. \ref{def:dispFil}, we filter out the noise.}
\label{Fig:denoising}
\end{figure}

\subsubsection{Spectrum} In Fig. \ref{Fig:ZebraDecompositionDMD_101} we demonstrate a qualitative comparison between the nonlinear spectral $p$-decomposition of \cite{cohen2020Introducing}, the \ac{OrthoNS} decomposition and the posterior \ac{OrthoNS}. We apply these methods on the zebra (Fig. \ref{subfig:fullZebra}) for $p=1.01$. 
On the left column, the spectra of the $p$-decomposition, the \ac{OrthoNS}, and the posterior scheme are given. The images from the second left column to the right represent four different bands in the spectra. One can see that the bands are automatically sorted from fine to coarse spatial structures.
%
\begin{figure}[phtb!]
\centering
\captionsetup[subfigure]{justification=centering}
\subfloat[The $p$-spectrum $S$ vs. time]
  {
\includegraphics[width=0.3\textwidth,height=0.2\textwidth]{./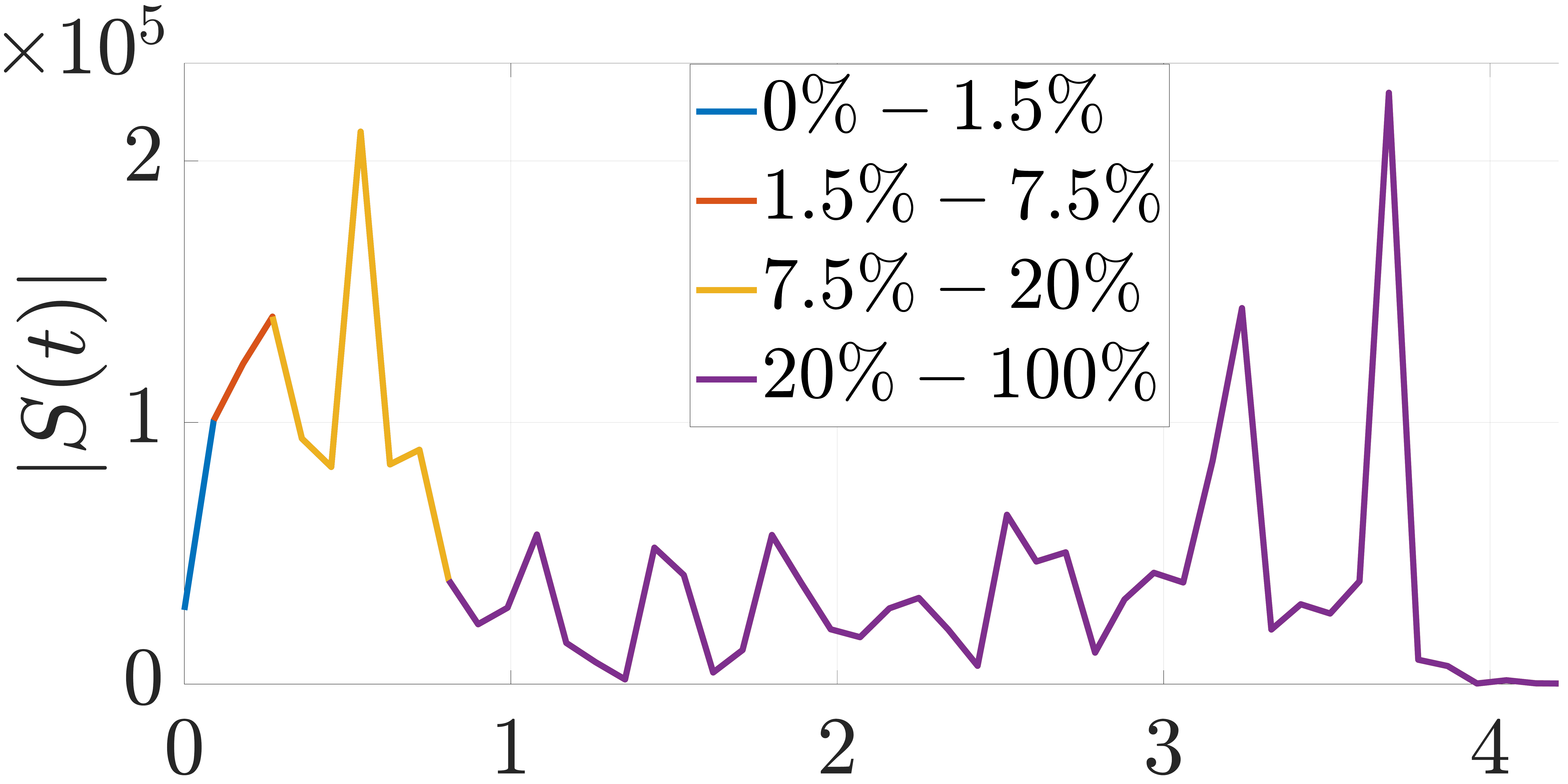}
\label{subfig:specZebra1_01color}
}
\subfloat[$0\%-1.5\%$]
  {
\includegraphics[width=0.15\textwidth]{./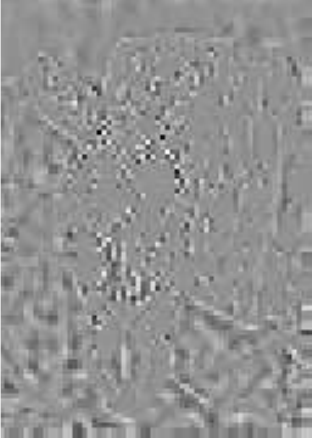}
\label{subfig:z1_101}
}
\subfloat[$1.5\%-7.5\%$]
  {
\includegraphics[width=0.15\textwidth]{./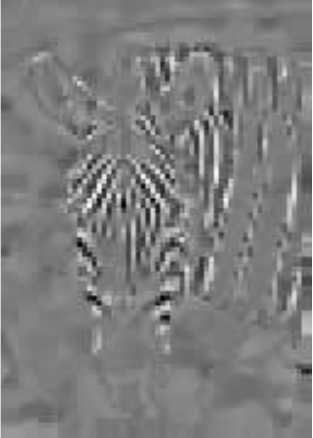}
\label{subfig:z2_101}
}
\subfloat[$7.5\%-20\%$]
  {
\includegraphics[width=0.15\textwidth]{./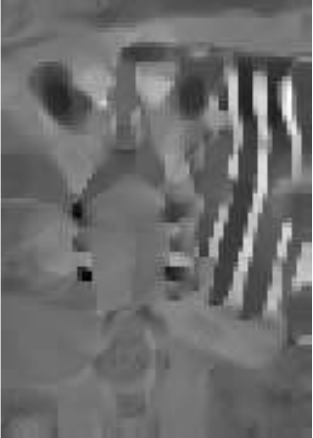}
\label{subfig:z3_101}
}
\subfloat[$20\%-100\%$]
  {
\includegraphics[width=0.15\textwidth]{./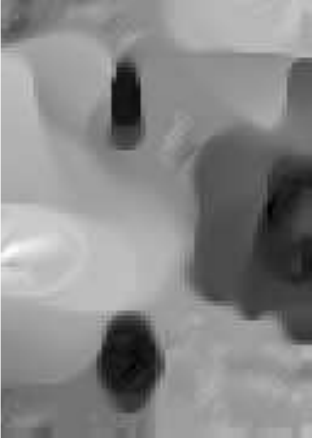}
\label{subfig:z4_101}
}
\label{fig:pDecomZebra101ConDisBHN}

\centering
\captionsetup[subfigure]{justification=centering}
\subfloat[The \ac{OrthoNS}]
  {
\includegraphics[width=0.3\textwidth]{./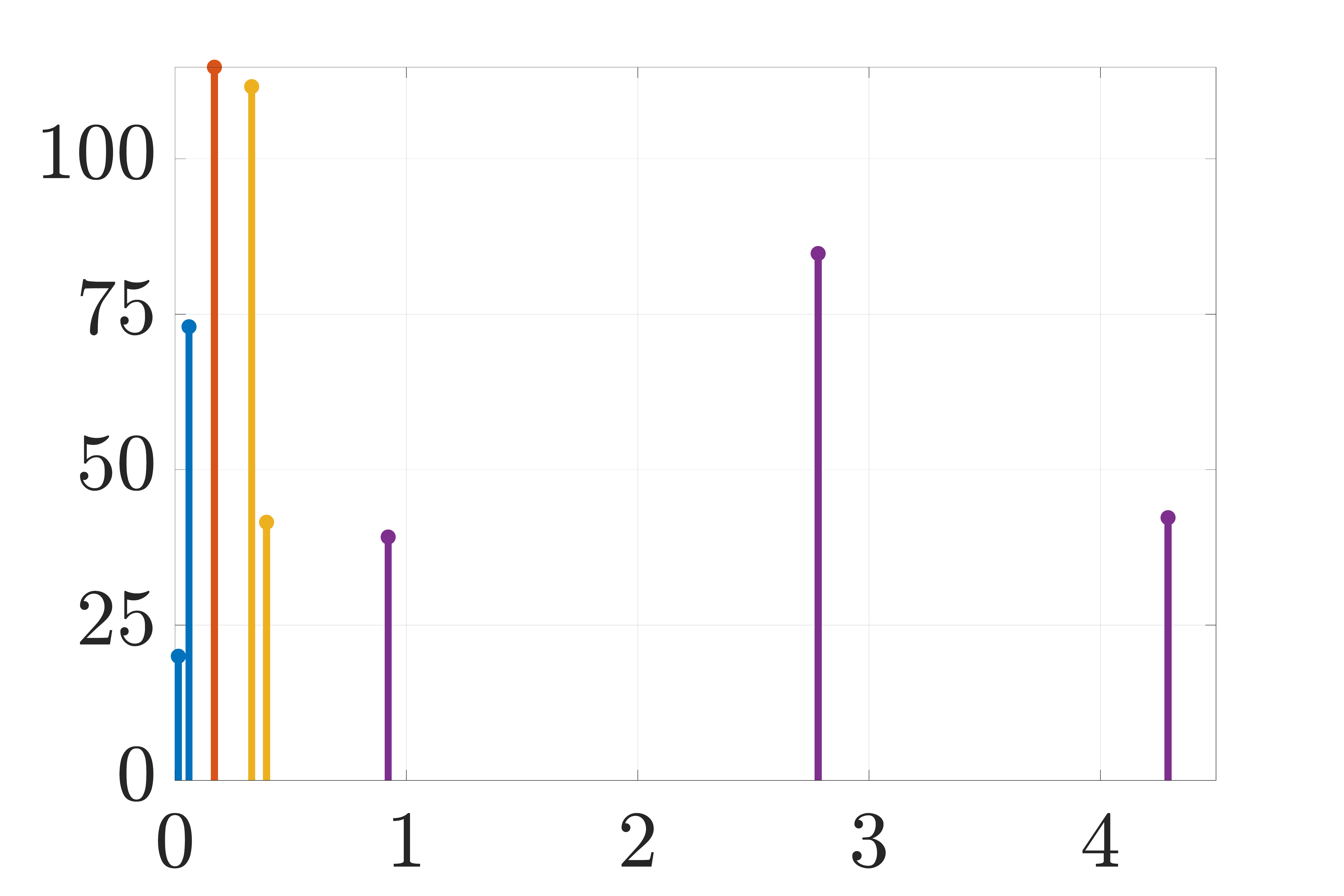}
\label{subfig:specDMDZebra1_01color}
}
\subfloat[Blue items]
  {
\includegraphics[width=0.15\textwidth]{./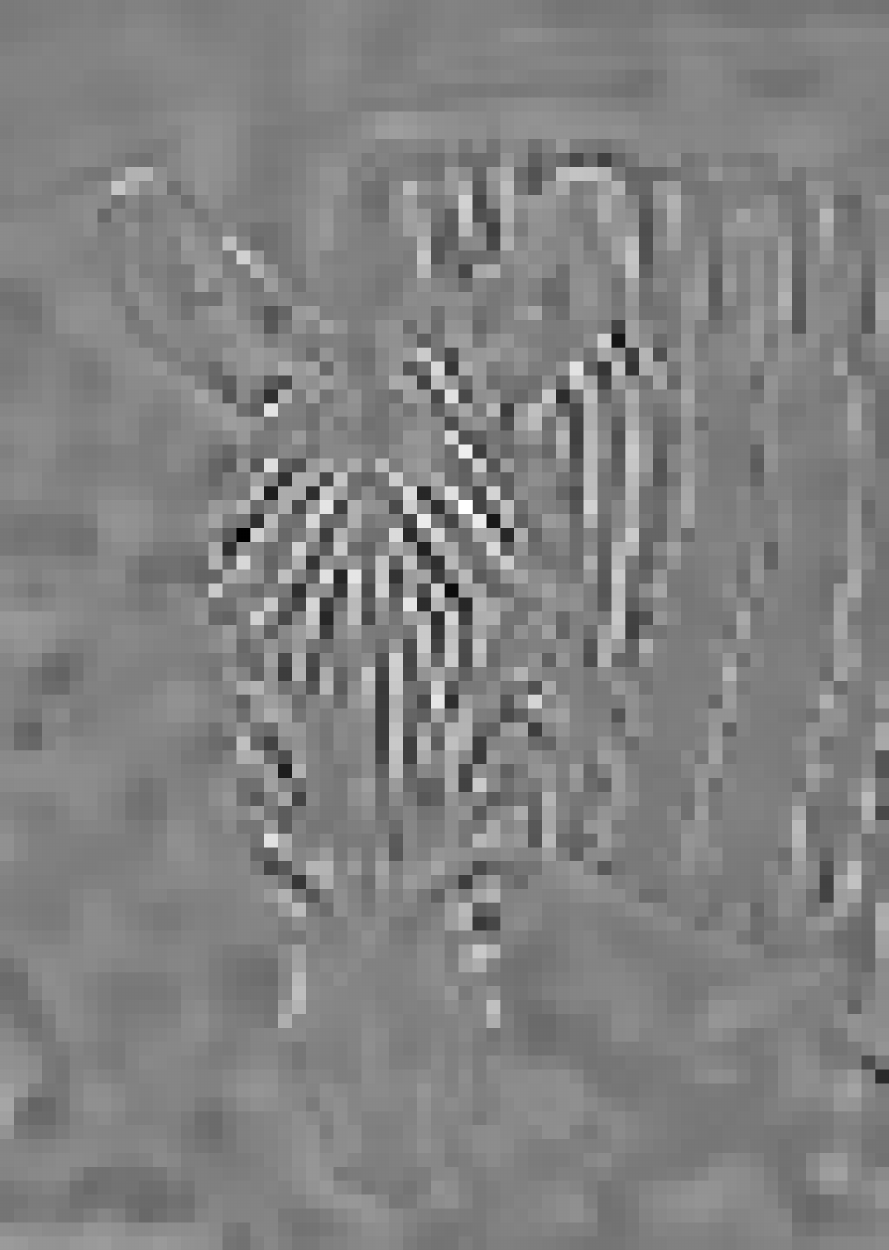}
\label{subfig:z1_DMD_101}
}
\subfloat[Red items]
  {
\includegraphics[width=0.15\textwidth]{./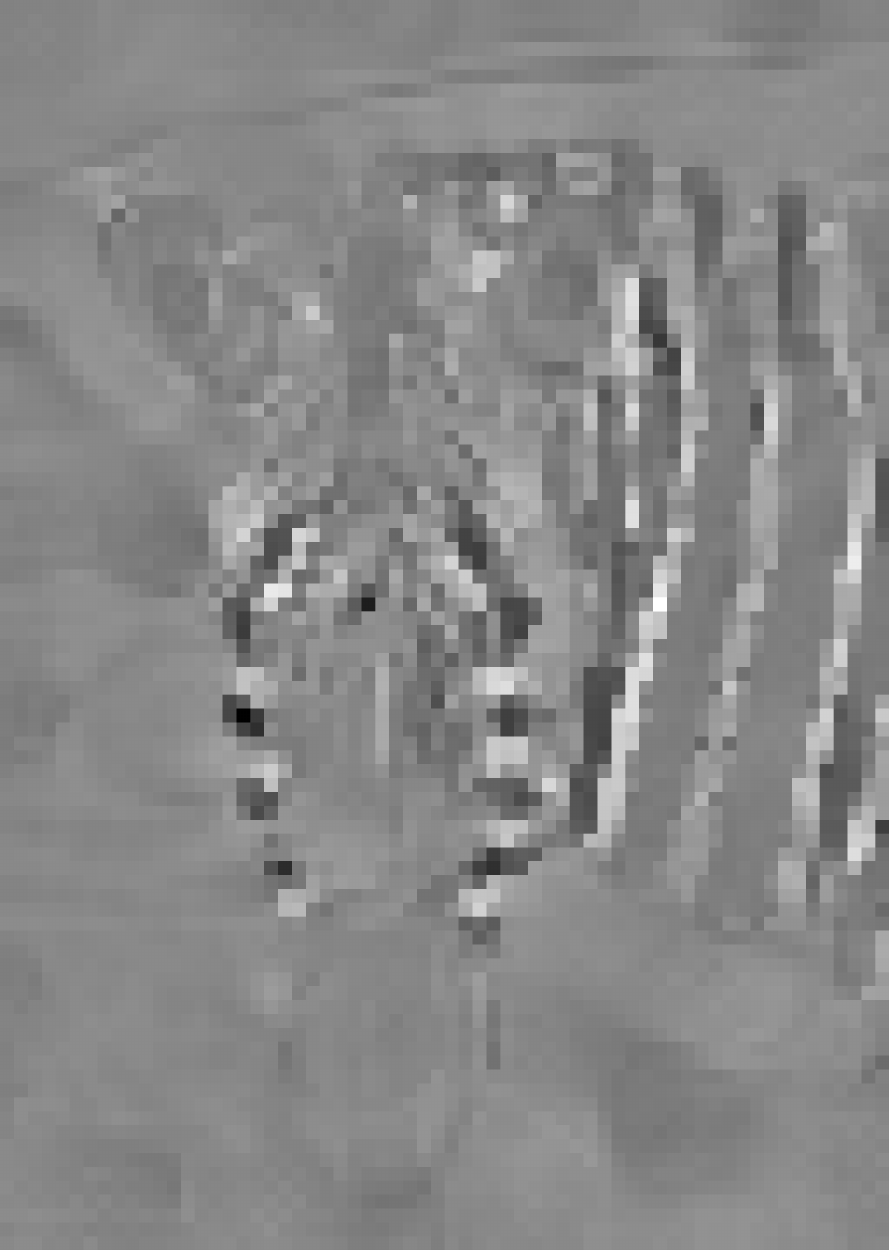}
\label{subfig:z2_DMD_101}
}
\subfloat[Yellow items]
  {
\includegraphics[width=0.15\textwidth]{./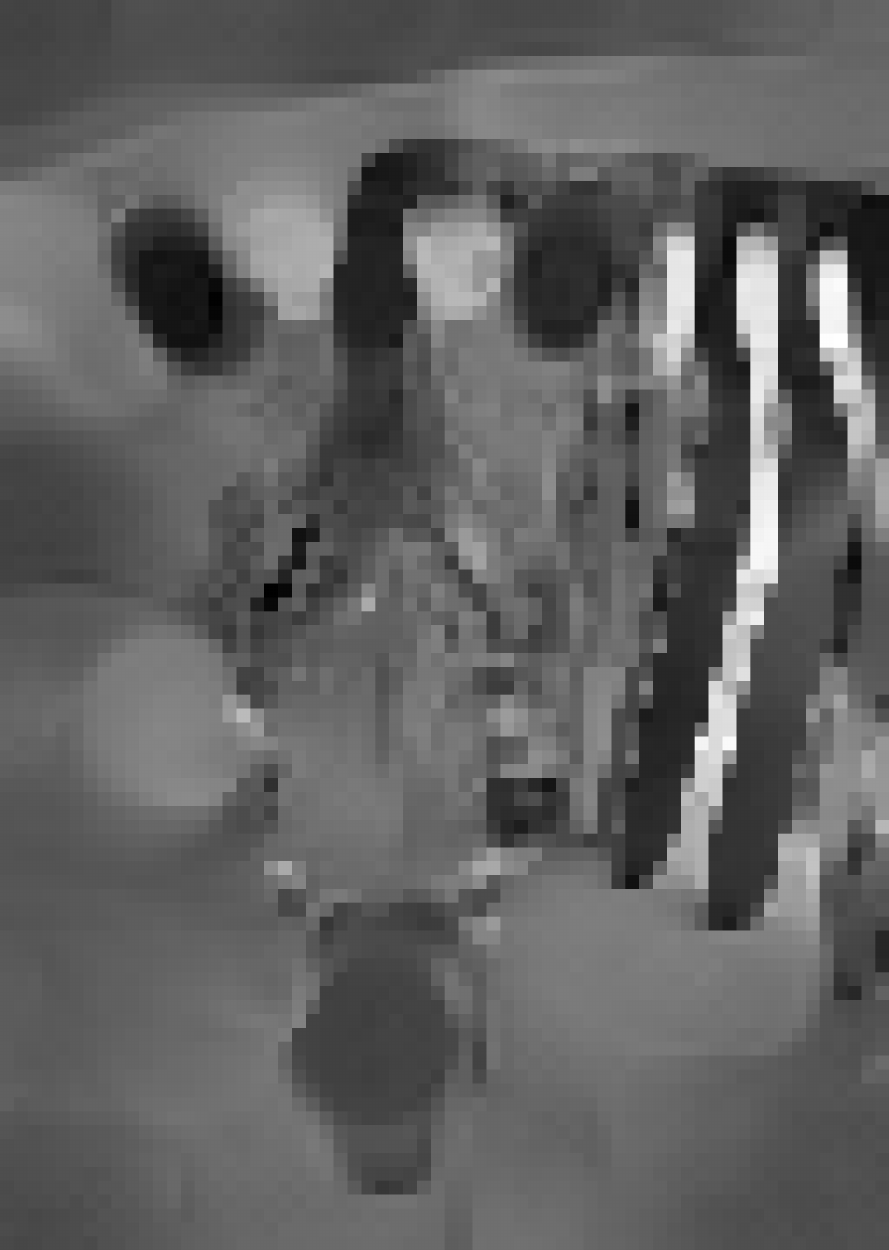}
\label{subfig:z3_DMD_101}
}
\subfloat[Purple items]
  {
\includegraphics[width=0.15\textwidth]{./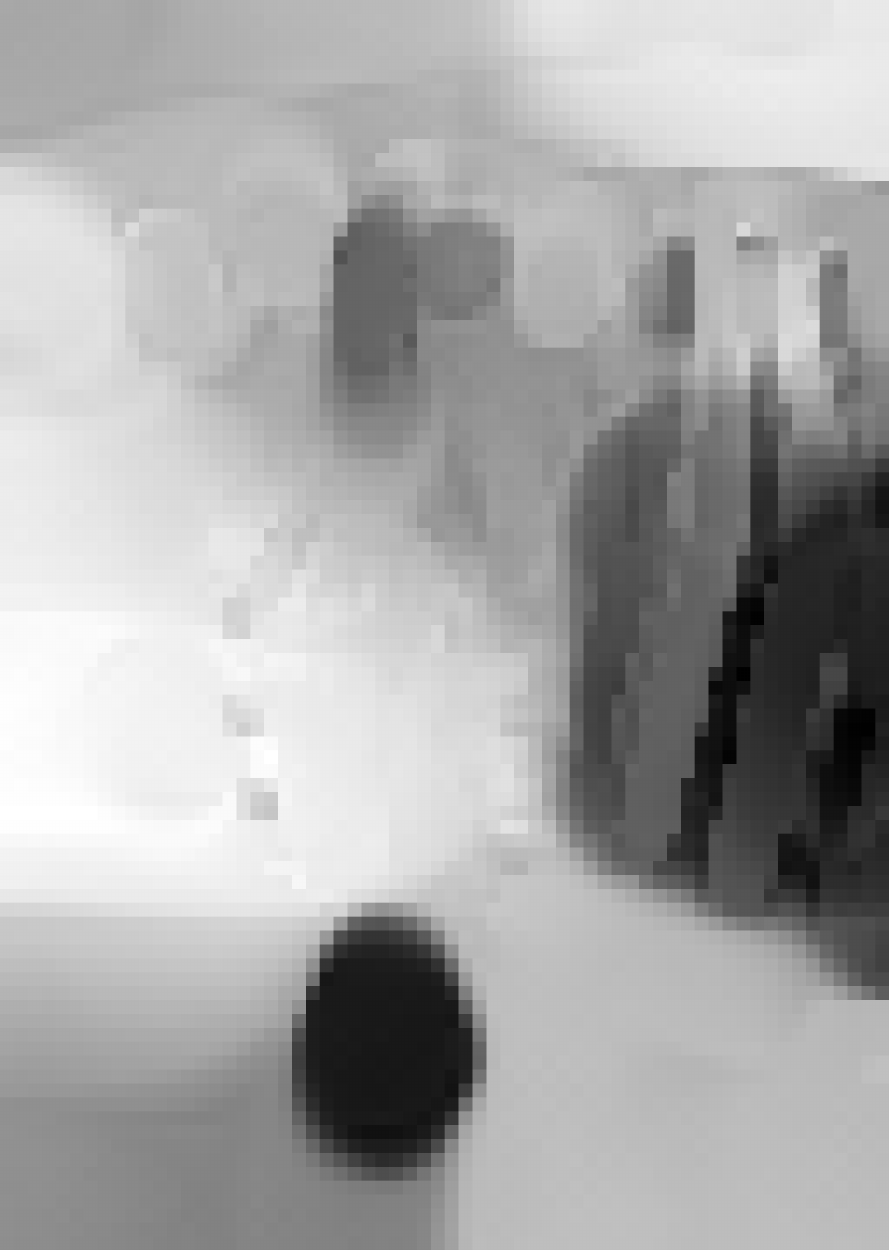}
\label{subfig:z4_DMD_101}
}\\
\subfloat[The posterior \ac{OrthoNS}]
  {
\includegraphics[width=0.3\textwidth]{./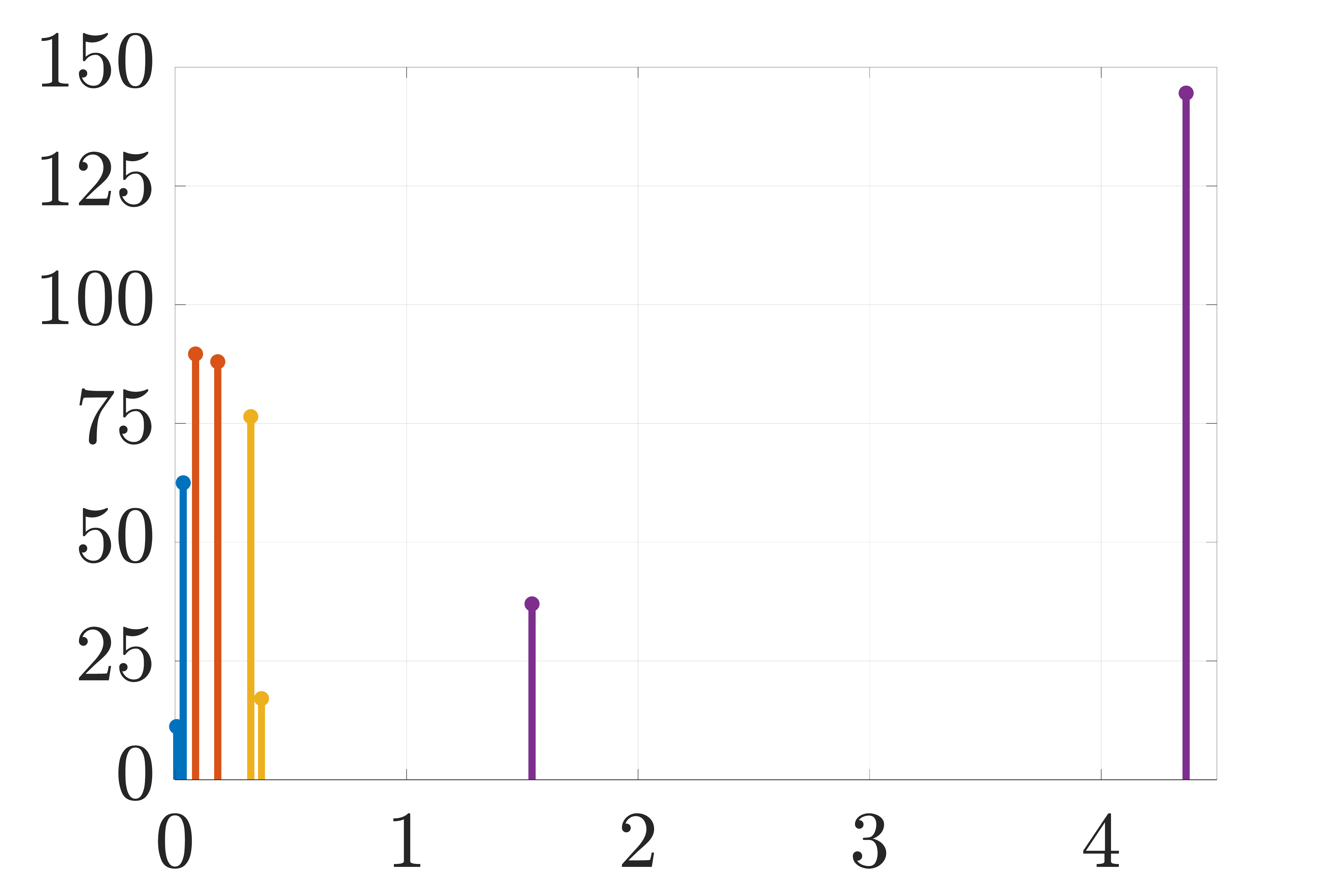}
\label{subfig:specDMDZebra1_01color_blind}
}
\subfloat[Blue items]
  {
\includegraphics[width=0.15\textwidth]{./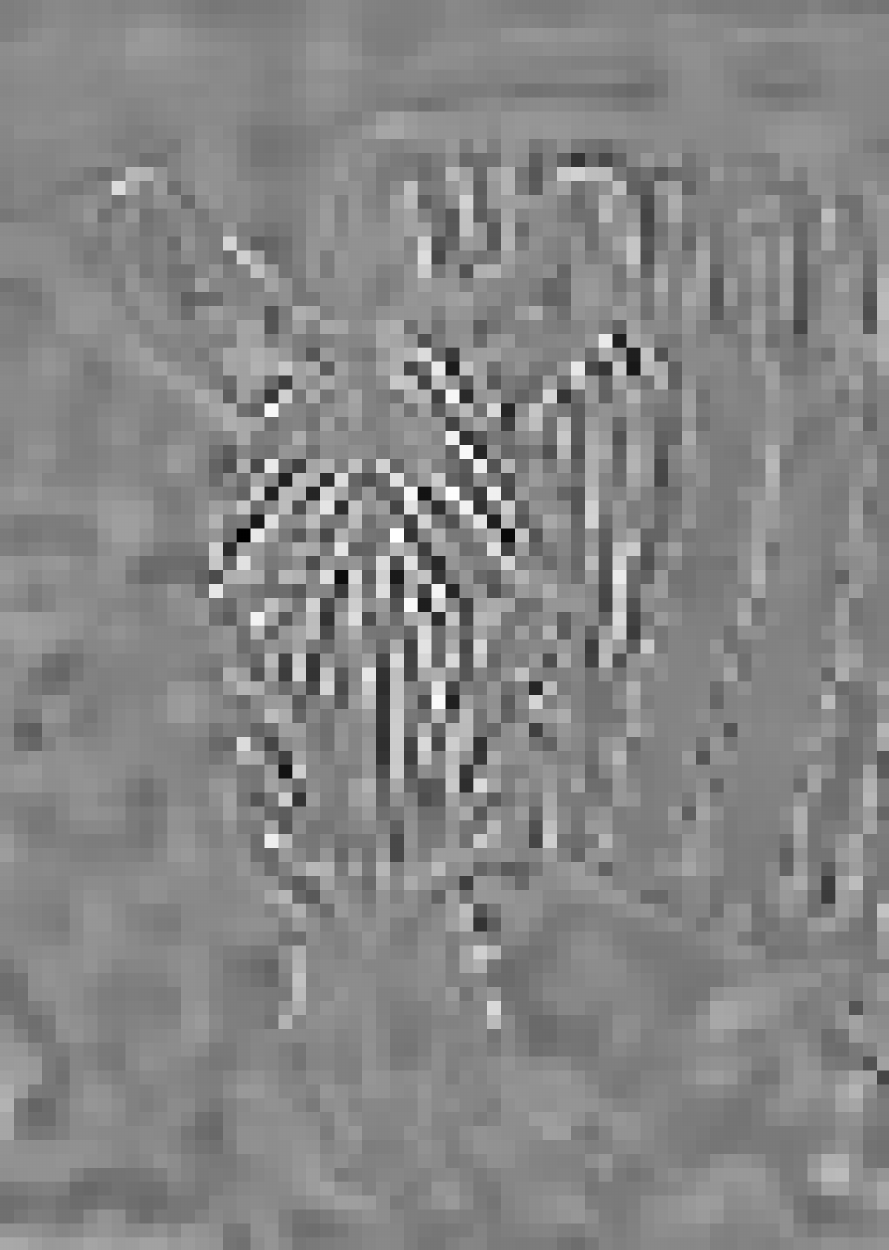}
\label{subfig:z1_DMD_101_blind}
}
\subfloat[Red items]
  {
\includegraphics[width=0.15\textwidth]{./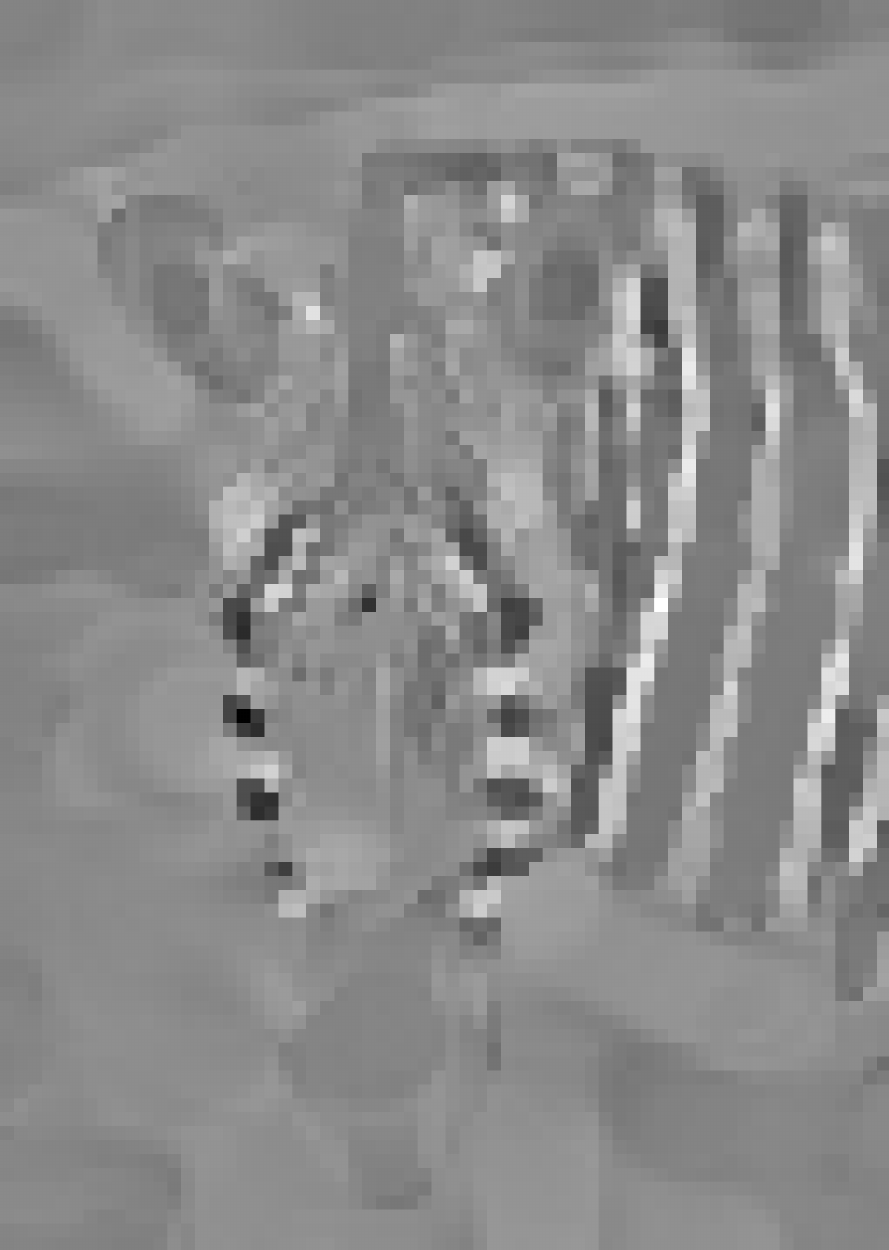}
\label{subfig:z2_DMD_101_blind}
}
\subfloat[Yellow items]
  {
\includegraphics[width=0.15\textwidth]{./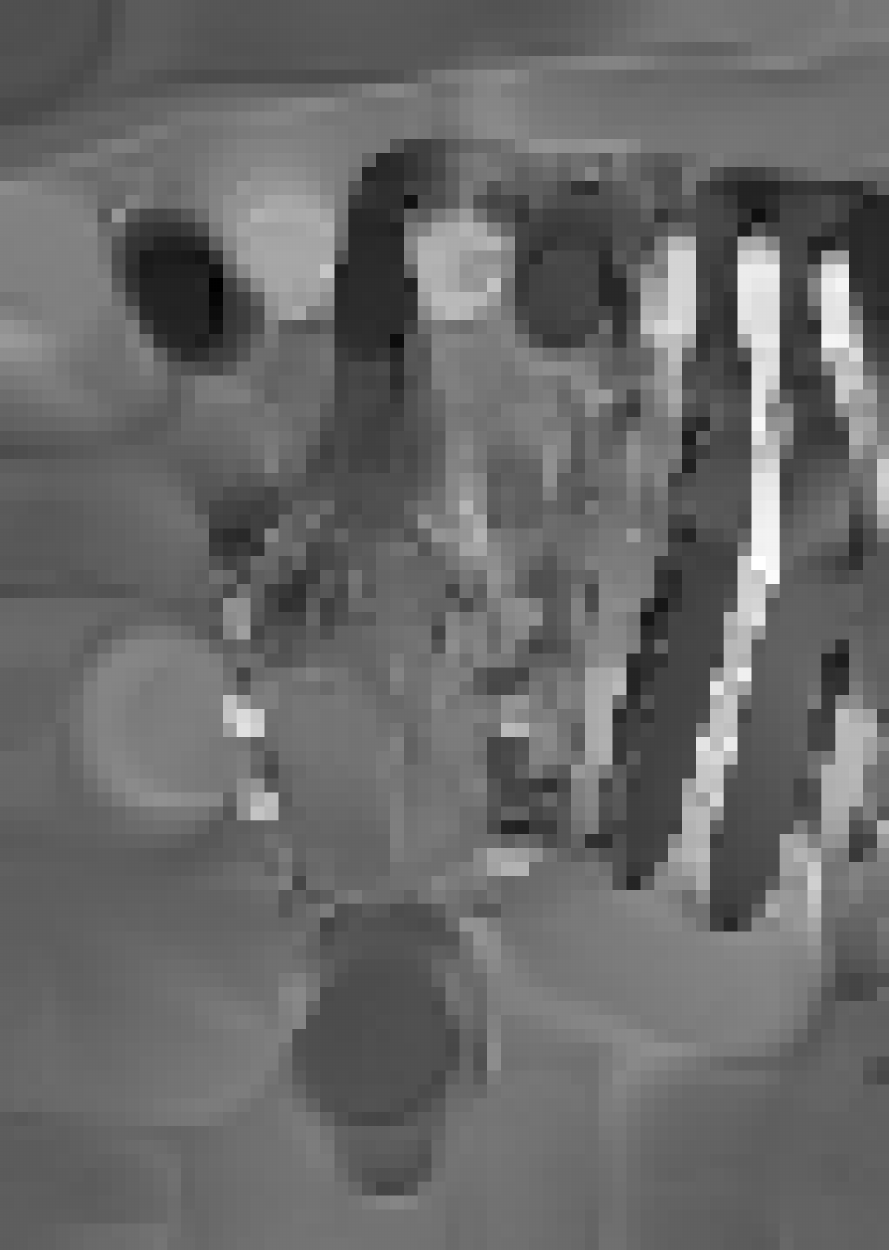}
\label{subfig:z3_DMD_101_blind}
}
\subfloat[Purple items]
  {
\includegraphics[width=0.15\textwidth]{./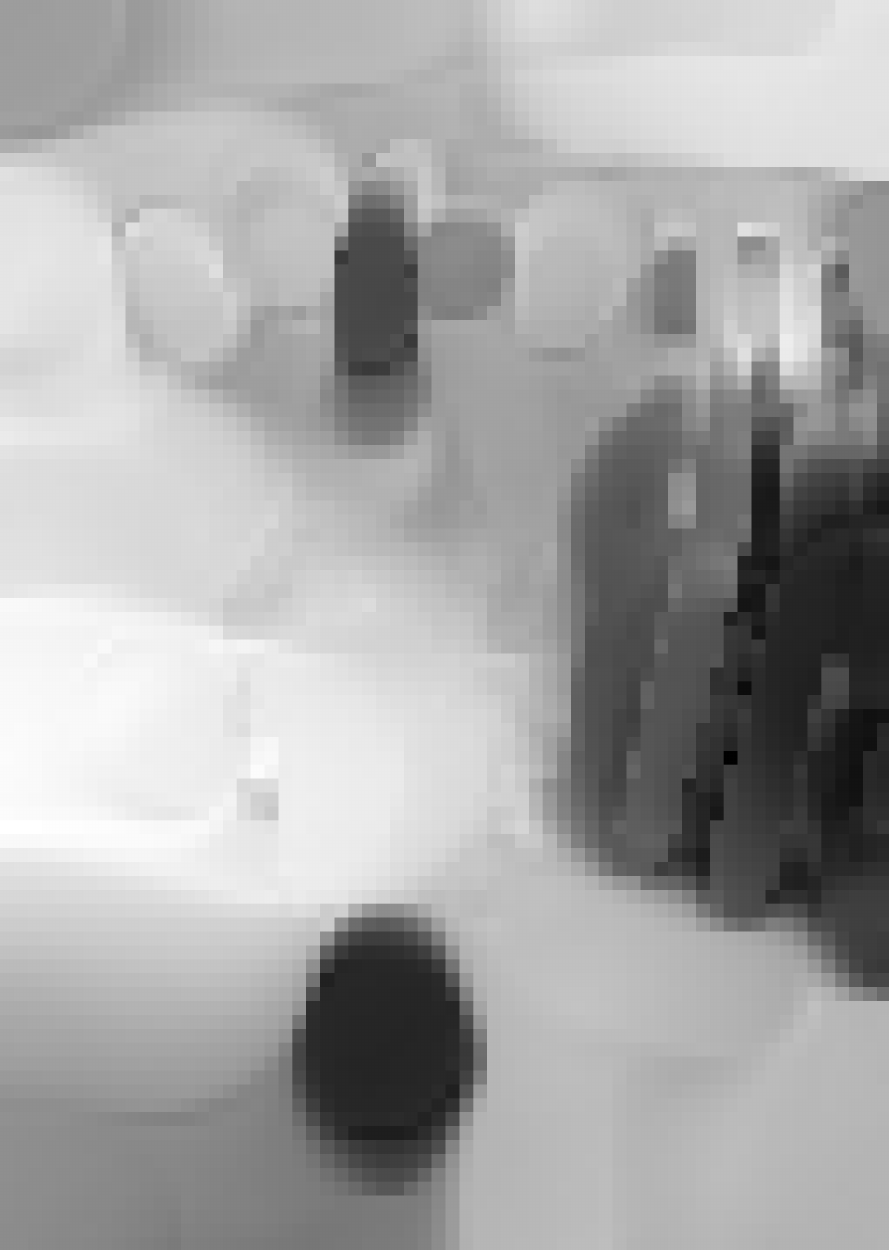}
\label{subfig:z4_DMD_101_blind}
}
\caption{{\bf{\acf{OrthoNS}}} The first row shows an image decomposition with $p=1.01$ from \cite{cohen2020Introducing}. In the second row, \ac{OrthoNS} is applied to the zebra image for $p=1.01$, whereas in the third row, the posterior \ac{OrthoNS} method is used where neither the operator nor the step size are known.}
\label{Fig:ZebraDecompositionDMD_101}
\end{figure}

\subsection{Run time Vs. Image size} \label{subsec:ResRT}
A prominent advantage of this method is the running time. The $p$-decomposition \cite{cohen2020Introducing} requires evaluating the \eqref{eq:pFlow} with a uniform small step size. Then, fractional derivative is calculated pointwise. This involves applying \emph{FFT} and \emph{IFFT} (along the sampling time axis) for every point in the image. In  Fig. \ref{Fig:RVS}, we show the computation time versus the size of the image.
\begin{figure}[phtb!]
\centering 
\includegraphics[trim=0 50 0 0, clip,width=0.5\textwidth]{./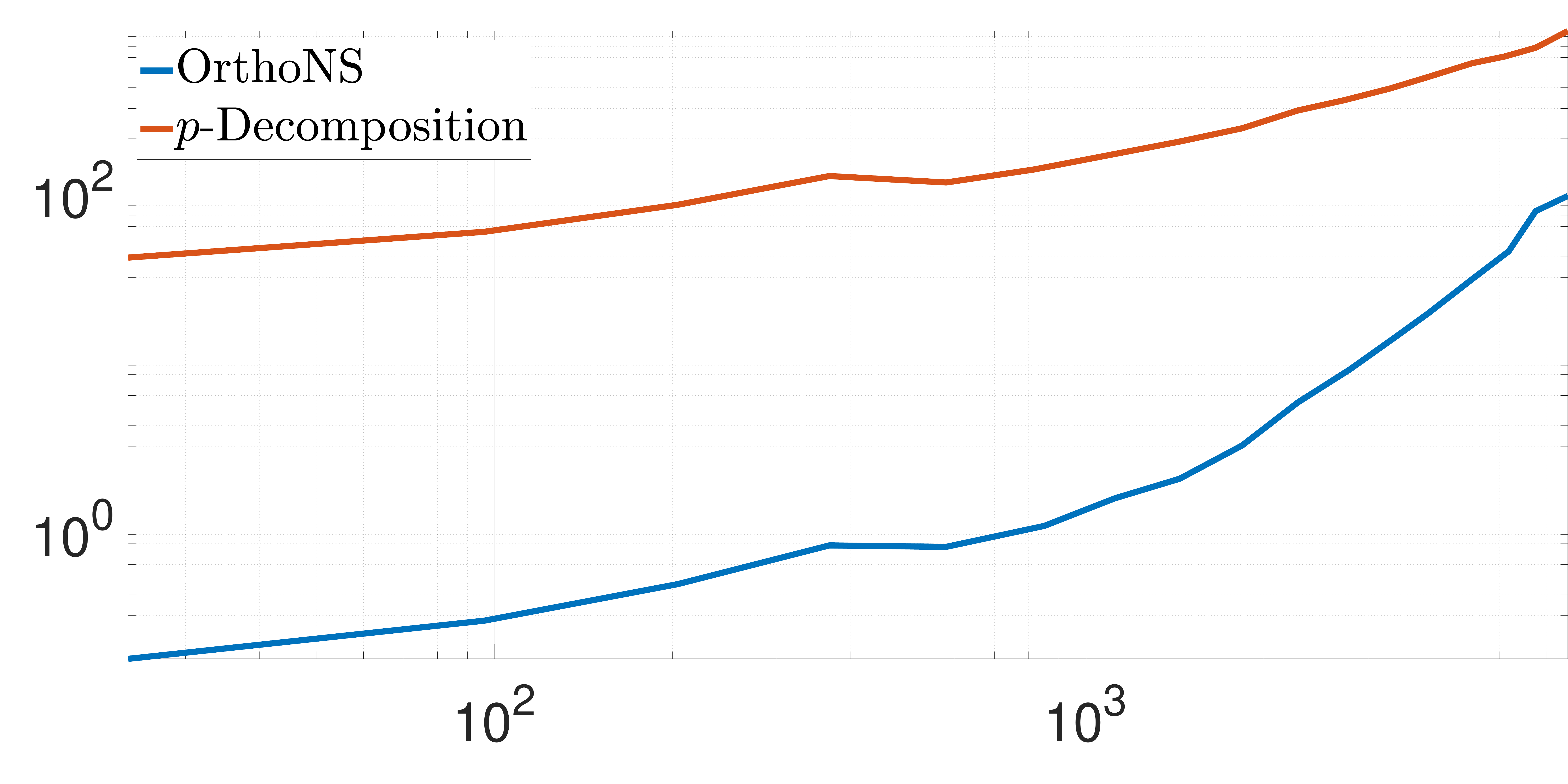}
\caption{Running time Vs. Image size}
\label{Fig:RVS}
\end{figure}
The $X$-axis indicates the size of the image (number of pixels) in log scale and the $Y$-axis indicates the running time taken to compute the decomposition. 
The running time of \ac{OrthoNS} is considerably lower than \cite{cohen2020Introducing} in 1-2  orders of magnitude.

\section{Conclusion and future work}\label{sec:conclusion}
In this work we investigated how to recover the main modes of homogeneous gradient flows through a linear dimensionality reduction algorithm. 
We examined \ac{DMD}, a leading method for this purpose in fluid-dynamics. We used explicit analytic solutions of such flows for cases where the initial condition is an eigenfunction of the nonlinear operator. The analytic solution of \ac{DMD} in these cases clearly shows its inability to express such flows faithfully. 
A significant observation is that \ac{DMD} can recover well homogeneous flows which are of degree one. 
We thus proposed a time re-scaling of the sampling points, such that it mimics the dynamics of 1-homogeneous flows sampled with uniform time steps. It was shown how this adaptation allows to fully recover the dynamics with analytic solutions.

Following these insights, two algorithms were proposed for time re-scaling, also when the original time samples and the operator of the flow are not known. 
Additionally, a different \ac{DMD} optimization was suggested in order to obtain a symmetric \ac{DMD} matrix (for non-oscillating flows).
We have shown that the modes correspond to approximations of nonlinear eigenfunctions (with respect to the operator of the flow). The OrhoNS mode decomposition was proposed. It yields a small set of the main modes of the flow and can be viewed as a linearization of the nonlinear spectral decomposition ($p$-spectra) introduced in \cite{cohen2020Introducing}. We believe this analysis and proposed representation can further advance the understanding of gradient flows and be used in various fields, wherever such flows are relevant.




{\bf{List of Symbols}}
\addcontentsline{toc}{chapter}{List of Symbols}
\begin{longtable}{lp{0.6\textwidth}}
  $P$& A nonlinear homogeneous operator\\
  $f$& An initial condition\\
  $\lambda$ &An eigenvalue of $P$\\
  $a(t)$& A decay profile\\
  $F(\psi,f)$& A fidelity term\\
  $R(\psi)$& A regularization term\\
  $\partial_psi R$& The variational derivative of $R$\\
  $T_{ext}$&The extinction time. The smallest time for which the system gets its steady state\\
  $\nabla$&The gradient of a function\\
  $J_p$&The Dirichlet energy\\
  $\Delta_p$& The $p$-Laplacian operator\\
  $dt,\,dt_k$&A fixed step size, a step size from $k-1$th sample to the $k$th\\
  $f$& Belongs to $\mathbb{R}^M$ (column vector), the initial condition of the dynamical system \\
  $\psi_k$ & The snapshot of the system after $k$th step in $\mathbb{R}^M$ (resulted from sampling or evolving a explicit scheme)\\
  $\Psi_0^{N-1},\Psi_1^N$& Data matrices $[\psi_0,\cdots,\psi_{N-1}],[\psi_1,\cdots,\psi_{N}]$\\
  $U,\Sigma, V$& \acf{SVD} of $\Psi_0^{N-1}$\\
  $U_r, V_r$& Sub-matrices of $U, V$ containing the first $r$ columns\\
  $\Sigma_r$&Sub-matrix of $\Sigma$ containing the most significant $r$ eigenvalues of the \ac{SVD} which are the diagonal of $\Sigma$\\
  $X,Y$&Dimensional reduced matrices of $\Psi_0^{N-1},\Psi_1^N$, respectively\\
  $F$ & The \ac{DMD} matrix ,approximating a linear mapping from $X$ to $Y$ (size $r\times r$)\\
  $w_i$& A column vector, the $i$th right eigenvectors of $F$\\
  $W$& $W=[{\bm{w_1}},\cdots,{\bm{w_r}}]$\\
  $\{\phi_i,\mu_i,\alpha_i\}_{i=1}^r$& Modes, eigenvalues, and coefficients resulted form \acf{DMD}\\
  $D$& $D=diag([\mu_1,\cdots,\mu_r])$\\
  ${\bm{\tilde{\psi}_k}}$&Data reconstruction by \ac{DMD} (discrete time setting)\\
  $A$ & $M\times M$ matrix, approximating a linear mapping from $\Psi_0^{N-1}$ to $\Psi_{1}^N$ \\
  ${\bm{\tilde{\psi(t)}}}$&Data reconstruction by \ac{DMD} (continuous time setting)\\
  $ERR_{DMD}$, $ERR_{Rec}^d$, $ERR_{Rec}^c$&The \ac{DMD}, the (time-discrete) and the (time-continuous) reconstruction errors\\
  $\{\tilde{\mu}\}_{i=1}^{r}$&Eigenvalues in the time continuous setting\\
  $\tilde{dt}_k,\,\tilde{t}_k$&Rescaled step size and time point\\
  $\lambda_\phi,\,\lambda_\mu$& A nonlinear eigenvalue restoration via the mode $\phi$, the eigenvalue $\mu$
  \end{longtable}
\appendix

\section{Finding a symmetric \ac{DMD} matrix}\label{sec:LRUC}
We are looking for linear mapping, $F$, between $X$ and $Y$ when the mapping is symmetric, i.e.
\begin{equation}\label{eq:AppConstDMD}
    \min_{F}\norm{Y-FX}^2_F, \quad s.t.\,\, F=F^T.
\end{equation}
In addition, according to the spectral theorem every symmetric real matrix can be diagonalized. Therefore, we can express the matrix as $F=Q^TDQ$ when $Q$ and $D$ are orthogonal and diagonal matrices. We can rewrite this expression as $F=Q^T\sqrt{D}^T\sqrt{D}Q$. Then, we can reformulate the optimization problem as
\begin{equation}\label{eq:AppConstDMD2}
    \min_{F}\norm{Y-FX}^2_F, \quad s.t.\,\, F=B^TB.
\end{equation}
Note, that $B$ is over the complex field and $^T$ denotes for the transpose 
operator. Embedding the constrain in the optimization expression,  we get
\begin{equation*}
    \min_{B}\norm{Y-B^TBX}^2_F.
\end{equation*}
Using $\norm{Y-B^TBX}^2_F=\Tr{Y-B^TBX}^T\{Y-B^TBX\}$ and the derivatives 
\begin{equation*}
\begin{split}
    \frac{\partial}{\partial B}\Tr{FBG}&=F^TG^T\\ \frac{\partial}{\partial B}\Tr{FB^TG}&=GF,
\end{split}
\end{equation*}
we get that the minimizer, $B$, admits
\begin{equation*}
    B^TBXX^T+XX^TB^TB = XY^T+YX^T.
\end{equation*}
Substituting $B^TB$ with $F$, we get that for the minimizer, $F$, of \eqref{eq:AppConstDMD} the following Sylvester equation holds
\begin{equation}\label{eq:Asylvester}
    FXX^T+XX^TF = XY^T+YX^T.
\end{equation}
The solution for $F$ exists and unique in this case. There are plenty of algorithm to solve this equation (see e.g.
\cite{jameson1968solution}\cite{tongxing1986solution}\cite{bartels1972solution}). In addition, for the specific form of the Sylvester equation \eqref{eq:Asylvester} a farther study was conducted in \cite{la1961stability}\cite{smith1966matrix}\cite{barnett1967analysis}. We use the Matlab implementation (command ``sylvester'') for solving the Sylvester equation, which is based on the algorithm of Hessenberg-Schur method. The implementation is based on the routines SB04QD. 

\section{\acf{SDMD}}\label{sec:SDMDResults}
\begin{figure}[phtb!]
\centering
\captionsetup[subfigure]{justification=centering}
\subfloat[$\lambda = -0.5$]
  {
\includegraphics[width=0.95\textwidth]{./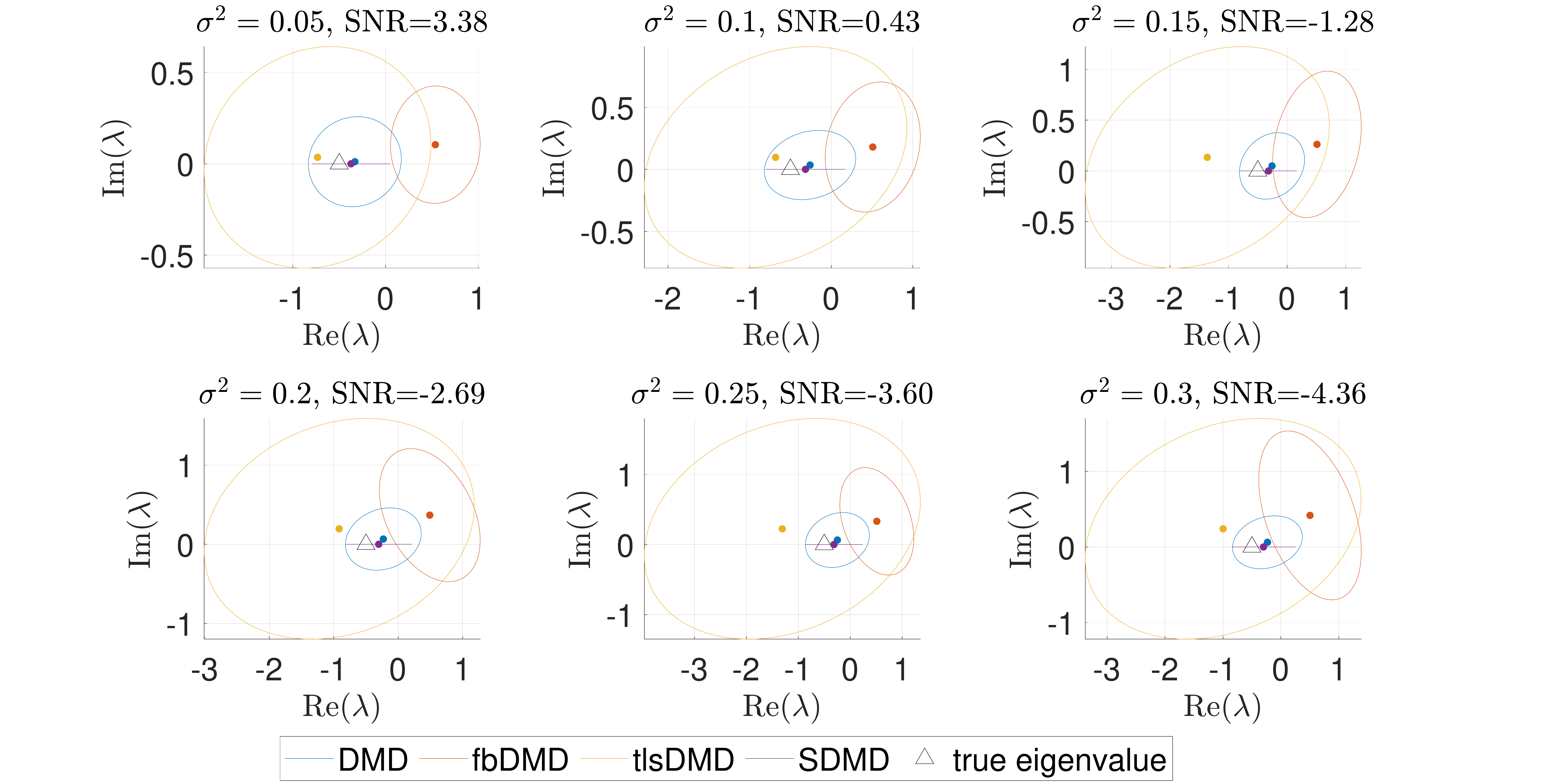}
\label{subfig:sys1_root1}
}\\
\subfloat[$\lambda = 0.7$]
  {
\includegraphics[width=0.95\textwidth]{./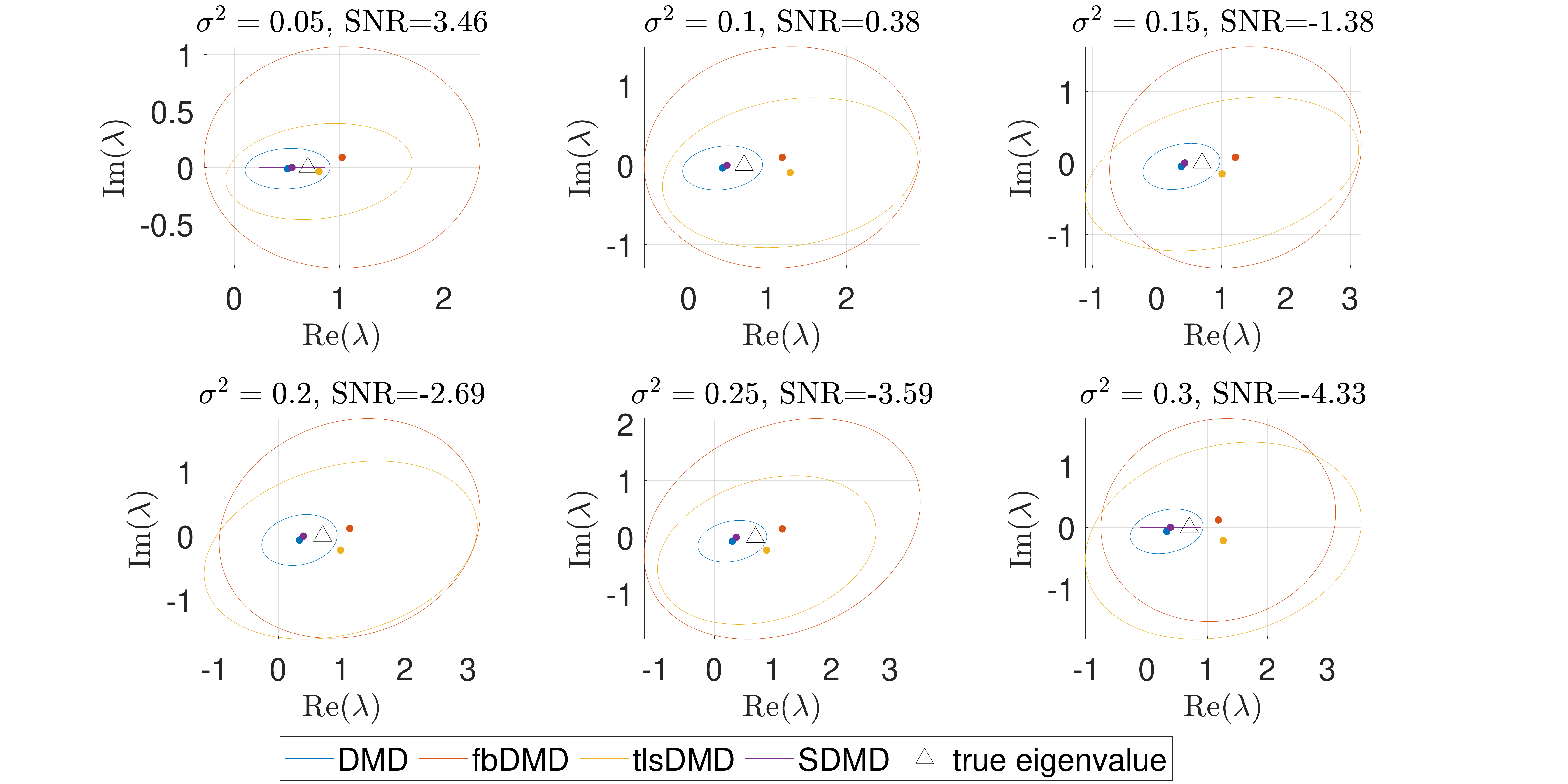}
\label{subfig:sys1_root2}
}
\caption{{\bf{Spectrum Reconstruction.}} We compare \ac{DMD} \cite{schmid2010dynamic}, tls\ac{DMD} \cite{hemati2017biasing}, fb\ac{DMD} \cite{dawson2016characterizing} and \ac{SDMD} based on their approximation for the eigenvalue of system \eqref{eq:linearSys} when various levels of noise are introduced, $-4 \leq SNR \leq 4$.}
\label{Fig:sys1}
\end{figure}

Here, we implement the \ac{SDMD} on a discrete stable linear system 
\begin{equation}\label{eq:linearSys}
    \psi_{k+1}=\begin{bmatrix}0.1&0.6\\0.6&0.1
    \end{bmatrix}\psi_k.
\end{equation}
The eigenvalues are $\lambda_{1,2}=-0.5,0.7$ and the initial condition is a normalized summation of the eigenvectors (namely $[1,0]^T$). We approximate the eigenvalues of this system based on $8$ snapshots in presence of white Gaussian noise. We repeat our experiment $N = 1000$
times and average of each of the methods. In Fig. \ref{Fig:sys1} we showcase the results and plot the ellipses which enclose the region of $95\%$ of the estimates that are closest to the true eigenvalue for each of the techniques (see \cite{dawson2016characterizing}). One can see that for this kind of systems, \ac{DMD} is superior on the tls\ac{DMD} and the fb\ac{DMD}. In particularly, a method that takes into account the system and its inverse is doom to fail for every stable system since the inverse system is not stable. Therefore, whilst the fb\ac{DMD} has good performances when the roots are on the unit cycle (BIBO stability) it fails when the roots are in the unit cycle. 

\section{Proof of Theorem \ref{theo:decayAndConvergence}}\label{sec:proofTheoConv}
\begin{proof}$\,$
\begin{enumerate}
    \item The functional $R(\psi)$ is convex, therefore
    \begin{equation*}
        R(\psi)-R(0)\le -\inp{P(\psi)}{\psi-0}.
    \end{equation*}
    The functional $R$ is zero at the point $0$ (it is assumed to be in its kernel). And thus
    \begin{equation*}
        R(\psi)\le -\inp{P(\psi)}{\psi}.
    \end{equation*}
    Applying the Brezis chain rule \cite{brezis1973ope}, we can write
    \begin{equation*}
        \begin{split}
            \frac{d}{dt}R(\psi)=\inp{-P(\psi)}{\psi_t}
            =\inp{-P( \psi)}{-\frac{\inp{P(\psi)}{\psi}}{\norm{P(\psi)}^2}P(\psi)}
            =\inp{P(\psi)}{\psi}\le -R(\psi).
        \end{split}
    \end{equation*}
    Using the Gr\"onwall's inequality, we can write $R(\psi(t))\le R(\psi(0))\cdot e^{-t}=R(f)\cdot e^{-t}$. Therefore, it converges.
    \item Let the initial condition, $f$, be an eigenfunction of $P$ with a corresponding eigenvalue $\lambda\ne 0$ ($f$ is not trivial). Then, the initial condition is an eigenfunction of the operator $G$ with the corresponding eigenvalue $-1$
    \begin{equation*}
        G(f)=-\frac{\inp{P(f)}{f}}{\norm{P(f)}^2}P(f)=-\frac{\lambda}{\lambda^2}\frac{\inp{f}{f}}{\norm{f}^2}\lambda f=-f.
    \end{equation*}
    In addition, the operator $G(\cdot)$ from Eq. \eqref{eq:NHF} is a one-homogeneous operator. Then, the solution is \cite{cohen2020Introducing} $\psi(t)=f\cdot e^{-t}$.
\end{enumerate}
$\quad$
\end{proof}

\bibliographystyle{unsrt}
\bibliography{smartPeople.bib}

\begin{thebibliography}{10}

\bibitem{ricker1953wavelet}
Norman Ricker.
\newblock Wavelet contraction, wavelet expansion, and the control of seismic
  resolution.
\newblock {\em Geophysics}, 18(4):769--792, 1953.

\bibitem{elad2006image}
Michael Elad and Michal Aharon.
\newblock Image denoising via learned dictionaries and sparse representation.
\newblock In {\em 2006 IEEE Computer Society Conference on Computer Vision and
  Pattern Recognition (CVPR'06)}, volume~1, pages 895--900. IEEE, 2006.

\bibitem{gurevich2008finite}
Shamgar Gurevich, Ronny Hadani, and Nir Sochen.
\newblock The finite harmonic oscillator and its applications to sequences,
  communication, and radar.
\newblock {\em IEEE Transactions on Information Theory}, 54(9):4239--4253,
  2008.

\bibitem{jovanovic2014sparsity}
Mihailo~R Jovanovi{\'c}, Peter~J Schmid, and Joseph~W Nichols.
\newblock Sparsity-promoting dynamic mode decomposition.
\newblock {\em Physics of Fluids}, 26(2):024103, 2014.

\bibitem{ohmichi2017preconditioned}
Yuya Ohmichi.
\newblock Preconditioned dynamic mode decomposition and mode selection
  algorithms for large datasets using incremental proper orthogonal
  decomposition.
\newblock {\em AIP Advances}, 7(7):075318, 2017.

\bibitem{papyan2018theoretical}
Vardan Papyan, Yaniv Romano, Jeremias Sulam, and Michael Elad.
\newblock Theoretical foundations of deep learning via sparse representations:
  A multilayer sparse model and its connection to convolutional neural
  networks.
\newblock {\em IEEE Signal Processing Magazine}, 35(4):72--89, 2018.

\bibitem{ng2002spectral}
Andrew~Y Ng, Michael~I Jordan, and Yair Weiss.
\newblock On spectral clustering: Analysis and an algorithm.
\newblock In {\em Advances in neural information processing systems}, pages
  849--856, 2002.

\bibitem{kaliroff2019self}
Damian Kaliroff and Guy Gilboa.
\newblock Self-supervised unconstrained illumination invariant representation.
\newblock {\em arXiv preprint arXiv:1911.12641}, 2019.

\bibitem{shaham2018spectralnet}
Uri Shaham, Kelly Stanton, Henry Li, Boaz Nadler, Ronen Basri, and Yuval
  Kluger.
\newblock Spectralnet: Spectral clustering using deep neural networks.
\newblock {\em arXiv preprint arXiv:1801.01587}, 2018.

\bibitem{shaham2019singan}
Tamar~Rott Shaham, Tali Dekel, and Tomer Michaeli.
\newblock Singan: Learning a generative model from a single natural image.
\newblock In {\em Proceedings of the IEEE International Conference on Computer
  Vision}, pages 4570--4580, 2019.

\bibitem{burger2016spectral}
Martin Burger, Guy Gilboa, Michael Moeller, Lina Eckardt, and Daniel Cremers.
\newblock Spectral decompositions using one-homogeneous functionals.
\newblock {\em SIAM Journal on Imaging Sciences}, 9(3):1374--1408, 2016.

\bibitem{biton2019optoacoustic}
Shai Biton, Nadav Arbel, Gilad Drozdov, Guy Gilboa, and Amir Rosenthal.
\newblock Optoacoustic model-based inversion using anisotropic adaptive
  total-variation regularization.
\newblock {\em Photoacoustics}, 16:100142, 2019.

\bibitem{schmid2010dynamic}
Peter~J Schmid.
\newblock Dynamic mode decomposition of numerical and experimental data.
\newblock {\em Journal of fluid mechanics}, 656:5--28, 2010.

\bibitem{kuang2017crime}
Da~Kuang, P~Jeffrey Brantingham, and Andrea~L Bertozzi.
\newblock Crime topic modeling.
\newblock {\em Crime Science}, 6(1):12, 2017.

\bibitem{agarwal2019online}
Naman Agarwal, Brian Bullins, Elad Hazan, Sham Kakade, and Karan Singh.
\newblock Online control with adversarial disturbances.
\newblock In {\em International Conference on Machine Learning}, pages
  111--119, 2019.

\bibitem{bottou2018optimization}
L{\'e}on Bottou, Frank~E Curtis, and Jorge Nocedal.
\newblock Optimization methods for large-scale machine learning.
\newblock {\em Siam Review}, 60(2):223--311, 2018.

\bibitem{osher2018laplacian}
Stanley Osher, Bao Wang, Penghang Yin, Xiyang Luo, Farzin Barekat, Minh Pham,
  and Alex Lin.
\newblock Laplacian smoothing gradient descent.
\newblock {\em arXiv preprint arXiv:1806.06317}, 2018.

\bibitem{gradu2020non}
Paula Gradu, John Hallman, and Elad Hazan.
\newblock Non-stochastic control with bandit feedback.
\newblock {\em arXiv preprint arXiv:2008.05523}, 2020.

\bibitem{arora2018towards}
Sanjeev Arora, Elad Hazan, Holden Lee, Karan Singh, Cyril Zhang, and Yi~Zhang.
\newblock Towards provable control for unknown linear dynamical systems, 2018.

\bibitem{askham2018variable}
Travis Askham and J~Nathan Kutz.
\newblock Variable projection methods for an optimized dynamic mode
  decomposition.
\newblock {\em SIAM Journal on Applied Dynamical Systems}, 17(1):380--416,
  2018.

\bibitem{leroux2016dynamic}
Romain Leroux and Laurent Cordier.
\newblock Dynamic mode decomposition for non-uniformly sampled data.
\newblock {\em Experiments in Fluids}, 57(5):94, 2016.

\bibitem{gueniat2015dynamic}
Florimond Gu{\'e}niat, Lionel Mathelin, and Luc~R Pastur.
\newblock A dynamic mode decomposition approach for large and arbitrarily
  sampled systems.
\newblock {\em Physics of Fluids}, 27(2):025113, 2015.

\bibitem{koopman1931hamiltonian}
Bernard~O Koopman.
\newblock Hamiltonian systems and transformation in hilbert space.
\newblock {\em Proceedings of the national academy of sciences of the united
  states of america}, 17(5):315, 1931.

\bibitem{mezic2005spectral}
Igor Mezi{\'c}.
\newblock Spectral properties of dynamical systems, model reduction and
  decompositions.
\newblock {\em Nonlinear Dynamics}, 41(1-3):309--325, 2005.

\bibitem{cohen2020Introducing}
Ido Cohen and Guy Gilboa.
\newblock Introducing the p-laplacian spectra.
\newblock {\em Signal Processing}, 167, 2020.

\bibitem{cohen2019stable}
Ido Cohen, Adi Falik, and Guy Gilboa.
\newblock Stable explicit p-laplacian flows based on nonlinear eigenvalue
  analysis.
\newblock In {\em International Conference on Scale Space and Variational
  Methods in Computer Vision}, pages 315--327. Springer, 2019.

\bibitem{gilboa2013spectral}
Guy Gilboa.
\newblock A spectral approach to total variation.
\newblock In {\em International Conference on Scale Space and Variational
  Methods in Computer Vision}, pages 36--47. Springer, 2013.

\bibitem{gilboa2014total}
Guy Gilboa.
\newblock A total variation spectral framework for scale and texture analysis.
\newblock {\em SIAM journal on Imaging Sciences}, 7(4):1937--1961, 2014.

\bibitem{bungert2019nonlinear}
Leon Bungert, Martin Burger, Antonin Chambolle, and Matteo Novaga.
\newblock Nonlinear spectral decompositions by gradient flows of
  one-homogeneous functionals.
\newblock {\em arXiv preprint arXiv:1901.06979}, 2019.

\bibitem{Katzir2017Thesis}
Oren Katzir.
\newblock On the scale-space of filters and their applications.
\newblock Master's thesis, Technion — Israel Institute of Technology, Haifa
  3200003, March 2017.

\bibitem{gilboa2018nonlinear}
Guy Gilboa.
\newblock {\em Nonlinear Eigenproblems in Image Processing and Computer
  Vision}.
\newblock Springer, 2018.

\bibitem{tikhonov2013numerical}
Andrei~Nikolaevich Tikhonov, AV~Goncharsky, VV~Stepanov, and Anatoly~G Yagola.
\newblock {\em Numerical methods for the solution of ill-posed problems},
  volume 328.
\newblock Springer Science \& Business Media, 2013.

\bibitem{rudin1992nonlinear}
Leonid~I Rudin, Stanley Osher, and Emad Fatemi.
\newblock Nonlinear total variation based noise removal algorithms.
\newblock {\em Physica D: nonlinear phenomena}, 60(1-4):259--268, 1992.

\bibitem{kuijper2007p}
Arjan Kuijper.
\newblock p-laplacian driven image processing.
\newblock In {\em 2007 IEEE International Conference on Image Processing},
  volume~5, pages V--257. IEEE, 2007.

\bibitem{welk2020pde}
Martin Welk and Joachim Weickert.
\newblock {PDE} evolutions for {M-}smoothers in one, two, and three dimensions.
\newblock {\em Journal of Mathematical Imaging and Vision}, pages 1--29, 2020.

\bibitem{strang2019linear}
Gilbert Strang.
\newblock {\em Linear algebra and learning from data}.
\newblock Wellesley-Cambridge Press, 2019.

\bibitem{trefethen1997numerical}
Lloyd~N Trefethen and David Bau~III.
\newblock {\em Numerical linear algebra}, volume~50.
\newblock Siam, 1997.

\bibitem{kutz2016dynamic}
J~Nathan Kutz, Steven~L Brunton, Bingni~W Brunton, and Joshua~L Proctor.
\newblock {\em Dynamic mode decomposition: data-driven modeling of complex
  systems}.
\newblock SIAM, 2016.

\bibitem{hemati2017biasing}
Maziar~S Hemati, Clarence~W Rowley, Eric~A Deem, and Louis~N Cattafesta.
\newblock De-biasing the dynamic mode decomposition for applied koopman
  spectral analysis of noisy datasets.
\newblock {\em Theoretical and Computational Fluid Dynamics}, 31(4):349--368,
  2017.

\bibitem{bagheri2013effects}
Shervin Bagheri.
\newblock Effects of small noise on the dmd/koopman spectrum.
\newblock {\em Bulletin Am. Phys. Soc}, 58(18):H35, 2013.

\bibitem{dawson2016characterizing}
Scott~TM Dawson, Maziar~S Hemati, Matthew~O Williams, and Clarence~W Rowley.
\newblock Characterizing and correcting for the effect of sensor noise in the
  dynamic mode decomposition.
\newblock {\em Experiments in Fluids}, 57(3):42, 2016.

\bibitem{azencot2019consistent}
Omri Azencot, Wotao Yin, and Andrea Bertozzi.
\newblock Consistent dynamic mode decomposition.
\newblock {\em SIAM Journal on Applied Dynamical Systems}, 18(3):1565--1585,
  2019.

\bibitem{nonomura2018dynamic}
Taku Nonomura, Hisaichi Shibata, and Ryoji Takaki.
\newblock Dynamic mode decomposition using a kalman filter for parameter
  estimation.
\newblock {\em AIP Advances}, 8(10):105106, 2018.

\bibitem{nonomura2019extended}
Taku Nonomura, Hisaichi Shibata, and Ryoji Takaki.
\newblock Extended-kalman-filter-based dynamic mode decomposition for
  simultaneous system identification and denoising.
\newblock {\em PloS one}, 14(2), 2019.

\bibitem{williams2015data}
Matthew~O Williams, Ioannis~G Kevrekidis, and Clarence~W Rowley.
\newblock A data--driven approximation of the koopman operator: Extending
  dynamic mode decomposition.
\newblock {\em Journal of Nonlinear Science}, 25(6):1307--1346, 2015.

\bibitem{li2017extended}
Qianxiao Li, Felix Dietrich, Erik~M Bollt, and Ioannis~G Kevrekidis.
\newblock Extended dynamic mode decomposition with dictionary learning: A
  data-driven adaptive spectral decomposition of the koopman operator.
\newblock {\em Chaos: An Interdisciplinary Journal of Nonlinear Science},
  27(10):103111, 2017.

\bibitem{azencot2020forecasting}
Omri Azencot, N~Benjamin Erichson, Vanessa Lin, and Michael~W Mahoney.
\newblock Forecasting sequential data using consistent koopman autoencoders.
\newblock {\em arXiv preprint arXiv:2003.02236}, 2020.

\bibitem{cohen2018energy}
Ido Cohen and Guy Gilboa.
\newblock Energy dissipating flows for solving nonlinear eigenpair problems.
\newblock {\em Journal of Computational Physics}, 375:1138--1158, 2018.

\bibitem{weickert1998anisotropic}
Joachim Weickert.
\newblock {\em Anisotropic diffusion in image processing}, volume~1.
\newblock Teubner Stuttgart, 1998.

\bibitem{book2008nonlinearSpectral}
J{\"u}rgen Appell, Espedito De~Pascale, and Alfonso Vignoli.
\newblock {\em Nonlinear spectral theory}, volume~10.
\newblock Walter de Gruyter, 2008.

\bibitem{gavish2014optimal}
Matan Gavish and David~L Donoho.
\newblock The optimal hard threshold for singular values is $4/\sqrt{3}$.
\newblock {\em IEEE Transactions on Information Theory}, 60(8):5040--5053,
  2014.

\bibitem{jameson1968solution}
Antony Jameson.
\newblock Solution of the equation ax+xb=c by inversion of an m*m or n*n
  matrix.
\newblock {\em SIAM Journal on Applied Mathematics}, 16(5):1020--1023, 1968.

\bibitem{tongxing1986solution}
Lu~Tongxing.
\newblock Solution of the matrix equation ax- xb= c.
\newblock {\em Computing}, 37(4):351--355, 1986.

\bibitem{bartels1972solution}
Richard~H. Bartels and George~W Stewart.
\newblock Solution of the matrix equation ax+ xb= c [f4].
\newblock {\em Communications of the ACM}, 15(9):820--826, 1972.

\bibitem{la1961stability}
Salle La and S~Lefschet.
\newblock Stability by lyapunov's direct method.
\newblock {\em Academic Press, New York}, 1961.

\bibitem{smith1966matrix}
RA~Smith.
\newblock Matrix calculations for liapunov quadratic forms.
\newblock {\em Journal of Differential Equations}, 2(2):208--217, 1966.

\bibitem{barnett1967analysis}
S~Barnett and C~Storey.
\newblock Analysis and synthesis of stability matrices.
\newblock {\em Journal of Differential Equations}, 3(3):414--422, 1967.

\bibitem{brezis1973ope}
Haim Brezis.
\newblock {\em Op\'erateurs maximaux monotones et semi-groupes de contractions
  dans les espaces de Hilbert}, volume~5.
\newblock Elsevier, 1973.

\end{thebibliography}

\end{document}